\documentclass[journal]{IEEEtran}
\usepackage{amsmath}
\usepackage{graphicx}
\usepackage{booktabs}
\usepackage{alltt}
\usepackage{boxedminipage}
\usepackage{epstopdf}
\usepackage{subfigure}
\usepackage{amsmath}
\usepackage{algorithm}
\usepackage{algpseudocode}
\usepackage{cite}
\usepackage{multicol}
\usepackage{multirow}
\usepackage{mathtools}
\usepackage{amssymb}
\usepackage{amsthm}
\usepackage{color}
\usepackage{bm}
\usepackage{indentfirst}
\usepackage{url}
\usepackage{nomencl}
\usepackage{booktabs}
\usepackage{cite}
\usepackage[numbers,sort&compress]{natbib}
\usepackage{amsmath}
\usepackage{graphicx}
\usepackage{subfigure}
\usepackage{amsmath}
\usepackage{algorithm}
\usepackage{algpseudocode}
\usepackage{cite}
\definecolor{mygray}{gray}{.9}
\usepackage{multicol}
\usepackage{multirow}
\usepackage{mathtools}
\usepackage{amssymb}
\usepackage{color}
\usepackage{bm}
\usepackage{indentfirst}
\usepackage{url}
\usepackage{nomencl}
\usepackage{booktabs}
\usepackage{cite}
\usepackage[numbers,sort&compress]{natbib}

\newtheorem{Theorem}{Theorem}
\newtheorem{Lemma}{Lemma}
\newtheorem{Proposition}{Proposition}

\newtheorem{Definition}{Definition}
\newtheorem{Remark}{Remark}
\newtheorem{Assumption}{Assumption}
\allowdisplaybreaks

\begin{document}

\title{Composite Optimization with Coupling Constraints via Penalized Proximal Gradient Method in Partially Asynchronous Networks}

\author{Jianzheng Wang and Guoqiang Hu
\thanks{Jianzheng Wang and Guoqiang Hu are with the School of Electrical and Electronic Engineering, Nanyang Technological University, Singapore 639798 (email:
        {\tt\small wang1151@e.ntu.edu.sg, gqhu@ntu.edu.sg}).}%
}
%

\maketitle

\begin{abstract}
In this paper, we consider a composite optimization problem with linear coupling constraints in a multi-agent network. In this problem, the agents cooperatively optimize a global composite cost function which is the linear sum of individual cost functions composed of smooth and possibly non-smooth components. To solve this problem, we propose an asynchronous penalized proximal gradient (Asyn-PPG) algorithm, a variant of classical proximal gradient method, with the presence of the asynchronous updates of the agents and communication delays in the network. Specifically, we consider a slot-based asynchronous network (SAN), where the whole time domain is split into sequential time slots and each agent is permitted to execute multiple updates during a slot by accessing the historical state information of others. Moreover, we consider a set of global linear constraints and impose violation penalties on the updating algorithms. By the Asyn-PPG algorithm, it shows that a periodic convergence with rate $\mathcal{O}(\frac{1}{K})$ ($K$ is the index of time slots) can be guaranteed with the coefficient of the penalties synchronized at the end of each time slot. The feasibility of the proposed algorithm is verified by solving a consensus based distributed LASSO problem and a social welfare optimization problem in the electricity market respectively.
\end{abstract}

\begin{IEEEkeywords}
Multi-agent network, composite optimization, proximal gradient method, asynchronous, penalty method.
\end{IEEEkeywords}

\IEEEpeerreviewmaketitle
\section{Introduction}

\subsection{Background and Motivation}\label{bnm}
\IEEEPARstart{I}{n} recent years, decentralized optimization problems have been extensively investigated in different research fields, such as distributed control of multi-robot systems, decentralized regularization problems with massive data sets, and economic dispatch problems in power systems \cite{SC20,margellos2017distributed,bai2017distributed}.
In those problems, there are two main categories of how information and agent actions are managed: synchronous and asynchronous. In a synchronous system, certain global clock for agent interactions and activations is established to ensure the correctness of optimization result \cite{arjomandi1983efficiency}. However, in many decentralized systems, there is no such a guarantee. The reasons mainly lie in the following two aspects.
\begin{enumerate}
  \item {\em{Asynchronous Activations:}} In some multi-agent systems, each agent may only be keen on his own updates regardless of the process of others. Such an action pattern may cause an asynchronous computation environment. For example, some agents with higher computation capacity may take more actions during a given time horizon without ``waiting for'' the slow ones \cite{tseng1991rate}.
  \item {\em{Communication Delays:}} In synchronous networks, the agents are assumed to access the up-to-date information without any packet loss. This settlement requires an efficient communication process or reserving a ``zone'' between two successive updates for the data transmission. However, in large-scale decentralized systems, complete synchronization of communications may be costly if the delay is large and computational frequency is high \cite{hannah2018unbounded}.
\end{enumerate}

In addition, we consider a composite optimization problem with coupling constraints, where the objective function is separable and composed of smooth and possibly non-smooth components. The concerned problem structure arises from various fields, such as logistic regression, boosting, and support vector machines \cite{kleinbaum2002logistic,schapire2013boosting,hearst1998support}. Observing that proximal gradient method takes the advantage of some simple-structured composite functions and is usually numerically more stable than the subgradient counterpart \cite{bertsekas2011incremental}, in this paper, we aim to develop a decentralized proximal gradient based algorithm for solving the composite optimization problem in an asynchronous\footnote[1]{In this paper, ``asynchronous'' may be referred to as ``partially asynchronous'' which could be different from ``fully asynchronous'' studied in some works (e.g., \cite{baudet1978asynchronous}) without more clarification for briefness. The former definition may contain some mild assumptions on the asynchrony, e.g., bounded communication delays.} network.


\subsection{Literature Review}\label{881}

Proximal gradient method is related to the proximal minimization algorithm which was studied in early works \cite{martinet1970regularisation,rockafellar1976monotone}. By this method, a broad class of composite optimization problems with simple-structured objective functions can be solved efficiently \cite{bao2012real,chen2012smoothing,banert2019general}.
\cite{aybat2017distributed,hong2017stochastic,li2019decentralized} further studied the decentralized realization of proximal gradient based algorithms.
Decentralized proximal gradient methods dealing with global linear constraints were discussed in \cite{li2017convergence,wang2021distributed,ma2016alternating,jiang2012inexact}. \cite{beck2009fast,chen2012fast,li2015accelerated} present some accelerated versions of proximal gradient method. Different from the existing works, we will show that by our proposed penalty based Asyn-PPG algorithm, a class of composite optimization problems with coupling constraints can be solved asynchronously in the proposed SAN, which enriches the exiting proximal gradient methods and applications.

To deal with the asynchrony of multi-agent networks, existing works usually capture two factors: asynchronous action clocks and unreliable communications \cite{tsitsiklis1986distributed}. In those problems, the decentralized algorithms are built upon stochastic or deterministic settings depending on whether the probability distribution of the asynchronous factors is utilized. Among those works, stochastic optimization based models and algorithms are fruitful \cite{xu2017convergence,notarnicola2016asynchronous,bastianello2020asynchronous,mansoori2019fast,wei20131}. For instance,
in \cite{xu2017convergence}, an asynchronous distributed gradient method was proposed for solving a consensus optimization problem by considering random communications and updates. The authors of \cite{notarnicola2016asynchronous} proposed a randomized dual proximal gradient method, where the agents execute node-based or edge-based asynchronous updates activated by local timers. An asynchronous relaxed ADMM algorithm was proposed in \cite{bastianello2020asynchronous} for solving a distributed optimization problem with asynchronous actions and random communication failures.

All the optimization algorithms in \cite{xu2017convergence,wei20131,notarnicola2016asynchronous,bastianello2020asynchronous,mansoori2019fast} require the probability distribution of asynchronous factors to establish the parameters of algorithms and characterize convergence properties. However, in practical applications, the probability distributions may be difficult to acquire and would cause inaccuracy issues in the result due to the limited historical data \cite{bertsekas2002introduction}. To overcome those drawbacks, some works leveraging on deterministic analysis arose in the recent few decades \cite{bertsekas2011incremental,chazan1969chaotic,bertsekas1989parallel,zhou2018distributed,kibardin1980decomposition,nedich2001distributed,hong2017distributed,kumar2016asynchronous,tian2020achieving,hale2017asynchronous,cannelli2020asynchronous}. For instance, in \cite{chazan1969chaotic}, a chaotic relaxation method was studied for solving a quadratic minimization problem by allowing for both asynchronous actions and communication delays, which can be viewed as a prototype of a class of asynchronous problems. The authors of \cite{bertsekas1989parallel} further investigated the asynchronous updates and communication delays in a routing problem in data networks based on deterministic relaxations. The authors of \cite{zhou2018distributed} proposed an m-PAPG algorithm in asynchronous networks by employing proximal gradient method in machine learning problems with a periodically linear convergence guarantee.

Another line of asynchronous optimizations with deterministic analysis focuses on incremental (sub)gradient algorithms, which can be traced back to \cite{kibardin1980decomposition}. In more recent works, a wider range of asynchronous factors have been explored. For example, in \cite{nedich2001distributed}, a cluster of processors compute the subgradient of their local objective functions triggered by asynchronous action clocks. Then, a master processor acquires all the available but possibly outdated subgradients and updates its state for the subsequent round. The author of \cite{bertsekas2011incremental} proposed an incremental proximal method, which admits a fixed step-size compared with the diminishing step-size of the corresponding subgradient counterpart.
The author of \cite{hong2017distributed} introduced an ADMM based incremental method for asynchronous non-convex optimization problems.
However, the results in \cite{bertsekas2011incremental,chazan1969chaotic,bertsekas1989parallel,zhou2018distributed,kibardin1980decomposition,nedich2001distributed,hong2017distributed,kumar2016asynchronous,tian2020achieving,hale2017asynchronous,cannelli2020asynchronous} are limited to either smooth individual objective functions or uncoupled constraints. In addition, the incremental (sub)gradient methods require certain fusion node to update the full system-wide variables continuously.

More detailed comparisons with the aforementioned works are listed as follows. (i) Different from \cite{xu2017convergence,wei20131,notarnicola2016asynchronous,bastianello2020asynchronous,mansoori2019fast}, in this work, the probability distribution of the asynchronous factors in the network is not required, which overcomes the previously discussed drawbacks of stochastic optimizations. (ii) In terms of the mathematical problem setup, the proposed Asyn-PPG algorithm can handle the non-smoothness of all the individual objective functions, which is not considered in \cite{xu2017convergence,chazan1969chaotic,bertsekas1989parallel,hong2017distributed,kumar2016asynchronous,mansoori2019fast,tian2020achieving,hale2017asynchronous}\footnote[2]{In \cite{hong2017distributed,kumar2016asynchronous}, the objective function of some distributed nodes is assumed to be smooth.}. The algorithms proposed in \cite{bertsekas2011incremental,chazan1969chaotic,kibardin1980decomposition,zhou2018distributed,nedich2001distributed,cannelli2020asynchronous} cannot address coupling constraints, and it is unclear whether the particularly concerned (consensus) constraints in \cite{notarnicola2016asynchronous,xu2017convergence,mansoori2019fast,bastianello2020asynchronous,hong2017distributed,tian2020achieving,bertsekas1989parallel,kumar2016asynchronous} can be extended to general linear equality/inequality constraints. In addition, the algorithms proposed in \cite{xu2017convergence,chazan1969chaotic,tian2020achieving,mansoori2019fast} cannot handle local constraints in their problems.

We hereby summarize the contributions of this work as follows.
\begin{itemize}
  \item A penalty based Asyn-PPG algorithm is proposed for solving a linearly constrained composite optimization problem in a partially asynchronous network. More precisely, we take the local/global constraints, non-smoothness of objective functions, asynchronous updates, and communication delays into account simultaneously with deterministic analysis, which, to the best knowledge of the authors, hasn't been addressed in the existing research works (e.g., \cite{xu2017convergence,wei20131,notarnicola2016asynchronous,bastianello2020asynchronous,mansoori2019fast,bertsekas2011incremental,chazan1969chaotic,kibardin1980decomposition,zhou2018distributed,nedich2001distributed,tian2020achieving,cannelli2020asynchronous,hong2017distributed,bertsekas1989parallel,kumar2016asynchronous,hale2017asynchronous}) and, hence, can adapt to more complicated optimization problems in asynchronous networks with deterministic convergence result.
  \item An SAN model is established by splitting the whole time domain into sequential time slots. In this model, all the agents are allowed to execute multiple updates asynchronously in each slot. Moreover, the agents only access the state of others at the beginning of each slot, which alleviates the intensive message exchanges in the network. In addition, the proposed interaction mechanism allows for communication delays among the agents, which are not considered in  \cite{bastianello2020asynchronous,xu2017convergence,wei20131,notarnicola2016asynchronous},
      and can also relieve the overload of certain central node as discussed in \cite{bertsekas2011incremental,nedich2001distributed,hong2017distributed,kibardin1980decomposition,hale2017asynchronous}.
  \item By the proposed Asyn-PPG algorithm, a periodic convergence rate $\mathcal{O}(\frac{1}{K})$ can be guaranteed with the coefficient of penalties synchronized at the end of each slot. The feasibility of the Asyn-PPG algorithm is verified by solving a distributed least absolute shrinkage and selection operator (LASSO) problem and a social welfare optimization problem in the electricity market.
\end{itemize}


The rest of this paper is organized as follows. {Section \ref{se2}} includes some frequently used notations and definitions in this work. {Section \ref{se3}} formulates the considered optimization problem. Basic definitions and assumptions of the SAN are provided therein. {Section \ref{se4}} presents the proposed Asyn-PPG algorithm and relevant propositions to be used in the subsequent analysis. In {Section \ref{se5}}, the main theorems on the convergence analysis of the Asyn-PPG algorithm are provided. {Section \ref{se6}} verifies the feasibility of the Asyn-PPG algorithm by two motivating applications. {Section \ref{se7}} concludes this paper.

\section{Preliminaries}\label{se2}
In the following, we present some preliminaries on notations, graph theory, and proximal mapping to be used throughout this work.

\subsection{Notations}

Let $\mid \mathcal{A}\mid$ be the size of set $\mathcal{A}$. $\mathbb{N}$ and $\mathbb{N}_+$ denote the non-negative integer space and positive integer space, respectively. $\mathbb{R}_{\succeq \mathbf{u}}^n$ denotes the $n$-dimensional Euclidian space with each element larger than or equal to the corresponding element in $\mathbf{u}$. $\parallel \cdot\parallel_1$ and $\parallel \cdot \parallel$ denote the $l_1$ and $l_2$-norms, respectively. $\langle \cdot, \cdot \rangle$ is an inner product operator. $\otimes$ is the Kronecker product operator. $\mathbf{0}$ and $\mathbf{1}$ denote the column vectors with all elements being 0 and 1, respectively. $\mathbf{I}_n$ and $\mathbf{O}_{m \times n}$ denote the $n$-dimensional identity matrix and $(m \times n)$-dimensional zero matrix, respectively. $\mathbf{relint}\mathcal{A}$ represents the relative interior of set $\mathcal{A}$.

\subsection{Graph Theory}
A multi-agent network can be described by an undirected graph ${\mathcal{G}}= \{{\mathcal{V}},{\mathcal{E}}\}$, which is composed of the set of vertices ${\mathcal{V}} = \{1,2,...,N \}$ and set of edges ${\mathcal{E}} \subseteq \{ (i,j)| i,j \in \mathcal{V} \hbox{ and } i \neq j\}$ with $(i,j) \in \mathcal{E}$ an unordered pair. A graph $\mathcal{G}$ is said connected if there exists at least one path between any two distinct vertices. A graph $\mathcal{G}$ is said fully connected if any two distinct vertices are connected by a unique edge. ${\mathcal{V}}_i =  \{ j | (i,j) \in \mathcal{E}\}$ denotes the set of the neighbours of agent $i$. Let ${\mathbf{L}} \in \mathbb{R}^{N \times N}$ denote the Laplacian matrix of ${\mathcal{G}}$. Let $l_{ij}$ be the element at the cross of the $i$th row and $j$th column of ${\mathbf{L}}$. Thus, $l_{ij} = -1$ if $(i,j) \in {\mathcal{E}}$, $l_{ii} = \mid {\mathcal{V}}_i \mid$, and $l_{ij} = 0$ otherwise, $i,j \in {\mathcal{V}}$ \cite{chung1997spectral}. 

\subsection{Proximal Mapping}
A proximal mapping of a closed, proper, convex function $\zeta: \mathbb{R}^n \rightarrow (-\infty,+\infty]$ is defined by
\begin{align}\label{d1}
\mathrm{prox}^a_{\zeta} (\mathbf{u})= \arg \min \limits_{\mathbf{v} \in \mathbb{R}^n} ( \zeta(\mathbf{v}) + \frac{1}{2a} \parallel \mathbf{v} - \mathbf{u}\parallel^2 ),
\end{align}
with step-size ${a}>0$ \cite{parikh2014proximal}.

\section{Problem Formulation and Network Modeling}\label{se3}

The considered mathematical problem and the proposed network model are presented in this section.

\subsection{The Optimization Problem}
In this paper, we consider a multi-agent network ${\mathcal{G}}=\{{\mathcal{V}},{\mathcal{E}}\}$. $f_i:\mathbb{R}^M \rightarrow (-\infty,+\infty]$ and $h_i: \mathbb{R}^M \rightarrow (-\infty,+\infty]$ are private objective functions of agent $i$, where $f_i$ is smooth and $h_i$ is possibly non-smooth, $i \in \mathcal{V}$. $\mathbf{x}_i= (x_{i1},...,x_{iM})^{\top} \in \mathbb{R}^M$ is the strategy vector of agent $i$, and $\mathbf{x}= (\mathbf{x}^{\top}_1,...,\mathbf{x}^{\top}_{N})^{\top}  \in \mathbb{R}^{M N}$ is the collection of all strategy vectors. A linearly constrained optimization problem of $\mathcal{V}$ can be formulated as
\begin{align}\label{}
\hbox{\textbf{(P1)}}: \quad
\min \limits_{\mathbf{x}}  \quad & {F}(\mathbf{x}) = \sum_{i \in \mathcal{V}} (f_i(\mathbf{x}_i) + h_i(\mathbf{x}_i)) \nonumber \\
\hbox{subject to} \quad  & \mathbf{A} \mathbf{x} = \mathbf{0}, \label{4}
\end{align}
where $\mathbf{A} \in \mathbb{R}^{B \times  NM}$.
For the convenience of the rest discussion, we define $f(\mathbf{x})= \sum_{i\in \mathcal{V}}f_i(\mathbf{x}_i)$, $h(\mathbf{x})= \sum_{i\in \mathcal{V}}h_i(\mathbf{x}_i)$, and $ {F}_i(\mathbf{x}_i) =   f_i(\mathbf{x}_i) + h_i(\mathbf{x}_i)$. Let $\mathbf{A}_i \in \mathbb{R}^{B \times M}$ be the $i$th column sub-block of $\mathbf{A}$, i.e., $\mathbf{A}=(\mathbf{A}_1,...,\mathbf{A}_i,...,\mathbf{A}_{N})$. Let $\mathbf{W} = \mathbf{A}^{\top} \mathbf{A} \in \mathbb{R}^{M N \times M{N}}$ and $\mathbf{W}_{ij}$ be the $(i,j)$th $(M \times M)$-dimensional sub-block of $\mathbf{W}$. Define $\mathbf{W}_i= (\mathbf{W}_{i1},...,\mathbf{W}_{iN}) = \mathbf{A}_i^{\top} \mathbf{A} \in \mathbb{R}^{M \times M{N}}$.

\begin{Assumption}\label{a0}
(Connectivity) $\mathcal{G}$ is undirected and fully connected.\footnote[3]{Strictly speaking, in this work, the requirement on the connectivity of the graph depends on how the individual variables are coupled in (\ref{4}). In some specific problems, $\mathcal{G}$ is not necessarily fully connected (see an example in Section \ref{rm2}).}
\end{Assumption}

\begin{Assumption}\label{a1}
({Convexity}) $f_i$ is proper, $L_i$-Lipschitz continuously differentiable and $\mu_i$-strongly convex, $L_i>0$, $\mu_i > 0$; $h_i$ is proper, convex and possibly non-smooth, $i\in \mathcal{V}$.
\end{Assumption}

The assumptions in Assumption \ref{a1} are widely used in composite optimization problems \cite{florea2020generalized,beck2014fast,notarnicola2016asynchronous}.

\begin{Assumption}\label{a1-1}
 (Constraint Qualification \cite{boyd2004convex}) There exists an $\breve{\mathbf{x}} \in \mathbf{relint} \mathcal{D}$ such that $\mathbf{A} \breve{\mathbf{x}} = \mathbf{0}$, where $\mathcal{D}$ is the domain of $F(\mathbf{x})$.
\end{Assumption}

\begin{Remark}\label{re12}
Problem (P1) defines a prototype of a class of optimization problems. One may consider an optimization problem with local convex constraint $\mathbf{x}_i \in \Omega_i$ and coupling inequality constraint $\mathbf{A} \mathbf{x} + \mathbf{b} \preceq \mathbf{0}$ by introducing slack variables and indicator functions into Problem (P1) \cite{boyd2004convex}, which gives
\begin{align}\label{}
\hbox{\textbf{(P1+)}}: \min \limits_{\mathbf{x}_i, \mathbf{y}, \forall i \in \mathcal{V}}  \quad & \sum_{i \in \mathcal{V}} (f_i(\mathbf{x}_i) + h_i(\mathbf{x}_i)  + \mathbb{I}_{\Omega_i} (\mathbf{x}_{i}))  + \mathbb{I}_{\mathbb{R}^B_{\succeq \mathbf{b}}} (\mathbf{y}) \nonumber \\
\hbox{subject to} \quad & \mathbf{A} \mathbf{x} + \mathbf{y} = \mathbf{0},
\end{align}
where $\Omega_i \subseteq \mathbb{R}^{M}$ is non-empty, convex and closed, $\mathbf{y} \in \mathbb{R}^B$ is a slack variable, and
\begin{align}\label{}
& \mathbb{I}_{\Omega_i} (\mathbf{x}_{i}) = \left\{
\begin{array}{cc}
  0 & \mathbf{x}_{i} \in \Omega_i, \\
  + \infty &  \hbox{otherwise,}
\end{array}  \right.   \\
& \mathbb{I}_{\mathbb{R}^B_{\succeq \mathbf{b}}} (\mathbf{y}) = \left\{
\begin{array}{cc}
  0 & \mathbf{y} \in \mathbb{R}^B_{\succeq \mathbf{b}}, \\
  + \infty & \hbox{otherwise.}
\end{array}  \right.
\end{align}
To realize decentralized computations, $\mathbf{y}$ can be decomposed and assigned to each of the agents. Since $\mathbb{I}_{\Omega_i}$ and $\mathbb{I}_{\mathbb{R}^B_{\succeq \mathbf{b}}}$ are proper and convex, the structure of Problem (P1+) is consistent with that of Problem (P1).
\end{Remark}


\subsection{Characterization of Optimal Solution}

By recalling Problem (P1), we define Lagrangian function
\begin{align}
{\mathcal{L}}({\mathbf{x}}, \bm{\lambda})=  & {F}(\mathbf{x})  + \langle \bm{\lambda}, \mathbf{A}\mathbf{x} \rangle,
\end{align}
where $\bm{\lambda} \in \mathbb{R}^{B}$ is the Lagrangian multiplier vector. Let $\mathcal{X}$ be the set of the saddle points of ${\mathcal{L}}({\mathbf{x}}, \bm{\lambda})$. Then, any saddle point $({\mathbf{x}}^*,{\bm{\lambda}}^*) \in \mathcal{X}$ can be characterized by \cite{boyd2004convex}
\begin{align}\label{5}
({\mathbf{x}}^*,{\bm{\lambda}}^*) = \arg \max_{\bm{\lambda}} \min_{\mathbf{{x}}} {\mathcal{L}}({\mathbf{x}},\bm{\lambda}),
\end{align}
where ${\mathbf{x}}^*=((\mathbf{x}^*_1)^{\top},...,(\mathbf{x}^*_{N})^{\top})^{\top}$ and $\bm{\lambda}^* = (\lambda^*_1,...,\lambda^*_B)^{\top}$. Then, $\forall \mathbf{x} \in \mathbb{R}^{M{N}}$, we have
\begin{align}\label{}
  & {F}(\mathbf{x}) + \langle \bm{\lambda}^*, \mathbf{A}\mathbf{x}\rangle  - {F}(\mathbf{x}^*) - \langle \bm{\lambda}^*, \mathbf{A}\mathbf{x}^* \rangle  \geq 0. \nonumber
\end{align}
With the fact $ \mathbf{A}\mathbf{x}^*  = \mathbf{0}$, we can obtain
\begin{align}\label{sd1}
  & {F}(\mathbf{x}) + \langle \bm{\lambda}^*, \mathbf{A}\mathbf{x}\rangle  - {F}(\mathbf{x}^*) \geq 0.
\end{align}

\subsection{Slot-based Asynchronous Network}

Regarding the asynchrony issues outlined in Section \ref{bnm}, we propose an SAN model which consists of the following two key features.
\begin{enumerate}
  \item  The whole time domain is split into sequential time slots and the agents are permitted to execute multiple updates in each slot. There is no restriction on which time instant should be taken, which enables the agents to act asynchronously.
  \item All the agents can access the information of others in the previous slot at the beginning of the current slot, but the accessed state information may not be the latest depending on how large the communication delay of the network is.
\end{enumerate}
For practical implementation, the proposed SAN model is promising to be applied in some time-slot based problems, such as bidding and auctions in the electricity market and task scheduling problems in multi-processor systems \cite{david2000strategic,andersson2010implementing}.

The detailed mathematical descriptions of SAN are presented as follows. We let $\mathcal{T}= \{0,1,2,...\}$ be the collection of the whole discrete-time instants and $\mathcal{M}= \{t_m \}_{m\in \mathbb{N}} \subseteq \mathcal{T}$ be the sequence of the boundary of successive time slots. $\mathcal{T}_i \subseteq \mathcal{T}$ is the action clock of agent $i$. Slot $m$ is defined as the time interval $[t_m,t_{m+1})$.

\begin{Assumption}\label{a4-1}
(Uniform Slot Width) The width of slots is uniformly set as $H$, i.e., $t_{m+1} - t_{m} = H$, $H \in \mathbb{N}_+$, $m\in \mathbb{N}$.
\end{Assumption}

\begin{Assumption}\label{a4}
(Frequent Update) Each agent performs at least one update within $[t_m,t_{m+1})$, i.e., $\mathcal{T}_i \cap [t_m,t_{m+1}) \neq \emptyset$, $\forall i \in \mathcal{V}$, $m \in \mathbb{N}$.
\end{Assumption}
The update frequency of agent $i$ in slot $m$ is defined by $P_{i,m}$, i.e., $P_{i,m} = \mid \mathcal{T}_i \cap [t_m,t_{m+1}) \mid \in [1,H]$. Define $\mathcal{P}^m_i=  \{1,2,...,P_{i,m} \}$, $i\in \mathcal{V}$, $m \in \mathbb{N}$. Let $t_m^{(n)} \in \mathcal{T}$ denote the instant of the $n$th update in slot $m$. For the mathematical derivation purpose, we let
\begin{align}
 & t_{m}^{(P_{i,m}+1)}  = t_{m+1}^{(1)}, \label{pan2} \\
 & t_{m+1}^{(0)}  = t_{m}^{(P_{i,m})}. \label{pan3}
\end{align}
(\ref{pan2}) and (\ref{pan3}) are the direct extensions of the action indexes between two sequential slots. That is, the $1$st action instant in slot $m+1$ is equivalent to the $(P_{i,m}+1)$th action instant in slot $m$; the $0$th action instant in slot $m+1$ is equivalent to the $P_{i,m}$th action instant in slot $m$.

\begin{Proposition}\label{pp10}
In the proposed SAN, $\forall i \in \mathcal{V}$, $m \in \mathbb{N}$, we have the following inequality:
\begin{align}
 & t_{m}^{(P_{i,m})} \leq t_{m+1}-1 < t_{m+1} \leq t_{m+1}^{(1)}. \label{pan1}
\end{align}
\end{Proposition}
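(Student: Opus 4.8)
The plan is to verify the four-term chain (\ref{pan1}) by isolating its three constituent inequalities, namely $t_m^{(P_{i,m})} \le t_{m+1}-1$, $t_{m+1}-1 < t_{m+1}$, and $t_{m+1} \le t_{m+1}^{(1)}$, and reading each one off from the definition of the update instants $t_m^{(n)}$ together with the slot structure of the SAN. The only genuinely substantive point, as explained below, is the integrality step that turns an open-interval membership into the sharp bound $\le t_{m+1}-1$.

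For the first inequality I would argue as follows. Assumption \ref{a4} applied to slot $m$ gives $\mathcal{T}_i \cap [t_m,t_{m+1}) \ne \emptyset$, so $P_{i,m}\ge 1$ and the last update instant $t_m^{(P_{i,m})}$ of agent $i$ in slot $m$ is well defined and belongs to $\mathcal{T}_i \cap [t_m,t_{m+1})$; in particular $t_m^{(P_{i,m})} < t_{m+1}$. Since $\mathcal{T}\subseteq\mathbb{N}$ and $t_{m+1}\in\mathcal{M}\subseteq\mathcal{T}$ are both integers, this strict inequality sharpens to $t_m^{(P_{i,m})} \le t_{m+1}-1$. The middle inequality $t_{m+1}-1 < t_{m+1}$ is immediate; Assumption \ref{a4-1} also makes it meaningful, since $t_{m+1}-1 = t_m + H - 1 \ge t_m$ shows that $t_{m+1}-1$ is itself a legitimate instant lying in slot $m$.

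For the last inequality I would invoke Assumption \ref{a4} at slot $m+1$: then $\mathcal{T}_i \cap [t_{m+1},t_{m+2}) \ne \emptyset$, so the first update instant $t_{m+1}^{(1)}$ of agent $i$ in slot $m+1$ exists and lies in $[t_{m+1},t_{m+2})$, which yields $t_{m+1} \le t_{m+1}^{(1)}$. Concatenating the three parts gives (\ref{pan1}). I would also remark that this ordering is consistent with the index conventions (\ref{pan2})--(\ref{pan3}), under which $t_m^{(P_{i,m}+1)}$ is identified with $t_{m+1}^{(1)}$ and $t_{m+1}^{(0)}$ with $t_m^{(P_{i,m})}$.

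The main (and essentially the only) thing to be careful about is that the passage from $t_m^{(P_{i,m})} < t_{m+1}$ to $t_m^{(P_{i,m})} \le t_{m+1}-1$ uses the discreteness of the time domain, i.e. $\mathcal{T}\subseteq\mathbb{N}$; and that Frequent Update must be applied to both slot $m$ and slot $m+1$ so that every instant named in (\ref{pan1}) is guaranteed to exist. No convexity, graph-theoretic, or proximal-mapping machinery is needed.
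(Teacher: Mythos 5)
Your proposal is correct and follows essentially the same route as the paper, which simply observes that $t_m^{(P_{i,m})}$ and $t_{m+1}^{(1)}$ are the last update instant in $[t_m,t_{m+1})$ and the first in $[t_{m+1},t_{m+2})$ respectively and declares the chain straightforward. You merely make explicit the integrality step and the appeal to Assumption \ref{a4} in both slots, which the paper leaves implicit.
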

\begin{proof}
Note that $t_{m}^{(P_{i,m})}$ and $t_{m+1}^{(1)}$ are the last update instant in $[t_m,t_{m+1})$ and the first update instant in $[t_{m+1},t_{m+2})$ of agent $i$, respectively. Therefore, the validation of (\ref{pan1}) is straightforward.
\end{proof}

\begin{Assumption}\label{as1}
(Information Exchange) Each agent always knows the latest information of itself, but the state information of others can only be accessed at the beginning of each slot, i.e., $t_m$, $\forall m \in \mathbb{N}$.
\end{Assumption}

Assumption \ref{as1} enables the agents to only communicate at the instants in $\mathcal{M}$, which can relieve the intensive information exchanges in the network. However, due to communication delays, in slot $m$, certain agent $i$ may not access the latest information of agent $j$ at time $t_m$, i.e., $\mathbf{x}_j(t_m)$, $j \in \mathcal{V} \setminus \{i\}$, but a possibly delayed version $\mathbf{x}_j(\tau(t_m))$ with $\tau(t_m) \leq t_m$, $\tau(t_m) \in \mathcal{T}$. $\mathbf{x}_j(\tau(t_m)) \neq \mathbf{x}_j(t_m)$ means that agent $j$ performs update(s) within $[\tau(t_m),t_m)$. Therefore, the full state information available at instant $t_{m}$ may not be $\mathbf{x}(t_m)$ but a delayed version $\mathbf{x}^{\mathrm{d}}(t_m)= ((\mathbf{x}_1^{\mathrm{d}})^{\top}(t_m),...,(\mathbf{x}_{N}^{\mathrm{d}})^{\top}(t_m))^{\top} = (\mathbf{x}^{\top}_1(\tau (t_m)),..., \mathbf{x}^{\top}_{N}(\tau (t_m)))^{\top} \in \mathbb{R}^{M{N}}$.\footnote[4]{In slot $m$, the time instant of the historical state, i.e., $\tau(t_{m})$, is identical, which means the communication delay is uniform for the all the agents (as discussed in \cite{wang2013consensus}) in certain slot and can be varying in different slots.}
\begin{Assumption}\label{a5}
({Bounded Delay}) The communication delays in the network are upper bounded by $D \in \mathbb{N}_+$ with $D \leq H$, i.e., $t_m- \tau(t_m) \leq D$, $\forall m \in \mathbb{N}$.
\end{Assumption}


In slot $m$, the historical state of agent $i$ can be alternatively defined by $ \mathbf{x}_i(t^{(n_{i,m})}_m) =  \mathbf{x}_i^{\mathrm{d}}(t_{m+1})$, where $t^{(n_{i,m})}_m$ is the largest integer no greater than $\tau(t_{m+1})$ in set $\mathcal{T}_i$, and $n_{i,m} \in \mathbb{N}_+$ is the index of the update. Then, the number of updates within $[t^{(n_{i,m})}_m, t^{(P_{i,m})}_m]$ should be no greater than the number of instants in $[\tau(t_{m+1}), t_{m+1})$, i.e.,
\begin{equation}\label{6}
P_{i,m} - n_{i,m} \leq t_{m+1} -1 - \tau(t_{m+1}) \leq D -1.
\end{equation}
The relationship among $\mathcal{T}$, $\mathcal{T}_i$ and delay in slot $m$ is illustrated in Fig. \ref{1234}.
\begin{figure}[htbp]
  \centering
  \includegraphics[width=7cm]{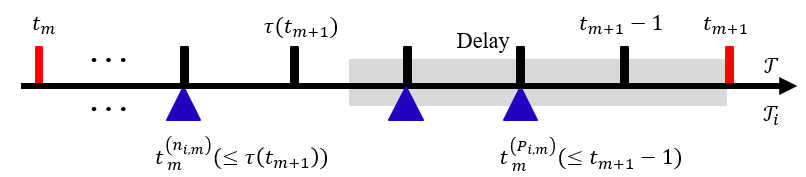}\\
  \caption{An illustration of the relationship among $\mathcal{T}$, $\mathcal{T}_i$ and delay in slot $m$. In this example, $P_{i,m} - n_{i,m}=2$ and $t_{m+1} - \tau(t_{m+1})=4$, which satisfies (\ref{6}).}\label{1234}
\end{figure}

\section{Asynchronous Penalized Proximal Gradient Algorithm}\label{se4}

Based on the SAN model, the Asyn-PPG algorithm is designed in this section.

Let $\{\alpha_i(t^{(n)}_m)_{>0}\}_{n \in \mathcal{P}^m_i}$ and $\{\eta_i(t^{(n)}_m)_{>0}\}_{n \in \mathcal{P}^m_i}$ be two sequences assigned to agent $i$ in slot $m$. In addition, we introduce a sequence $\{\alpha_i(t_{m+1}-1) \}_{m \in \mathbb{N}}$ and a scalar $\beta > 0$, where $\alpha_i(t_{m+1}-1)$ is the value of $\alpha_i$ at time instant $t_{m+1}-1$. Then, by considering the overall action/non-action instants, the updating law of the agents is given in Algorithm \ref{a1x}.\footnote[5]{We assume that the Asyn-PPG algorithm starts from slot $1$ by viewing the states in slot $0$ as historical data.}
\begin{algorithm}
\caption{Asynchronous Penalized Proximal Gradient Algorithm}\label{a1x}
\begin{algorithmic}[1]
\State Initialize $\mathbf{x}_i(t^{(1)}_1)$, $\mathbf{x}^{\mathrm{d}}(t_1)$, $\forall i \in \mathcal{V}$.
\State For all $t  \in  \mathcal{T}, i \in \mathcal{V}, n \in \mathcal{P}^m_i, m \in \mathbb{N}_+$,
\State if $t \in  \mathcal{T}_i\cap [t_m,t_{m+1})$, then
\State $\quad t_m^{(n)} \leftarrow t$,
\State $\quad$update parameters: $\alpha_i(t^{(n)}_m)$, $\alpha_i(t_{m+1}-1)$, $\eta_i(t^{(n)}_m)$,
\State $\quad$update state:
\begin{align}\label{}
        & \mathbf{x}_i(t^{(n)}_{m}+1) =  \mathrm{prox}_{h_i}^{\eta_i(t^{(n)}_m)} (\mathbf{x}_i(t^{(n)}_m) - \eta_i(t^{(n)}_m) \nonumber \\
        & \quad \quad \cdot (\nabla f_i(\mathbf{x}_i(t^{(n)}_m)) + \frac{\beta \mathbf{W}_i}{\alpha_i (t_{m+1}-1)}\mathbf{x}^{\mathrm{d}}(t_m))); \nonumber
\end{align}
\State if $ t \in [t_m,t_{m+1}) \hbox{ \& }t \notin  \mathcal{T}_i$, then
\State $\quad$ $\mathbf{x}_i(t+1) = \mathbf{x}_i(t)$.
\State Stop under certain convergence criterion.
\end{algorithmic}
\end{algorithm}
Note that $\mathbf{W}_i\mathbf{x}^{\mathrm{d}}(t_m) = \mathbf{A}^{\top}_i \mathbf{A} \mathbf{x}^{\mathrm{d}}(t_m)$. Hence, $\frac{\beta \mathbf{W}_i\mathbf{x}^{\mathrm{d}}(t_m)}{\alpha_i (t_{m+1}-1)}$ can be viewed as a violation penalty of a ``delayed'' global constraint $\mathbf{A} \mathbf{x}^{\mathrm{d}} (t_m)= \mathbf{0}$, which is a variant of the penalty method studied in \cite{li2017convergence}.


Algorithm \ref{a1x} provides a basic framework for solving the proposed optimization problem in the SAN. An illustrative state updating process by Asyn-PPG algorithm in a 3-agent SAN is shown in Fig. \ref{p3}.
\begin{figure}[htbp]
  \centering
  \includegraphics[height=7cm,width=10cm]{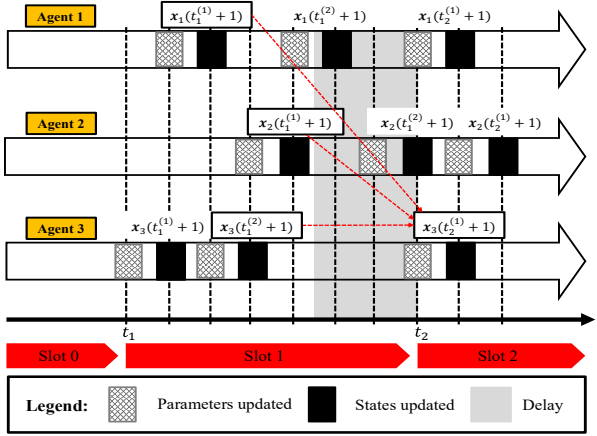}\\
  \caption{An illustrative updating process of the Asyn-PPG algorithm in a 3-agent SAN. In this example, the state of the agents evolves from $(\mathbf{x}_1(t^{(1)}_1),\mathbf{x}_2(t^{(1)}_1),\mathbf{x}_3(t^{(1)}_1))$ to $(\mathbf{x}_1(t^{(1)}_2 +1 ),\mathbf{x}_2(t^{(1)}_2+1),\mathbf{x}_3(t^{(1)}_2+1))$ with the historical state provided at the beginning of each time slot. This updating process is parallel but asynchronous due to the arbitrarily determined action instants of the agents. Specifically, to compute $\mathbf{x}_3(t^{(1)}_2+1)$, the state information available for agent 3 in slot 2 is $(\mathbf{x}_1(t^{(1)}_1+1),\mathbf{x}_2(t^{(1)}_1+1),\mathbf{x}_3(t^{(2)}_1+1))$ rather than $(\mathbf{x}_1(t^{(2)}_1+1),\mathbf{x}_2(t^{(2)}_1+1),\mathbf{x}_3(t^{(2)}_1+1))$ since the action instants of $\mathbf{x}_1(t^{(1)}_1+1) \rightarrow \mathbf{x}_1(t^{(2)}_1+1)$ and $\mathbf{x}_2(t^{(1)}_1+1)  \rightarrow \mathbf{x}_2(t^{(2)}_1+1)$ are too close to $t_2$, and therefore, $\mathbf{x}_1(t^{(2)}_1+1)$ and $\mathbf{x}_2(t^{(2)}_1+1)$ can not reach agent 3 by $t_2$ due to the communication delays in the network.}\label{p3}
\end{figure}

Based on Asyn-PPG algorithm, we have the following two propositions.
\begin{Proposition}\label{pp1}
({Equivalent Representation A}) By Algorithm \ref{a1x}, $\forall i \in \mathcal{V}$, $n \in \mathcal{P}^m_i$, $m \in \mathbb{N}$, we have
\begin{subequations}
\begin{align}\label{}
& \mathbf{x}_i (t^{(1)}_{m+1}) =  \mathbf{x}_i (t^{(P_{i,m}+1)}_{m}), \label{e1} \\
 & \alpha_i(t^{(P_{i,m})}_{m})=\alpha_i(t^{(0)}_{m+1}), \label{e6} \\
  & \eta_i(t^{(P_{i,m})}_{m})= \eta_i(t^{(0)}_{m+1}), \label{e7} \\
& \mathbf{x}_i (t^{(n)}_{m}+1) =  \mathbf{x}_i (t^{(n+1)}_{m}), \label{e0} \\
& \mathbf{x}_i(t_{m+1}) = \mathbf{x}_i (t^{(P_{i,m}+1)}_{m}), \label{e2} \\
&  \alpha_i(t_{m}^{(P_{i,m})}) =  \alpha_i(t_{m}^{(P_{i,m}+1)} -1), \label{e4} \\
&  \alpha_i(t_{m+1}-1)=  \alpha_i(t_{m}^{(P_{i,m}+1)} -1), \label{e4+1} \\
 & \eta_i(t_{m}^{(P_{i,m})}) = \eta_i(t_{m}^{(P_{i,m}+1)} -1), \label{e5} \\
  & \eta_i(t_{m+1}-1) = \eta_i(t_{m}^{(P_{i,m}+1)} -1). \label{e5+1}
\end{align}
\end{subequations}
\end{Proposition}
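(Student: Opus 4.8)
The plan is to obtain each of the nine equalities (\ref{e1})--(\ref{e5+1}) as a bookkeeping consequence of three ingredients. The first is the pair of index identifications (\ref{pan2})--(\ref{pan3}), that is, $t^{(P_{i,m}+1)}_m=t^{(1)}_{m+1}$ and $t^{(0)}_{m+1}=t^{(P_{i,m})}_m$. The second is Proposition \ref{pp10}, which locates the slot boundary through $t^{(P_{i,m})}_m\le t_{m+1}-1<t_{m+1}\le t^{(1)}_{m+1}$, so that both $t_{m+1}-1$ and $t_{m+1}$ lie in the window between agent $i$'s last update in slot $m$ and its first update in slot $m+1$. The third is the structure of Algorithm \ref{a1x}: the state $\mathbf{x}_i(\cdot)$ is held constant on any interval containing no instant of $\mathcal{T}_i$ (lines 7--8), while the parameters $\alpha_i(\cdot)$ and $\eta_i(\cdot)$ are reassigned only at instants of $\mathcal{T}_i$ (line 5) and retain their last assigned value elsewhere.

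First I would clear the purely notational identities. Equality (\ref{e1}) is (\ref{pan2}) read inside $\mathbf{x}_i(\cdot)$, and (\ref{e6}), (\ref{e7}) are (\ref{pan3}) read inside $\alpha_i(\cdot)$ and $\eta_i(\cdot)$ respectively. For (\ref{e0}), the instants $t^{(n)}_m$ and $t^{(n+1)}_m$ are consecutive updates of agent $i$ (with $t^{(P_{i,m}+1)}_m=t^{(1)}_{m+1}$ by (\ref{pan2}) when $n=P_{i,m}$), so the interval $[t^{(n)}_m+1,\,t^{(n+1)}_m)$ contains no instant of $\mathcal{T}_i$ and the state is frozen there, whence $\mathbf{x}_i(t^{(n)}_m+1)=\mathbf{x}_i(t^{(n+1)}_m)$.

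Next, Proposition \ref{pp10} places $t_{m+1}$ inside the interval $[t^{(P_{i,m})}_m+1,\,t^{(1)}_{m+1})$, which is free of instants of $\mathcal{T}_i$; hence $\mathbf{x}_i(t_{m+1})=\mathbf{x}_i(t^{(1)}_{m+1})=\mathbf{x}_i(t^{(P_{i,m}+1)}_m)$, which is (\ref{e2}). The parameter identities (\ref{e4}), (\ref{e4+1}), (\ref{e5}), (\ref{e5+1}) follow from the same observation applied to $\alpha_i$ and $\eta_i$: no update of agent $i$ occurs on $[t^{(P_{i,m})}_m+1,\,t^{(P_{i,m}+1)}_m-1]$, so on that window $\alpha_i$ is constant and equal to $\alpha_i(t^{(P_{i,m})}_m)$ while $\eta_i$ is constant and equal to $\eta_i(t^{(P_{i,m})}_m)$; since the window contains both $t^{(P_{i,m}+1)}_m-1$ and, by Proposition \ref{pp10}, $t_{m+1}-1$, the four equalities drop out (and combining (\ref{e4}) with (\ref{e4+1}) also recovers the natural relation $\alpha_i(t_{m+1}-1)=\alpha_i(t^{(P_{i,m})}_m)$, and similarly for $\eta_i$).

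The only point needing care, and the nearest thing to an obstacle here, is to state explicitly the convention for the values of $\alpha_i$ and $\eta_i$ at instants outside $\mathcal{T}_i$, so that ``constant between updates'' is a legitimate claim; once line 5 of Algorithm \ref{a1x} is read as ``the value assigned at agent $i$'s most recent past action instant persists until the next one,'' every one of (\ref{e1})--(\ref{e5+1}) reduces to a substitution driven by (\ref{pan2})--(\ref{pan3}) together with the ordering in Proposition \ref{pp10}. I would present the argument as a short enumerated list, one line per identity, in the order (\ref{e1}), (\ref{e6}), (\ref{e7}), (\ref{e0}), (\ref{e2}), (\ref{e4}), (\ref{e4+1}), (\ref{e5}), (\ref{e5+1}).
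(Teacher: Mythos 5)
Your proposal is correct and follows essentially the same route as the paper's own proof: the notational identities (\ref{e1}), (\ref{e6}), (\ref{e7}) from (\ref{pan2})--(\ref{pan3}), and the remaining six from the fact that states and parameters are held constant between consecutive action instants, combined with the location of $t_{m+1}$ and $t_{m+1}-1$ given by Proposition \ref{pp10}. The only cosmetic difference is that the paper uses closed intervals (e.g., $t_{m+1}\in[t^{(P_{i,m})}_m+1,\,t^{(P_{i,m}+1)}_m]$ and $t_{m+1}-1\in[t^{(P_{i,m})}_m,\,t^{(P_{i,m}+1)}_m-1]$), which absorbs the boundary cases your half-open/shifted windows leave to a trivial direct check.
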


\begin{proof}
See Appendix \ref{pp1p}.
\end{proof}

\begin{Proposition}\label{la2}
By Algorithm \ref{a1x}, $\forall m \in \mathbb{N}$, we have
\begin{align}
 & \parallel \mathbf{x} (t_{m+1}) -  \mathbf{x}^{\mathrm{d}}(t_{m+1}) \parallel^2 \nonumber \\
  & \leq  \sum_{i \in \mathcal{V}}\sum_{n = 1}^{P_{i,m}} D \parallel \mathbf{x}_i(t^{(n+1)}_{m}) - \mathbf{x}_i(t^{(n)}_{m}) \parallel^2, \label{l1}\\
  & \parallel \mathbf{x} (t_{m+1}) -  \mathbf{x}(t_m) \parallel^2 \nonumber \\
  & \leq  \sum_{i \in \mathcal{V}}\sum_{n = 1}^{P_{i,m}} H \parallel \mathbf{x}_i(t^{(n+1)}_{m}) - \mathbf{x}_i(t^{(n)}_{m}) \parallel^2. \label{l2}
\end{align}
\end{Proposition}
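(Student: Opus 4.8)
The plan is to prove both bounds by the same two-step recipe applied block-by-block over the agents: express each block difference appearing on the left-hand side as a telescoping sum of consecutive within-slot increments $\mathbf{x}_i(t^{(n+1)}_m)-\mathbf{x}_i(t^{(n)}_m)$, and then control the squared norm of that sum by the elementary inequality $\|\sum_{j=1}^{k}\mathbf{v}_j\|^2\le k\sum_{j=1}^{k}\|\mathbf{v}_j\|^2$, where the number of increments $k$ is bounded by $D$ in the first case and by $H$ in the second.

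For (\ref{l1}), I would first read off the blocks: by (\ref{e2}) the $i$th block of $\mathbf{x}(t_{m+1})$ is $\mathbf{x}_i(t^{(P_{i,m}+1)}_m)$, while by the definition of the delayed state the $i$th block of $\mathbf{x}^{\mathrm{d}}(t_{m+1})$ is $\mathbf{x}_i^{\mathrm{d}}(t_{m+1})=\mathbf{x}_i(t^{(n_{i,m})}_m)$. Using (\ref{e0}) to write $\mathbf{x}_i(t^{(n)}_m+1)=\mathbf{x}_i(t^{(n+1)}_m)$, the block difference telescopes,
\[
\mathbf{x}_i(t^{(P_{i,m}+1)}_m)-\mathbf{x}_i(t^{(n_{i,m})}_m)=\sum_{n=n_{i,m}}^{P_{i,m}}\big(\mathbf{x}_i(t^{(n+1)}_m)-\mathbf{x}_i(t^{(n)}_m)\big),
\]
a sum of $P_{i,m}-n_{i,m}+1$ vectors, and this count is at most $D$ by (\ref{6}). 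Applying $\|\sum_{j=1}^{k}\mathbf{v}_j\|^2\le k\sum_{j=1}^{k}\|\mathbf{v}_j\|^2$ with $k=P_{i,m}-n_{i,m}+1\le D$, and then relaxing the summation range from $\{n_{i,m},\dots,P_{i,m}\}$ to $\{1,\dots,P_{i,m}\}$, gives $\|\mathbf{x}_i(t_{m+1})-\mathbf{x}_i^{\mathrm{d}}(t_{m+1})\|^2\le D\sum_{n=1}^{P_{i,m}}\|\mathbf{x}_i(t^{(n+1)}_m)-\mathbf{x}_i(t^{(n)}_m)\|^2$; summing over $i\in\mathcal{V}$ yields (\ref{l1}).

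For (\ref{l2}) I would argue identically, the only change being that the telescoping now runs over the whole slot: the $i$th block of $\mathbf{x}(t_{m+1})$ is again $\mathbf{x}_i(t^{(P_{i,m}+1)}_m)$ by (\ref{e2}), while the $i$th block of $\mathbf{x}(t_m)$ equals $\mathbf{x}_i(t^{(1)}_m)$ because agent $i$ performs no update on $[t_m,t^{(1)}_m)$ and hence its state is frozen there by the non-action update rule of Algorithm \ref{a1x}. Thus $\mathbf{x}_i(t_{m+1})-\mathbf{x}_i(t_m)=\sum_{n=1}^{P_{i,m}}\big(\mathbf{x}_i(t^{(n+1)}_m)-\mathbf{x}_i(t^{(n)}_m)\big)$, a sum of $P_{i,m}\le H$ terms since $P_{i,m}\in[1,H]$, and the same squared-norm estimate followed by summation over $i$ gives (\ref{l2}).

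The routine ingredient is the inequality $\|\sum_{j=1}^{k}\mathbf{v}_j\|^2\le k\sum_{j=1}^{k}\|\mathbf{v}_j\|^2$ (Cauchy--Schwarz, or convexity of $\|\cdot\|^2$); the point that needs care — and what I would regard as the main obstacle — is the bookkeeping: correctly identifying the blocks of $\mathbf{x}(t_{m+1})$, $\mathbf{x}(t_m)$ and $\mathbf{x}^{\mathrm{d}}(t_{m+1})$ with the per-agent, update-indexed iterates through Proposition \ref{pp1} and the index conventions (\ref{pan2})--(\ref{pan3}), together with invoking (\ref{6}) to certify that the number of telescoped increments in the first bound does not exceed $D$. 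Once those identifications are in place, both inequalities follow immediately.
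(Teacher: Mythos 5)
Your proof is correct and follows essentially the same route as the paper's: blockwise decomposition over agents, telescoping into consecutive within-slot increments, the bound $\|\sum_{j=1}^{k}\mathbf{v}_j\|^2\le k\sum_{j=1}^{k}\|\mathbf{v}_j\|^2$, and then bounding the number of increments by $D$ via (\ref{6}) for the first inequality and by $P_{i,m}\le H$ for the second. The bookkeeping via Proposition \ref{pp1} and the relaxation of the summation range to $\{1,\dots,P_{i,m}\}$ are exactly as in the paper's argument.
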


\begin{proof}
See Appendix \ref{la2p}.
\end{proof}

\section{Main Result}\label{se5}

In this section, we will establish the parameters of the Asyn-PPG algorithm for solving Problem (P1) in the SAN.

\subsection{Determination of Parameters}

In Algorithm \ref{a1x}, the penalty coefficient $\frac{\beta}{\alpha_i (t_{m+1}-1)}$ is designed to be increased steadily with $m \rightarrow + \infty$, which can speed up convergence rate compared with the corresponding fixed penalty method. The updating law of sequence $\{\alpha_i(t)_{>0}\}_{t\in \mathcal{T}_i}$ for agent $i$ is designed as
\begin{equation}\label{th12+1}
  \frac{1- {\theta_i(t^{(n)}_m)}}{\alpha_i(t^{(n)}_m)} = \frac{1}{\alpha_i(t^{(n-1)}_m)},
\end{equation}
and sequence $\{\eta_i(t)_{>0}\}_{t\in \mathcal{T}_i}$ is decided by
\begin{equation}\label{th14+1}
\frac{\theta_i(t^{(n)}_m) - \theta_i(t^{(n)}_m)\eta_i(t^{(n)}_m)\mu_i}{\eta_i(t^{(n)}_m)\alpha_i(t^{(n)}_m)} \leq \frac{\theta_i(t^{(n-1)}_{m})}{\eta_i(t^{(n-1)}_{m})\alpha_i(t^{(n-1)}_{m})},
\end{equation}
with $\theta_i(t^{(n)}_m) \in (0,1)$, $\forall i \in \mathcal{V}$, $m \in \mathbb{N}_+$, $n \in \mathcal{P}^m_i$.
\begin{Proposition}\label{pr1}
({Strictly Decreasing}) Given that $\alpha_i(t^{(0)}_1) > 0$, the sequence $\{ \alpha_i(t)\}_{t \in \mathcal{T}_i}$ generated by (\ref{th12+1}) is strictly decreasing with $t \rightarrow + \infty$, $\forall i \in \mathcal{V}$.
\end{Proposition}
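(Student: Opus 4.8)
The plan is to recast the update rule (\ref{th12+1}) into a multiplicative form and then run a single induction over the globally ordered action instants of agent $i$.

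First I would rearrange (\ref{th12+1}) into
\[
\alpha_i(t^{(n)}_m) = \bigl(1 - \theta_i(t^{(n)}_m)\bigr)\,\alpha_i(t^{(n-1)}_m),
\]
and observe that, since $\theta_i(t^{(n)}_m) \in (0,1)$ by hypothesis, the factor $1 - \theta_i(t^{(n)}_m)$ lies strictly in $(0,1)$. Next I would splice the per-slot families $\{\alpha_i(t^{(n)}_m)\}_{n \in \mathcal{P}^m_i}$ into one sequence: by the boundary conventions (\ref{pan2})--(\ref{pan3}) together with the identity $\alpha_i(t^{(P_{i,m})}_m) = \alpha_i(t^{(0)}_{m+1})$ from Proposition \ref{pp1} (eq. (\ref{e6})), the recursions concatenate consistently across slots, giving a single recursion along $\mathcal{T}_i$ with base point $\alpha_i(t^{(0)}_1) > 0$. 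Note that at a non-action instant $\alpha_i$ is merely held constant (mirroring the state update in Algorithm \ref{a1x}), so the monotonicity statement is the right one to make along $\mathcal{T}_i$.

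Then I would establish positivity by induction: if $\alpha_i(t^{(n-1)}_m) > 0$, then $\alpha_i(t^{(n)}_m)$ is the product of a strictly positive number and a number in $(0,1)$, hence strictly positive; starting from $\alpha_i(t^{(0)}_1) > 0$ this propagates to every term. With positivity in hand, the same display yields $\alpha_i(t^{(n)}_m) = (1-\theta_i(t^{(n)}_m))\,\alpha_i(t^{(n-1)}_m) < \alpha_i(t^{(n-1)}_m)$ for every consecutive pair, and since each slot contributes at least one update by Assumption \ref{a4}, the index runs to infinity, so the sequence is strictly decreasing as $t \to +\infty$.

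The main obstacle is essentially bookkeeping rather than analysis: one must be careful that the slot-indexed quantities $\{\alpha_i(t^{(n)}_m)\}$ do glue into one monotone sequence, which is exactly what (\ref{pan2})--(\ref{pan3}) and (\ref{e6}) provide, and that (\ref{th12+1}) is invoked only at genuine action instants. Once the indexing is pinned down, the argument is a one-line induction.
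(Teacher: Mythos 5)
Your argument is correct and is exactly the reasoning the paper compresses into its one-line proof: rewrite (\ref{th12+1}) as $\alpha_i(t^{(n)}_m) = (1-\theta_i(t^{(n)}_m))\,\alpha_i(t^{(n-1)}_m)$ with $1-\theta_i(t^{(n)}_m)\in(0,1)$, propagate positivity from $\alpha_i(t^{(0)}_1)>0$, and use (\ref{e6}) to glue the per-slot recursions into a single strictly decreasing sequence along $\mathcal{T}_i$. Your version simply makes the induction and the slot-boundary bookkeeping explicit.
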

\begin{proof}
The validation of proposition \ref{pr1} is straightforward with $\theta_i(t^{(n)}_m) \in (0,1)$ in (\ref{th12+1}) and relation (\ref{e6}).
\end{proof}
\begin{Proposition}\label{pp2}
({Equivalent Representation B}) By Algorithm \ref{a1x} and (\ref{th12+1}), $\forall i \in \mathcal{V}$, $m \in \mathbb{N}$, we have
\begin{subequations}
\begin{align}\label{}
  & \theta_i(t^{(P_{i,m})}_{m})= \theta_i(t^{(0)}_{m+1}), \label{e9} \\
 & \theta_i(t_{m}^{(P_{i,m})}) = \theta_i(t_{m}^{(P_{i,m}+1)} -1), \label{e8} \\
 & \theta_i(t_{m+1}-1)= \theta_i(t_{m}^{(P_{i,m}+1)} -1). \label{e8+1}
\end{align}
\end{subequations}
\end{Proposition}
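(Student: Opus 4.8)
The plan is to establish Proposition~\ref{pp2} by the same bookkeeping used in Proposition~\ref{pp1}, viewing $\{\theta_i(t)\}_{t\in\mathcal{T}_i}$ as one more agent-$i$ parameter sequence that is refreshed only at agent $i$'s own action instants and left unchanged in between. First I would dispatch (\ref{e9}): by the index convention (\ref{pan3}), the symbol $t^{(0)}_{m+1}$ denotes the \emph{same} time instant as $t^{(P_{i,m})}_{m}$, so every time-indexed quantity takes equal values there; in particular $\theta_i(t^{(P_{i,m})}_{m})=\theta_i(t^{(0)}_{m+1})$. This is the exact analogue of how (\ref{e6}) and (\ref{e7}) were obtained for $\alpha_i$ and $\eta_i$ in the proof of Proposition~\ref{pp1}.

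For (\ref{e8}) and (\ref{e8+1}) I would argue as follows. By (\ref{pan2}) we have $t^{(P_{i,m}+1)}_{m}=t^{(1)}_{m+1}$, hence $t^{(P_{i,m}+1)}_{m}-1=t^{(1)}_{m+1}-1$. Between agent $i$'s last action in slot $m$, namely $t^{(P_{i,m})}_{m}$, and its first action in slot $m+1$, namely $t^{(1)}_{m+1}$, agent $i$ performs no update, so $\theta_i$ is constant on the integer interval $[\,t^{(P_{i,m})}_{m},\,t^{(1)}_{m+1}-1\,]$ with common value $\theta_i(t^{(P_{i,m})}_{m})$. By Proposition~\ref{pp10}, $t^{(P_{i,m})}_{m}\le t_{m+1}-1<t_{m+1}\le t^{(1)}_{m+1}$, so both $t_{m+1}-1$ and $t^{(P_{i,m}+1)}_{m}-1=t^{(1)}_{m+1}-1$ lie in this interval; evaluating the constant at these two points yields $\theta_i(t_{m+1}-1)=\theta_i(t^{(P_{i,m})}_{m})=\theta_i(t^{(P_{i,m}+1)}_{m}-1)$, which is precisely (\ref{e8}) together with (\ref{e8+1}).

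An alternative, purely algebraic route avoids relying on the piecewise-constant convention and reads $\theta_i$ off of (\ref{th12+1}) instead: solving that relation gives $\theta_i(\cdot)=1-\alpha_i(\cdot)/\alpha_i(\text{preceding action instant})$, and since the $\alpha_i$-identities (\ref{e4}) and (\ref{e4+1}) from Proposition~\ref{pp1} already assert that the relevant $\alpha_i$-values coincide across $t^{(P_{i,m})}_{m}$, $t^{(P_{i,m}+1)}_{m}-1$, and $t_{m+1}-1$ (and likewise the value at the preceding action instant is common to all three), the associated $\theta_i$-values must coincide as well.

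I do not expect a genuine obstacle here: this is a notational lemma whose only role is to align the parameter indices at slot boundaries. The single point demanding care is to fix, and then use consistently, the convention that each agent's parameter sequences are piecewise constant between its successive action instants, and to invoke Proposition~\ref{pp10} to place $t_{m+1}-1$ in the correct ``dead zone'' so that the constancy argument (or, equivalently, the $\alpha_i$-identities of Proposition~\ref{pp1}) applies.
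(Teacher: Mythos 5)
Your proposal is correct and follows essentially the same route as the paper: the paper simply notes that by (\ref{th12+1}) the values of $\theta_i$ and $\alpha_i$ are updated simultaneously at instants in $\mathcal{T}_i$, and then transfers the identities (\ref{e6}), (\ref{e4}), (\ref{e4+1}) from Proposition~\ref{pp1} to $\theta_i$ --- which is exactly your ``alternative'' route, while your primary piecewise-constancy argument via (\ref{pan2}), (\ref{pan3}) and Proposition~\ref{pp10} is just the same underlying bookkeeping spelled out directly for $\theta_i$.
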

\begin{proof}
Note that by (\ref{th12+1}), the values of $\theta_i$ and $\alpha_i$ are updated simultaneously at any instant in $\mathcal{T}_i$ after the initialization of $\alpha_i$. Hence, by recalling equivalent representations (\ref{e6}), (\ref{e4}) and (\ref{e4+1}), (\ref{e9})-(\ref{e8+1}) can be verified.
\end{proof}


Before more detailed discussions on the updating law of $\alpha_i$ and $\theta_i$, we introduce the following definition.

\begin{Definition}\label{dd1}
(Synchronization of $\{\alpha_i(t_{m}-1) \}_{m \in \mathbb{N}_+}$) In the SAN, sequence $\{\alpha_i(t_{m}-1) \}_{m \in \mathbb{N}_+}$ is synchronized if
\begin{align}\label{tha1}
& \alpha_1(t_{m}-1)= ... = \alpha_i(t_{m}-1)= ... = \alpha_{N}(t_{m}-1).
\end{align}
\end{Definition}

Under condition (\ref{tha1}), we further define a common sequence $\{\alpha(t_{m}-1) \}_{m \in \mathbb{N}_+}$ with
\begin{align}\label{cc}
\alpha(t_{m}-1) =\alpha_i(t_{m}-1)
\end{align}
for convenience purpose, $\forall i \in \mathcal{V}$.

The synchronization strategy for $\{\alpha_i(t_{m}-1) \}_{m \in \mathbb{N}_+}$ is not unique. One feasible realization is provided as follows.

\begin{Lemma}\label{lm3}
Let (\ref{th12+1}) hold. Let
\begin{align}\label{1}
{\alpha_1(t^{(P_{1,0})}_{0})}= ... = {\alpha_{N}(t^{(P_{{N},0})}_{0})}
\end{align}
and
\begin{align}\label{lp1}
\frac{\theta_i(t^{(n)}_m)}{\alpha_i(t^{(n)}_m)} = \frac{1}{P_{i,m}},
\end{align}
$\forall i \in \mathcal{V}$, $m \in \mathbb{N}_+$, $n \in \mathcal{P}^m_i$. Then, we have (\ref{tha1}),
\begin{align}
& \alpha(t_m-1)=\frac{\alpha(t_{1}-1)}{(m-1)\alpha(t_{1}-1)+1}, \label{17-1} \\
& \frac{1}{\alpha_i(t_m^{(n)})} = \frac{n}{P_{i,m}} + \frac{1}{\alpha(t_1 - 1)} + m-1, \label{17-2} \\
& \frac{\alpha_i(t_m^{(n)})}{\alpha(t_{m+2}-1)} \in ( 1,\Pi], \quad \Pi =  \frac{2\alpha(t_1-1) + 1}{\frac{1}{H}\alpha(t_1-1) + 1}. \label{17-5}
\end{align}
\end{Lemma}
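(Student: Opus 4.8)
The plan is to unfold the recursion (\ref{th12+1}) at the slot-boundary instants, using the simplifying choice (\ref{lp1}) together with the inter-slot identifications from Propositions \ref{pp1} and \ref{pp2}. First I would rewrite (\ref{th12+1}) as $\frac{1}{\alpha_i(t^{(n)}_m)} = \frac{1}{\alpha_i(t^{(n-1)}_m)} + \frac{\theta_i(t^{(n)}_m)}{\alpha_i(t^{(n)}_m)}$, and substitute (\ref{lp1}) to get the clean increment $\frac{1}{\alpha_i(t^{(n)}_m)} = \frac{1}{\alpha_i(t^{(n-1)}_m)} + \frac{1}{P_{i,m}}$. Telescoping over $n = 1,\dots,P_{i,m}$ within slot $m$ shows that $\frac{1}{\alpha_i}$ increases by exactly $1$ across each full slot, while at an intermediate index $n$ it has advanced by $n/P_{i,m}$; combined with the boundary relations (\ref{e6}) (i.e.\ $\alpha_i(t^{(P_{i,m})}_m) = \alpha_i(t^{(0)}_{m+1})$) this chains the slots together. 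Starting from the common initial value $\alpha(t_1-1)$ — which is legitimate because (\ref{1}) with (\ref{e4}), (\ref{e4+1}) makes the reciprocals agree at the slot-0 boundary — induction on $m$ then gives $\frac{1}{\alpha_i(t^{(n)}_m)} = \frac{n}{P_{i,m}} + \frac{1}{\alpha(t_1-1)} + (m-1)$, which is exactly (\ref{17-2}). Setting $n = P_{i,m}$ kills the agent-dependent term $n/P_{i,m} = 1$, so $\frac{1}{\alpha_i(t^{(P_{i,m})}_m)}$ is the same for all $i$; via (\ref{e4}), (\ref{e4+1}) this is $\frac{1}{\alpha_i(t_{m+1}-1)}$, establishing the synchronization (\ref{tha1}) and, after re-indexing $m \mapsto m-1$, the closed form (\ref{17-1}) $\alpha(t_m-1) = \frac{\alpha(t_1-1)}{(m-1)\alpha(t_1-1)+1}$.

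For the two-sided bound (\ref{17-5}), I would simply divide the closed form (\ref{17-2}) for $\frac{1}{\alpha_i(t^{(n)}_m)}$ by the closed form (\ref{17-1}) for $\frac{1}{\alpha(t_{m+2}-1)} = \frac{(m+1)\alpha(t_1-1)+1}{\alpha(t_1-1)} = (m+1) + \frac{1}{\alpha(t_1-1)}$. This yields
\[
\frac{\alpha_i(t_m^{(n)})}{\alpha(t_{m+2}-1)} = \frac{(m+1) + \frac{1}{\alpha(t_1-1)}}{(m-1) + \frac{n}{P_{i,m}} + \frac{1}{\alpha(t_1-1)}}.
\]
The lower bound $>1$ follows because $(m+1) > (m-1) + \frac{n}{P_{i,m}}$ (since $\frac{n}{P_{i,m}} \le 1 < 2$), so the numerator strictly exceeds the denominator. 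For the upper bound $\le \Pi$, the ratio is largest when the denominator is smallest, i.e.\ at $m=1$, $n=1$, and $P_{i,1} = H$ (its maximum, by Assumption \ref{a4} the frequency lies in $[1,H]$); plugging in gives $\frac{2 + \frac{1}{\alpha(t_1-1)}}{\frac{1}{H} + \frac{1}{\alpha(t_1-1)}} = \frac{2\alpha(t_1-1)+1}{\frac{1}{H}\alpha(t_1-1)+1} = \Pi$, after clearing the common factor $\alpha(t_1-1)$.

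The genuinely delicate step is the first one: being careful with the slot-boundary bookkeeping so that the telescoping is valid. One must verify that the recursion (\ref{th12+1}), which a priori only relates consecutive \emph{action} instants of a single agent, can be continued past a slot boundary — this is exactly what (\ref{e6}) (equivalently the conventions (\ref{pan2}), (\ref{pan3})) supplies, and what lets the per-slot increment of $1$ in $\frac{1}{\alpha_i}$ accumulate cleanly to the $m-1$ term. A second point needing care is the base case of the induction: one must check that (\ref{1}) plus (\ref{e4})–(\ref{e4+1}) indeed force $\frac{1}{\alpha_i(t_1-1)} = \frac{1}{\alpha(t_1-1)}$ for all $i$ (not just at the slot-0 update index), so that the common constant $\frac{1}{\alpha(t_1-1)}$ appearing in (\ref{17-2}) is well defined and agent-independent. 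Once these two bookkeeping facts are in hand, (\ref{17-1})–(\ref{17-5}) are routine algebra, as sketched above.
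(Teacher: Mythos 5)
Your proposal is correct and follows essentially the same route as the paper's proof: unroll the recursion (\ref{th12+1}) under (\ref{lp1}) into arithmetic progressions of the reciprocals $1/\alpha_i$, chain the slots together via the boundary identifications (\ref{e6}), (\ref{e4}), (\ref{e4+1}) to obtain the closed forms (\ref{17-1})--(\ref{17-2}) and the synchronization (\ref{tha1}), and then read off (\ref{17-5}) from those closed forms. One small caution on the upper bound: the phrase ``the ratio is largest when the denominator is smallest'' is not by itself a valid reason to set $m=1$, since the numerator $(m+1)+1/\alpha(t_1-1)$ also grows with $m$; the correct (and immediate) justification is that numerator and denominator increase by the same amount as $m$ increases while the numerator exceeds the denominator (because $n/P_{i,m}\le 1<2$), so the ratio is decreasing in $m$, after which your evaluation at $m=1$ and $n/P_{i,m}=1/H$ gives exactly $\Pi$, matching the paper's inequality chain.
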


\begin{proof}
See Appendix \ref{lm3p}.
\end{proof}

\begin{Lemma}\label{lpl}
Let (\ref{lp1}) hold. Let
\begin{align}\label{llp}
& \frac{\eta_i(t_{m}-1)}{\eta_j(t_{m}-1)} = \frac{P_{j,m-1}}{P_{i,m-1}},
\end{align}
$\forall i,j \in \mathcal{V}$, $m \in \mathbb{N}_+$. Then,
\begin{align}\label{cx}
\frac{\theta_1(t_{m}-1)} {\alpha_1(t_{m}-1)\eta_1(t_{m}-1)} & = ... = \frac{\theta_i(t_{m}-1)} {\alpha_i(t_{m}-1)\eta_i(t_{m}-1)} \nonumber \\
= ... = & \frac{\theta_{N}(t_{m}-1)} {\alpha_{N}(t_{m}-1)\eta_{N}(t_{m}-1)}.
\end{align}
\end{Lemma}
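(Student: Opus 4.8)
The plan is to reduce everything to the identities already established in Lemma \ref{lm3} and to the newly imposed ratio condition (\ref{llp}), and then just chase the algebra. Fix $m \in \mathbb{N}_+$. For agent $i$, evaluating (\ref{lp1}) at the last action instant of slot $m-1$, i.e. at $n = P_{i,m-1}$, together with the equivalences $\theta_i(t_m^{(P_{i,m-1})}) = \theta_i(t_{m}-1)$ and $\alpha_i(t_m^{(P_{i,m-1})}) = \alpha_i(t_{m}-1)$ from (\ref{e8+1}) and (\ref{e4+1}) (applied with slot index $m-1$), gives
\begin{align}
\frac{\theta_i(t_{m}-1)}{\alpha_i(t_{m}-1)} = \frac{1}{P_{i,m-1}}, \quad \forall i \in \mathcal{V}. \nonumber
\end{align}
Hence the common quantity in (\ref{cx}) satisfies, for each $i$,
\begin{align}
\frac{\theta_i(t_{m}-1)}{\alpha_i(t_{m}-1)\eta_i(t_{m}-1)} = \frac{1}{P_{i,m-1}\,\eta_i(t_{m}-1)}. \nonumber
\end{align}

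So it remains to show that $P_{i,m-1}\,\eta_i(t_{m}-1)$ is independent of $i$. This is exactly what (\ref{llp}) delivers: rewriting $\frac{\eta_i(t_{m}-1)}{\eta_j(t_{m}-1)} = \frac{P_{j,m-1}}{P_{i,m-1}}$ as $P_{i,m-1}\,\eta_i(t_{m}-1) = P_{j,m-1}\,\eta_j(t_{m}-1)$ for all $i,j \in \mathcal{V}$ shows the product is the same constant across all agents. Substituting back, all $N$ expressions in (\ref{cx}) equal that single common value $\frac{1}{P_{i,m-1}\eta_i(t_m-1)}$, which proves the chain of equalities. I would present this as three short displayed steps: (i) specialize (\ref{lp1}) to $n=P_{i,m-1}$ and invoke the boundary equivalences to get $\theta_i/\alpha_i = 1/P_{i,m-1}$ at $t_m-1$; (ii) divide by $\eta_i(t_m-1)$; (iii) use (\ref{llp}) to cancel the agent dependence.

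There is essentially no hard obstacle here — the statement is a bookkeeping consequence of definitions. The one point requiring care is the index shift: condition (\ref{lp1}) is stated for $m \in \mathbb{N}_+$ and $n \in \mathcal{P}^m_i$, and I need it at the \emph{final} update of slot $m-1$, so I must make sure $m-1 \in \mathbb{N}_+$ (i.e. the argument is run for $m \ge 2$, with the base relating back to (\ref{1})) and that the bridging identities (\ref{e4+1}) and (\ref{e8+1}) are being applied at the correct slot, namely with $m$ there replaced by $m-1$, so that $t_{(m-1)+1}-1 = t_m - 1$ matches. Once that alignment is made explicit, the rest is a one-line substitution, and I would keep the write-up correspondingly brief, pointing to Proposition \ref{pp2} and Proposition \ref{pp1} for the equivalent representations and to (\ref{lp1}), (\ref{llp}) for the imposed conditions.
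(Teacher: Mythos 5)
Your proposal is correct and follows essentially the same route as the paper's own proof: specialize (\ref{lp1}) at the final update of slot $m-1$, use the boundary equivalences of Propositions \ref{pp1} and \ref{pp2} to get $\theta_i(t_m-1)/\alpha_i(t_m-1)=1/P_{i,m-1}$, and then cancel the agent dependence via (\ref{llp}). The index-shift caveat you raise (and the small typo $t_m^{(P_{i,m-1})}$ for $t_{m-1}^{(P_{i,m-1})}$) does not change the argument, which matches the paper's Appendix~\ref{lplp} line for line.
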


\begin{proof}
See Appendix \ref{lplp}.
\end{proof}

Under condition (\ref{cx}), we define a common sequence $\{ \Xi_m \}_{m \in \mathbb{N}_+}$ with
\begin{align}\label{271}
\Xi_m = \frac{\theta_i(t_{m}-1)} {\alpha_i(t_{m}-1)\eta_i(t_{m}-1)}
\end{align}
for convenience purpose, $\forall i \in \mathcal{V}$.

\begin{Remark}
{Lemmas \ref{lm3}} and {\ref{lpl}} imply that the determination of $\alpha_i$, $\theta_i$, and $\eta_i$ requires some slot-wide knowledge of the actions, i.e., $P_{i,m}$, which is realizable when each agent knows the update frequency of itself.
\end{Remark}

\subsection{Convergence Analysis}

Based on the previous discussions, we are ready to provide the main results as follows.

\begin{Lemma}\label{z}
In the proposed SAN, suppose that {Assumptions \ref{a0}} to {\ref{a5}}, (\ref{th12+1}), and (\ref{th14+1}) hold. Then, by Algorithm \ref{a1x}, for any $({\mathbf{x}}^*,{\bm{\lambda}}^*) \in \mathcal{X}$, $i\in \mathcal{V}$, $m \in \mathbb{N}_+$, $n \in \mathcal{P}^m_i$, we have
\begin{align}\label{zt2}
 &\frac{1}{\alpha_i(t_{m+1}-1)}({F}_i(\mathbf{x}_i(t_{m+1})) - {F}_i(\mathbf{x}_i^*) + \langle \bm{\lambda}^*, \mathbf{A}_i\mathbf{x}_i(t_{m+1})\rangle) \nonumber \\
& - \frac{1}{\alpha_i(t_{m}-1)}({F}_i(\mathbf{x}_i(t_{m})) - {F}_i(\mathbf{x}_i^*) + \langle \bm{\lambda}^*, \mathbf{A}_i\mathbf{x}_i(t_{m})\rangle) \nonumber \\
   \leq & \frac{1}{\beta} \langle \bm{\lambda}^*- \frac{\beta \mathbf{A}\mathbf{x}^{\mathrm{d}}(t_m)}{\alpha_i(t_{m+1}-1)}, \frac{\beta\mathbf{A}_i\mathbf{x}_i(t_{m+1})}{\alpha_i(t_{m+1}-1)} - \frac{\beta\mathbf{A}_i\mathbf{x}_i(t_{m})}{\alpha_i(t_m-1)} \rangle  \nonumber \\
& + \sum_{n=1}^{P_{i,m}} \frac{1}{2\alpha_i(t^{(n)}_m)}(L_i- \frac{2-\theta_i(t^{(n)}_m)}{\eta_i(t^{(n)}_m)} ) \parallel \mathbf{x}_i(t_m^{(n+1)}) \nonumber \\
& -\mathbf{x}_i(t_m^{(n)}) \parallel^2 + \sum_{n=1}^{P_{i,m}} \frac{\theta_i(t_m^{(n)})}{\alpha_i(t_m^{(n)})} \langle \frac{\beta \mathbf{A}\mathbf{x}^{\mathrm{d}}(t_m)}{\alpha_i(t_{m+1}-1)},  \mathbf{A}_i \mathbf{x}_i^* \rangle \nonumber \\
& + \frac{\theta_i(t_{m}-1)}{2\alpha_i(t_{m}-1)\eta_i(t_{m}-1)} \parallel \mathbf{x}_i^*-\mathbf{x}_i(t_{m}) \parallel^2 \nonumber \\
&  - \frac{\theta_i(t_{m+1}-1)}{2\alpha_i(t_{m+1}-1)\eta_i(t_{m+1}-1)} \parallel \mathbf{x}_i^*-\mathbf{x}_i(t_{m+1}) \parallel^2.
\end{align}
\end{Lemma}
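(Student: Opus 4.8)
The plan is to recognize that, within a fixed slot $m$, the update of agent $i$ in Algorithm \ref{a1x} is exactly one proximal gradient step on the auxiliary function $\Psi_i^m(\mathbf{x}_i) := F_i(\mathbf{x}_i) + \langle q_{i,m}, \mathbf{A}_i\mathbf{x}_i\rangle$, where $q_{i,m} := \frac{\beta}{\alpha_i(t_{m+1}-1)}\mathbf{A}\mathbf{x}^{\mathrm{d}}(t_m)$ is a vector that is \emph{constant} throughout slot $m$ (so that $\mathbf{A}_i^{\top}q_{i,m} = \frac{\beta\mathbf{W}_i}{\alpha_i(t_{m+1}-1)}\mathbf{x}^{\mathrm{d}}(t_m)$ is precisely the extra drift term in line 6 of Algorithm \ref{a1x}). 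Since $q_{i,m}$ enters only linearly, the smooth part $f_i(\cdot)+\langle q_{i,m},\mathbf{A}_i\cdot\rangle$ of $\Psi_i^m$ inherits the Lipschitz gradient constant $L_i$ and the strong-convexity modulus $\mu_i$ of $f_i$ (Assumption \ref{a1}). The goal is then to produce a per-update inequality, normalize it by $\alpha_i(t^{(n)}_m)$, telescope over $n=1,\dots,P_{i,m}$ using the recursions (\ref{th12+1}) and (\ref{th14+1}), and finally rewrite the resulting slot-to-slot bound in the $(F_i,\bm{\lambda}^*)$-form of (\ref{zt2}).

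First I would write the standard three-point proximal gradient inequality: combining the descent lemma for the smooth part, $\mu_i$-strong convexity, convexity of $h_i$, and the first-order optimality of $\mathrm{prox}^{\eta_i(t^{(n)}_m)}_{h_i}$, one obtains for every $\mathbf{u}\in\mathbb{R}^M$, using (\ref{e0}) to identify $\mathbf{x}_i(t^{(n)}_m+1)$ with $\mathbf{x}_i(t^{(n+1)}_m)$,
\begin{align*}
\Psi_i^m(\mathbf{x}_i(t^{(n+1)}_m)) \le {} & \Psi_i^m(\mathbf{u}) + \tfrac12\big(L_i - \tfrac{1}{\eta_i(t^{(n)}_m)}\big)\|\mathbf{x}_i(t^{(n+1)}_m)-\mathbf{x}_i(t^{(n)}_m)\|^2 \\
& + \big(\tfrac{1}{2\eta_i(t^{(n)}_m)}-\tfrac{\mu_i}{2}\big)\|\mathbf{x}_i(t^{(n)}_m)-\mathbf{u}\|^2 - \tfrac{1}{2\eta_i(t^{(n)}_m)}\|\mathbf{x}_i(t^{(n+1)}_m)-\mathbf{u}\|^2 .
\end{align*}
I would then apply this bound once with $\mathbf{u}=\mathbf{x}_i(t^{(n)}_m)$ (which collapses the last two terms and gives a descent step) and once with $\mathbf{u}=\mathbf{x}_i^*$, and take the convex combination of the two with weights $1-\theta_i(t^{(n)}_m)$ and $\theta_i(t^{(n)}_m)$. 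Collecting the $\|\mathbf{x}_i(t^{(n+1)}_m)-\mathbf{x}_i(t^{(n)}_m)\|^2$-coefficients into $\tfrac12(L_i-\tfrac{2-\theta_i(t^{(n)}_m)}{\eta_i(t^{(n)}_m)})$ and subtracting $\Psi_i^m(\mathbf{x}_i^*)$ from both sides, this yields a per-update bound of the form $\Psi_i^m(\mathbf{x}_i(t^{(n+1)}_m))-\Psi_i^m(\mathbf{x}_i^*) \le (1-\theta_i(t^{(n)}_m))\big(\Psi_i^m(\mathbf{x}_i(t^{(n)}_m))-\Psi_i^m(\mathbf{x}_i^*)\big) + \tfrac12(L_i-\tfrac{2-\theta_i(t^{(n)}_m)}{\eta_i(t^{(n)}_m)})\|\mathbf{x}_i(t^{(n+1)}_m)-\mathbf{x}_i(t^{(n)}_m)\|^2 + \tfrac{\theta_i(t^{(n)}_m)(1-\eta_i(t^{(n)}_m)\mu_i)}{2\eta_i(t^{(n)}_m)}\|\mathbf{x}_i(t^{(n)}_m)-\mathbf{x}_i^*\|^2 - \tfrac{\theta_i(t^{(n)}_m)}{2\eta_i(t^{(n)}_m)}\|\mathbf{x}_i(t^{(n+1)}_m)-\mathbf{x}_i^*\|^2$. (Note that no convexity of $\Psi_i^m$ is needed here, only that the three-point inequality holds for the two chosen $\mathbf{u}$'s.)

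Next I would divide by $\alpha_i(t^{(n)}_m)$ and invoke the parameter recursions: (\ref{th12+1}) turns $\tfrac{1-\theta_i(t^{(n)}_m)}{\alpha_i(t^{(n)}_m)}$ into $\tfrac{1}{\alpha_i(t^{(n-1)}_m)}$, while (\ref{th14+1}) bounds $\tfrac{\theta_i(t^{(n)}_m)(1-\eta_i(t^{(n)}_m)\mu_i)}{2\eta_i(t^{(n)}_m)\alpha_i(t^{(n)}_m)}$ by $\tfrac{\theta_i(t^{(n-1)}_m)}{2\eta_i(t^{(n-1)}_m)\alpha_i(t^{(n-1)}_m)}$. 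Writing $a_n:=\tfrac{1}{\alpha_i(t^{(n)}_m)}(\Psi_i^m(\mathbf{x}_i(t^{(n+1)}_m))-\Psi_i^m(\mathbf{x}_i^*))$ and $b_n:=\tfrac{\theta_i(t^{(n)}_m)}{2\eta_i(t^{(n)}_m)\alpha_i(t^{(n)}_m)}\|\mathbf{x}_i(t^{(n+1)}_m)-\mathbf{x}_i^*\|^2$, the per-update bound becomes $a_n+b_n\le a_{n-1}+b_{n-1}+\tfrac{1}{2\alpha_i(t^{(n)}_m)}(L_i-\tfrac{2-\theta_i(t^{(n)}_m)}{\eta_i(t^{(n)}_m)})\|\mathbf{x}_i(t^{(n+1)}_m)-\mathbf{x}_i(t^{(n)}_m)\|^2$, which I sum over $n=1,\dots,P_{i,m}$. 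The endpoint terms $a_0,b_0$ and $a_{P_{i,m}},b_{P_{i,m}}$ are then rewritten via Propositions \ref{pp1} and \ref{pp2}: relations (\ref{e1}), (\ref{e2}) give $\mathbf{x}_i(t^{(P_{i,m}+1)}_m)=\mathbf{x}_i(t_{m+1})$ and, after shifting $m\mapsto m-1$, $\mathbf{x}_i(t^{(1)}_m)=\mathbf{x}_i(t_m)$; relations (\ref{e4}), (\ref{e4+1}), (\ref{e5}), (\ref{e5+1}), (\ref{e6}), (\ref{e7}), (\ref{e8}), (\ref{e8+1}), (\ref{e9}) identify $\alpha_i,\eta_i,\theta_i$ at $t^{(0)}_m$ and $t^{(P_{i,m})}_m$ with their values at $t_m-1$ and $t_{m+1}-1$. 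This produces $a_{P_{i,m}}-a_0 \le \sum_{n=1}^{P_{i,m}}\tfrac{1}{2\alpha_i(t^{(n)}_m)}(L_i-\tfrac{2-\theta_i(t^{(n)}_m)}{\eta_i(t^{(n)}_m)})\|\mathbf{x}_i(t^{(n+1)}_m)-\mathbf{x}_i(t^{(n)}_m)\|^2 + \tfrac{\theta_i(t_m-1)}{2\alpha_i(t_m-1)\eta_i(t_m-1)}\|\mathbf{x}_i^*-\mathbf{x}_i(t_m)\|^2 - \tfrac{\theta_i(t_{m+1}-1)}{2\alpha_i(t_{m+1}-1)\eta_i(t_{m+1}-1)}\|\mathbf{x}_i^*-\mathbf{x}_i(t_{m+1})\|^2$, with $a_{P_{i,m}}=\tfrac{1}{\alpha_i(t_{m+1}-1)}(\Psi_i^m(\mathbf{x}_i(t_{m+1}))-\Psi_i^m(\mathbf{x}_i^*))$ and $a_0=\tfrac{1}{\alpha_i(t_m-1)}(\Psi_i^m(\mathbf{x}_i(t_m))-\Psi_i^m(\mathbf{x}_i^*))$.

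Finally I would convert the $\Psi_i^m$-differences into the $(F_i,\bm{\lambda}^*)$-form of (\ref{zt2}). Since $\Psi_i^m=F_i(\cdot)+\langle q_{i,m},\mathbf{A}_i\cdot\rangle$, I split $\langle q_{i,m},\mathbf{A}_i\mathbf{v}\rangle=\langle\bm{\lambda}^*,\mathbf{A}_i\mathbf{v}\rangle+\langle q_{i,m}-\bm{\lambda}^*,\mathbf{A}_i\mathbf{v}\rangle$: the $\langle\bm{\lambda}^*,\cdot\rangle$ contributions at $t_m$ and $t_{m+1}$ reassemble exactly into the left-hand side of (\ref{zt2}); the $\langle q_{i,m}-\bm{\lambda}^*,\cdot\rangle$ contributions, evaluated at $\mathbf{x}_i(t_{m+1})$ with weight $\tfrac{1}{\alpha_i(t_{m+1}-1)}$ and at $\mathbf{x}_i(t_m)$ with weight $\tfrac{1}{\alpha_i(t_m-1)}$, reassemble into the first inner-product term on the right-hand side of (\ref{zt2}); and the leftover term $\langle q_{i,m},\mathbf{A}_i\mathbf{x}_i^*\rangle\big(\tfrac{1}{\alpha_i(t_{m+1}-1)}-\tfrac{1}{\alpha_i(t_m-1)}\big)$ equals the second (``penalty cross'') sum on the right-hand side of (\ref{zt2}), because (\ref{th12+1}) gives $\tfrac{\theta_i(t^{(n)}_m)}{\alpha_i(t^{(n)}_m)}=\tfrac{1}{\alpha_i(t^{(n)}_m)}-\tfrac{1}{\alpha_i(t^{(n-1)}_m)}$, so that $\sum_{n=1}^{P_{i,m}}\tfrac{\theta_i(t^{(n)}_m)}{\alpha_i(t^{(n)}_m)}=\tfrac{1}{\alpha_i(t_{m+1}-1)}-\tfrac{1}{\alpha_i(t_m-1)}$ once the endpoint identifications for $\alpha_i$ are applied. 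Assembling these pieces yields (\ref{zt2}). I expect the main obstacle to lie in this last stage of bookkeeping: one must keep the ``delayed'' vector $\mathbf{x}^{\mathrm{d}}(t_m)$ and the normalizer $\alpha_i(t_{m+1}-1)$ frozen at their slot-$m$ values throughout, map every $t^{(n)}_m$-indexed quantity to a slot-boundary quantity consistently through Propositions \ref{pp1} and \ref{pp2}, and verify that the penalty cross-terms generated by the $\Psi_i^m\!\to\!F_i$ conversion cancel \emph{exactly} against the telescoped $\theta_i/\alpha_i$ sum, so that no spurious term survives.
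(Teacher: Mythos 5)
Your proposal is correct and follows essentially the same route as the paper's proof: the three-point proximal gradient inequality, the $\theta_i$/$(1-\theta_i)$ convex combination of the bounds at $\mathbf{u}=\mathbf{x}_i^*$ and $\mathbf{u}=\mathbf{x}_i(t_m^{(n)})$, division by $\alpha_i(t_m^{(n)})$ with the recursions (\ref{th12+1})--(\ref{th14+1}), telescoping over $n$, and the endpoint identifications via Propositions \ref{pp1} and \ref{pp2} are all exactly the steps in Appendix \ref{z1}. The only difference is cosmetic: you bundle the penalty term into the auxiliary function $\Psi_i^m$ and unbundle it at the end via $\langle q_{i,m},\cdot\rangle=\langle\bm{\lambda}^*,\cdot\rangle+\langle q_{i,m}-\bm{\lambda}^*,\cdot\rangle$, whereas the paper carries $F_i$, the multiplier term, and the penalty cross-term separately throughout (introducing $\langle\bm{\lambda}^*,\mathbf{A}_i\cdot\rangle$ only at (\ref{p6+2})); both bookkeeping schemes land on the identical inequality (\ref{zt2}).
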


\begin{proof}
See Appendix \ref{z1}.
\end{proof}

{Lemma \ref{z}} provides a basic result for further convergence analysis. It can be seen that, in the proposed SAN, the state of agent $i$ is decided by its own parameters $\alpha_i$, $\theta_i$ and $\eta_i$, which are further decided by the action instants in $\mathcal{T}_i$. By the parameter settings in Lemmas \ref{lm3} and \ref{lpl}, we have the following theorem.

\begin{Theorem}\label{th1}
In the proposed SAN, suppose that {Assumptions \ref{a0}} to {\ref{a5}}, (\ref{th12+1}), (\ref{1}), and (\ref{lp1}) hold. Choose an $\eta_i(t^{(n)}_m)$ such that (\ref{th14+1}), (\ref{llp}), and
\begin{align}\label{th13}
& \frac{1}{\eta_i(t^{(n)}_m)} \geq  L_i + \frac{2  (H+D)\beta \Pi \parallel \mathbf{A} \parallel^2}{\alpha(t_{m+2}-1)}
\end{align}
hold, $\forall i \in \mathcal{V}$, $ m \in \mathbb{N}_+$, $n \in \mathcal{P}^m_i$. Then, by Algorithm \ref{a1x}, for certain $K \in \mathbb{N}_+$ and any $({\mathbf{x}}^*,{\bm{\lambda}}^*) \in \mathcal{X}$, we have
\begin{align}\label{}
& \mid  F(\mathbf{x}(t_{K+1}))  - F(\mathbf{x}^*) \mid \nonumber \\
 & \quad \quad \quad \leq  ( \Delta_1 + \Delta_2 \parallel \bm{\lambda}^* \parallel) \alpha(t_{K+1}-1), \label{f2} \\
  & \parallel \mathbf{A}\mathbf{x}  (t_{K+1}) \parallel \leq \Delta_2 \alpha (t_{K+1}-1), \label{f3}
\end{align}
where
\begin{align}\label{}
 \Delta_1= &  \frac{1}{\alpha(t_{1}-1)}({F}(\mathbf{x}(t_{1})) - {F}(\mathbf{x}^*) + \langle \bm{\lambda}^*, \mathbf{A}\mathbf{x}(t_{1})\rangle) \nonumber \\
 &  + \frac{1}{2\beta } \parallel \frac{\beta\mathbf{A}\mathbf{x}(t_1)}{\alpha(t_1-1)} - \bm{\lambda}^* \parallel^2 + \frac{\Xi_1}{2} \parallel \mathbf{x}^*-\mathbf{x}(t_1) \parallel^2 \nonumber \\
&  +  \sum_{i\in \mathcal{V}}\sum_{n=1}^{P_{i,0}} \frac{D\beta \parallel \mathbf{A} \parallel^2}{\alpha^2(t_{2}-1)} \parallel \mathbf{x}_i(t_{0}^{(n+1)}) - \mathbf{x}_i(t_{0}^{(n)}) \parallel^2, \label{c1}  \\
 \Delta_2= & \frac{\sqrt{2\beta \Delta_1} + \parallel \bm{\lambda}^* \parallel }{\beta}. \label{c2}
\end{align}
\end{Theorem}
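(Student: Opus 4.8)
The plan is to start from Lemma~\ref{z} and sum the per-agent inequality \eqref{zt2} over $i\in\mathcal{V}$, then telescope over the slot index from $m=1$ to $m=K$. Since Assumptions~\ref{a0}--\ref{a5}, \eqref{th12+1}, \eqref{1}, and \eqref{lp1} hold, Lemma~\ref{lm3} applies, so $\alpha_i(t_m-1)$ is synchronized and equals the common value $\alpha(t_m-1)$ of \eqref{cc}; likewise \eqref{llp} plus Lemma~\ref{lpl} gives the common quantity $\Xi_m$ of \eqref{271}. This collapses the left-hand side of the summed-and-telescoped inequality into
$\frac{1}{\alpha(t_{K+1}-1)}\big(F(\mathbf{x}(t_{K+1}))-F(\mathbf{x}^*)+\langle\bm{\lambda}^*,\mathbf{A}\mathbf{x}(t_{K+1})\rangle\big)$ minus the analogous term at $m=1$, and the $\parallel\mathbf{x}_i^*-\mathbf{x}_i(\cdot)\parallel^2$ terms telescope into $\tfrac{\Xi_1}{2}\parallel\mathbf{x}^*-\mathbf{x}(t_1)\parallel^2$ minus a nonnegative terminal term that we simply drop.

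Next I would dispose of the three ``bad'' groups of terms on the right of \eqref{zt2}. The quadratic terms $\frac{1}{2\alpha_i(t^{(n)}_m)}(L_i-\frac{2-\theta_i}{\eta_i})\parallel\mathbf{x}_i(t_m^{(n+1)})-\mathbf{x}_i(t_m^{(n)})\parallel^2$ must be shown $\le$ a controllable negative quantity; condition \eqref{th13} combined with $\theta_i\in(0,1)$ gives $\frac{2-\theta_i}{\eta_i}\ge\frac{1}{\eta_i}\ge L_i+\frac{2(H+D)\beta\Pi\parallel\mathbf{A}\parallel^2}{\alpha(t_{m+2}-1)}$, so each coefficient is $\le -\frac{(H+D)\beta\Pi\parallel\mathbf{A}\parallel^2}{\alpha_i(t^{(n)}_m)\alpha(t_{m+2}-1)}$, and using \eqref{17-5} (i.e.\ $\alpha_i(t^{(n)}_m)/\alpha(t_{m+2}-1)\le\Pi$) this is $\le -\frac{(H+D)\beta\parallel\mathbf{A}\parallel^2}{\alpha^2(t_{m+2}-1)}$. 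Summed over $n$ and $i$, Proposition~\ref{la2} then lets me absorb both $\parallel\mathbf{x}(t_{m+1})-\mathbf{x}^{\mathrm{d}}(t_{m+1})\parallel^2$ (weight $D$) and $\parallel\mathbf{x}(t_{m+1})-\mathbf{x}(t_m)\parallel^2$ (weight $H$) against the $(H+D)$ budget. The inner-product cross term $\frac{1}{\beta}\langle\bm{\lambda}^*-\frac{\beta\mathbf{A}\mathbf{x}^{\mathrm{d}}(t_m)}{\alpha(t_{m+1}-1)},\cdot\rangle$, after telescoping, I would handle by the standard identity $\langle a-b,\,c-d\rangle$-type manipulation that produces a difference of squared norms $\frac{1}{2\beta}\parallel\frac{\beta\mathbf{A}\mathbf{x}(t_m)}{\alpha(t_m-1)}-\bm{\lambda}^*\parallel^2$ telescoping down to the $m=1$ term in \eqref{c1}, plus a leftover $\parallel\mathbf{A}\mathbf{x}(t_{m+1})-\mathbf{A}\mathbf{x}^{\mathrm{d}}(t_{m+1})\parallel^2$-type remainder that is again controlled by the quadratic budget above (this is where the $m=0$ boundary term $\sum_{i}\sum_{n=1}^{P_{i,0}}\frac{D\beta\parallel\mathbf{A}\parallel^2}{\alpha^2(t_2-1)}\parallel\mathbf{x}_i(t_0^{(n+1)})-\mathbf{x}_i(t_0^{(n)})\parallel^2$ in \eqref{c1} enters, coming from the very first slot's delayed-state discrepancy). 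Finally, the term $\sum_n\frac{\theta_i(t^{(n)}_m)}{\alpha_i(t^{(n)}_m)}\langle\frac{\beta\mathbf{A}\mathbf{x}^{\mathrm{d}}(t_m)}{\alpha(t_{m+1}-1)},\mathbf{A}_i\mathbf{x}_i^*\rangle$: using \eqref{lp1}, $\sum_{n=1}^{P_{i,m}}\frac{\theta_i(t^{(n)}_m)}{\alpha_i(t^{(n)}_m)}=1$, and since $\mathbf{A}\mathbf{x}^*=\mathbf{0}$ while $\sum_{i}\mathbf{A}_i\mathbf{x}_i^*=\mathbf{A}\mathbf{x}^*=\mathbf{0}$, summing over $i$ kills this term entirely.

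Collecting everything yields
$\frac{1}{\alpha(t_{K+1}-1)}\big(F(\mathbf{x}(t_{K+1}))-F(\mathbf{x}^*)+\langle\bm{\lambda}^*,\mathbf{A}\mathbf{x}(t_{K+1})\rangle\big)\le\Delta_1$ with $\Delta_1$ exactly as in \eqref{c1}. From here the endgame is the classical penalty-method argument: writing $\mathbf{A}\mathbf{x}(t_{K+1})=:\mathbf{r}$, the saddle-point inequality \eqref{sd1} gives $F(\mathbf{x}(t_{K+1}))-F(\mathbf{x}^*)\ge-\langle\bm{\lambda}^*,\mathbf{r}\rangle\ge-\parallel\bm{\lambda}^*\parallel\parallel\mathbf{r}\parallel$, so substituting into the displayed bound and rearranging gives a quadratic inequality in $\parallel\mathbf{r}\parallel/\alpha(t_{K+1}-1)$ of the form $\frac{\beta}{2}s^2-\parallel\bm{\lambda}^*\parallel s-\Delta_1\le 0$ (the $\frac{\beta}{2}s^2$ coming from the penalty structure $F+\frac{\beta}{2\alpha}\parallel\mathbf{A}\mathbf{x}\parallel^2$ style completion of squares implicit in the Lagrangian terms), whose positive root bound gives $s\le\Delta_2$, i.e.\ \eqref{f3}. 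Feeding \eqref{f3} back into the bound on $F(\mathbf{x}(t_{K+1}))-F(\mathbf{x}^*)$ and using $|\langle\bm{\lambda}^*,\mathbf{A}\mathbf{x}(t_{K+1})\rangle|\le\parallel\bm{\lambda}^*\parallel\Delta_2\alpha(t_{K+1}-1)$ yields the two-sided bound \eqref{f2}. The $\mathcal{O}(1/K)$ decay is then immediate from \eqref{17-1}, which gives $\alpha(t_{K+1}-1)=\Theta(1/K)$.

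\textbf{Main obstacle.} The delicate part is the bookkeeping of the cross/inner-product term after telescoping: one must correctly identify which remainder pieces ($\parallel\mathbf{A}\mathbf{x}(t_{m+1})-\mathbf{A}\mathbf{x}^{\mathrm{d}}(t_{m+1})\parallel^2$ versus $\parallel\mathbf{A}\mathbf{x}(t_{m+1})-\mathbf{A}\mathbf{x}(t_m)\parallel^2$) are generated, verify they are jointly dominated by the single quadratic budget $(H+D)\beta\parallel\mathbf{A}\parallel^2/\alpha^2(t_{m+2}-1)$ supplied by \eqref{th13}, and ensure the slot-boundary index conventions \eqref{pan2}--\eqref{pan3}, \eqref{e1}--\eqref{e5+1}, and \eqref{e9}--\eqref{e8+1} make the telescoping align exactly so that only the $m=1$ data (and the $m=0$ historical-state term) survive. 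Getting the constant $\Pi$ from \eqref{17-5} to propagate correctly through these estimates — so that the final constants match \eqref{c1}--\eqref{c2} — is the most error-prone step.
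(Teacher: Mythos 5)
Your proposal follows essentially the same route as the paper's proof: summing Lemma \ref{z} over agents, telescoping over slots under the synchronization supplied by Lemmas \ref{lm3} and \ref{lpl}, killing the $\langle\bm{\lambda}^{\mathrm{d}},\mathbf{A}_i\mathbf{x}_i^*\rangle$ term via $\mathbf{A}\mathbf{x}^*=\mathbf{0}$, converting the cross term by the four-point polarization identity into a telescoping $\frac{1}{2\beta}\parallel\bm{\lambda}(t_m)-\bm{\lambda}^*\parallel^2$ difference plus a remainder absorbed through Proposition \ref{la2}, (\ref{17-5}) and (\ref{th13}), and closing with the standard saddle-point/penalty argument. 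The only cosmetic caveat is that your quadratic inequality $\frac{\beta}{2}s^2-\parallel\bm{\lambda}^*\parallel s-\Delta_1\le 0$ should retain the constant $\frac{\parallel\bm{\lambda}^*\parallel^2}{2\beta}$ (equivalently, argue as the paper does via $\frac{\beta}{\alpha}\parallel\mathbf{A}\mathbf{x}\parallel\le\parallel\bm{\lambda}(t_{K+1})-\bm{\lambda}^*\parallel+\parallel\bm{\lambda}^*\parallel\le\sqrt{2\beta\Delta_1}+\parallel\bm{\lambda}^*\parallel$) so that the root bound yields exactly $\Delta_2$ rather than a slightly larger constant.
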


\begin{proof}
See Appendix \ref{tp1}.
\end{proof}

\begin{Remark}\label{re1}
{Theorem \ref{th1}} provides a sufficient condition of the convergence of the Asyn-PPG algorithm, which is characterized by the initial state of all the time slots and results in a periodic convergence result with period length $H$ (see some similar periodic convergence results in \cite{tseng1991rate,zhou2018distributed}).
\end{Remark}

To achieve the result of {Theorem \ref{th1}}, we need to choose a suitable $\eta_i(t^{(n)}_m)$ which is located in the space determined by (\ref{th14+1}), (\ref{llp}) and (\ref{th13}) adaptively. In the following, we investigate the step-size $\eta_i(t^{(n)}_m)$ in the form of
\begin{align}\label{16}
& \frac{1}{\eta_i(t^{(n)}_m)} = P _{i,m} ( Q_{m} + \frac{2(H+D)\beta \Pi \parallel \mathbf{A} \parallel^2}{\alpha_i(t_{m+2}-1)} ),
\end{align}
where $Q_{m}$, $\alpha_i(t_{m+2}-1)$ and $\beta$ are to be determined ($P_{i,0}$ and $Q_0$ are defined to initialize $\eta_i$).

\begin{Lemma}\label{l5}
Suppose that (\ref{th12+1}), (\ref{1}), and (\ref{lp1}) hold. Let the step-size be in the form of (\ref{16}) and $Q_{m} \geq L^{\mathrm{g}}$ with $L^{\mathrm{g}} = \max_{j \in \mathcal{V}} L_j$. Then, (\ref{llp}) and (\ref{th13}) hold. In addition, $\forall i \in \mathcal{V}$, $m \in \mathbb{N}_+$, $n \in \mathcal{P}^m_i$, we have
\begin{align}\label{151}
 & \frac{\theta_i(t^{(n)}_m)}{\alpha_i(t^{(n)}_m)\eta_i(t^{(n)}_m)} - \frac{\theta_i(t^{(n-1)}_{m})} {\alpha_i(t^{(n-1)}_{m})\eta_i(t^{(n-1)}_{m})} \nonumber \\
  &  \leq  \max \{ 0,Q_m -  Q_{m-1} \nonumber \\
  &   \quad \quad \quad \quad \quad \quad + 2(H+D)\beta \Pi \parallel \mathbf{A} \parallel^2\}.
\end{align}
\end{Lemma}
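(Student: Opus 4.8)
The plan is to reduce everything to the closed‑form expressions already available from Lemma \ref{lm3} together with the step‑size ansatz (\ref{16}). Since the hypotheses (\ref{th12+1}), (\ref{1}) and (\ref{lp1}) are exactly those of Lemma \ref{lm3}, that lemma applies: the sequences $\{\alpha_i(t_m-1)\}_{m\in\mathbb{N}_+}$ are synchronized, so $\alpha_i(t_{m+2}-1)=\alpha(t_{m+2}-1)$ for all $i$, and moreover $\tfrac1{\alpha(t_m-1)}=(m-1)+\tfrac1{\alpha(t_1-1)}$ by (\ref{17-1}). Substituting this into (\ref{16}) shows that the step‑size is slot‑uniform: $\tfrac1{\eta_i(t^{(n)}_m)}=P_{i,m}\big(Q_m+\tfrac{2(H+D)\beta\Pi\parallel\mathbf{A}\parallel^2}{\alpha(t_{m+2}-1)}\big)$, which is independent of the intra‑slot index $n$ and depends on the agent only through $P_{i,m}$. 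I will abbreviate $R_m:=Q_m+\tfrac{2(H+D)\beta\Pi\parallel\mathbf{A}\parallel^2}{\alpha(t_{m+2}-1)}$, which is positive and agent‑independent.

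For (\ref{llp}): using the equivalent representations (\ref{e5}) and (\ref{e5+1}) I identify $\eta_i(t_m-1)=\eta_i(t^{(P_{i,m-1})}_{m-1})$, and then the simplified (\ref{16}) (with $m\mapsto m-1$, $n=P_{i,m-1}$) gives $\tfrac1{\eta_i(t_m-1)}=P_{i,m-1}R_{m-1}$. Since $R_{m-1}$ does not depend on the agent, $\tfrac{\eta_i(t_m-1)}{\eta_j(t_m-1)}=\tfrac{P_{j,m-1}}{P_{i,m-1}}$, which is (\ref{llp}). For (\ref{th13}): from $P_{i,m}\ge 1$, $Q_m\ge L^{\mathrm{g}}\ge L_i$ and positivity of the penalty term, $\tfrac1{\eta_i(t^{(n)}_m)}=P_{i,m}R_m\ge R_m\ge L_i+\tfrac{2(H+D)\beta\Pi\parallel\mathbf{A}\parallel^2}{\alpha(t_{m+2}-1)}$, which is exactly (\ref{th13}).

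For (\ref{151}): combining (\ref{lp1}) with the simplified (\ref{16}) yields $\tfrac{\theta_i(t^{(n)}_m)}{\alpha_i(t^{(n)}_m)\eta_i(t^{(n)}_m)}=\tfrac1{P_{i,m}}\cdot\tfrac1{\eta_i(t^{(n)}_m)}=R_m$, again independent of $n$ and $i$. If $n\ge 2$, both $t^{(n)}_m$ and $t^{(n-1)}_m$ lie in slot $m$, so the left side of (\ref{151}) is $R_m-R_m=0\le\max\{0,\,\cdot\,\}$. If $n=1$, the boundary convention (\ref{pan3}) gives $t^{(0)}_m=t^{(P_{i,m-1})}_{m-1}$, so the subtracted term equals $R_{m-1}$ and the left side is $R_m-R_{m-1}=Q_m-Q_{m-1}+2(H+D)\beta\Pi\parallel\mathbf{A}\parallel^2\big(\tfrac1{\alpha(t_{m+2}-1)}-\tfrac1{\alpha(t_{m+1}-1)}\big)$; by the identity $\tfrac1{\alpha(t_m-1)}=(m-1)+\tfrac1{\alpha(t_1-1)}$ the difference of reciprocals is exactly $1$, whence $R_m-R_{m-1}=Q_m-Q_{m-1}+2(H+D)\beta\Pi\parallel\mathbf{A}\parallel^2\le\max\{0,\,\cdot\,\}$. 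This covers all cases. The only genuinely delicate part is the slot‑boundary bookkeeping — correctly matching "$\eta_i(t_m-1)$", "$t^{(0)}_m$" and the slot‑$(m{-}1)$ versus slot‑$m$ parameter values via Propositions \ref{pp1} and \ref{pp2}, and the initialization conventions for $P_{i,0},Q_0$; once the explicit formulas from Lemma \ref{lm3} are in place, the remaining algebra is routine.
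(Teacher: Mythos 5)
Your proposal is correct and follows essentially the same route as the paper's own proof: apply Lemma \ref{lm3} to synchronize $\alpha_i(t_{m+2}-1)=\alpha(t_{m+2}-1)$, observe via (\ref{lp1}) and (\ref{16}) that $\tfrac{\theta_i(t^{(n)}_m)}{\alpha_i(t^{(n)}_m)\eta_i(t^{(n)}_m)}$ collapses to the slot-wise constant $Q_m+\tfrac{2(H+D)\beta\Pi\parallel\mathbf{A}\parallel^2}{\alpha(t_{m+2}-1)}$, split into the cases $n\ge 2$ (difference zero) and $n=1$ (difference of reciprocals equal to $1$ by (\ref{17-1})), and handle the slot boundary with Proposition \ref{pp1}. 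The abbreviation $R_m$ is a cosmetic difference only; the verification of (\ref{llp}) and (\ref{th13}) also matches the paper's computation.
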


\begin{proof}
See Appendix \ref{l5p}.
\end{proof}


\begin{Theorem}\label{la1}
In the proposed SAN, suppose that {Assumptions \ref{a0}} to {\ref{a5}}, (\ref{th12+1}), (\ref{1}), and (\ref{lp1}) hold and $\parallel \mathbf{A}  \parallel \neq 0$. The step-size is selected based on (\ref{16}). Then, by Algorithm \ref{a1x}, given that

1) there exist a $K \in \mathbb{N}_+$ and an $\epsilon>0$, such that
\begin{align}\label{27}
K \geq \frac{1}{\epsilon} - \frac{1}{\alpha(t_{1}-1)},
\end{align}

2) there exists a $Q_{m}$, such that
\begin{align}
& Q_{m} \geq L^{\mathrm{g}}, \label{33}\\
& Q_{m} - Q_{m-1} <  \frac{\mu}{H}, \label{35}
\end{align}
with $\mu=\min_{j \in \mathcal{V}} \mu_j$, and

3) $\beta$ is chosen as
\begin{align}\label{32-1}
 \beta \in (0, & \frac{ \frac{\mu}{H} - \max_{k \in \mathbb{N}_+} (Q_{k} - Q_{k-1})}{2(H+D) \Pi \parallel \mathbf{A} \parallel^2}],
\end{align}
for any $({\mathbf{x}}^*,{\bm{\lambda}}^*) \in \mathcal{X}$, we have
\begin{align}
& \mid  F(\mathbf{x}(t_{K+1})) - F(\mathbf{x}^*) \mid  \leq \epsilon (\Delta_1 + \Delta_2 \parallel \bm{\lambda}^* \parallel),  \label{22+1} \\
& \parallel \mathbf{A}\mathbf{x}(t_{K+1}) \parallel \leq \epsilon \Delta_2,\label{22+2}
\end{align}
where $\Delta_1$ and $\Delta_2$ take on the forms of (\ref{c1}) and (\ref{c2}), respectively.
Moreover, from a start position to $\mathbf{x}(t_{K+1})$, the convergence rate is given by
\begin{align}
& \mid  F(\mathbf{x}(t_{K+1})) - F(\mathbf{x}^*) \mid \leq \mathcal{O}(\frac{1}{K}), \label{23} \\ & \parallel \mathbf{A}\mathbf{x}(t_{K+1}) \parallel \leq \mathcal{O}(\frac{1}{K}). \label{24}
\end{align}
\end{Theorem}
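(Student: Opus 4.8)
The plan is to obtain Theorem~\ref{la1} as a specialization of Theorem~\ref{th1}: the step-size family~(\ref{16}) is a concrete instance of the abstract step-size requirements there, so the first task is to check that hypotheses 1)--3) force the premises of Theorem~\ref{th1} — namely (\ref{th14+1}), (\ref{llp}) and (\ref{th13}) — after which the conclusions (\ref{f2})--(\ref{f3}) can be invoked and then quantified using the closed-form decay of $\alpha(t_m-1)$ from Lemma~\ref{lm3}. Conditions (\ref{llp}) and (\ref{th13}) should come essentially for free: under (\ref{th12+1}), (\ref{1}), (\ref{lp1}), the step-size (\ref{16}), and $Q_m\ge L^{\mathrm g}\ge L_i$ (hypothesis~(\ref{33})), Lemma~\ref{l5} asserts exactly these two; here one also uses that Lemma~\ref{lm3} makes $\{\alpha_i(t_m-1)\}$ synchronized, so $\alpha_i(t_{m+2}-1)=\alpha(t_{m+2}-1)$, and $P_{i,m}\ge 1$ fixes the direction of the inequality in (\ref{th13}).

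The one genuinely delicate verification is (\ref{th14+1}). Using (\ref{lp1}) to write $\theta_i(t^{(n)}_m)\mu_i/\alpha_i(t^{(n)}_m)=\mu_i/P_{i,m}$, condition (\ref{th14+1}) is equivalent to
\[
\frac{\theta_i(t^{(n)}_m)}{\alpha_i(t^{(n)}_m)\eta_i(t^{(n)}_m)} - \frac{\theta_i(t^{(n-1)}_m)}{\alpha_i(t^{(n-1)}_m)\eta_i(t^{(n-1)}_m)} \le \frac{\mu_i}{P_{i,m}},
\]
whose left-hand side is bounded, by the estimate (\ref{151}) of Lemma~\ref{l5}, by $\max\{0,\,Q_m-Q_{m-1}+2(H+D)\beta\Pi\|\mathbf A\|^2\}$. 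Since $\mu_i\ge\mu$ and $P_{i,m}\le H$ give $\mu_i/P_{i,m}\ge\mu/H$, it suffices that this maximum not exceed $\mu/H$: the branch $0\le\mu/H$ is trivial, and the other branch reads $\beta\le(\mu/H-\max_k(Q_k-Q_{k-1}))/(2(H+D)\Pi\|\mathbf A\|^2)$, i.e. precisely the interval (\ref{32-1}), which is non-empty thanks to (\ref{35}). This would close the verification of the premises of Theorem~\ref{th1}.

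Theorem~\ref{th1} then yields (\ref{f2})--(\ref{f3}), so it only remains to estimate $\alpha(t_{K+1}-1)$. From (\ref{17-1}), $1/\alpha(t_{K+1}-1)=K+1/\alpha(t_1-1)$; hypothesis (\ref{27}) then gives $1/\alpha(t_{K+1}-1)\ge 1/\epsilon$, hence $\alpha(t_{K+1}-1)\le\epsilon$, and substituting into (\ref{f2})--(\ref{f3}) produces (\ref{22+1})--(\ref{22+2}). For the rate, the same identity gives $\alpha(t_{K+1}-1)\le 1/K$; since $\Delta_1,\Delta_2$ in (\ref{c1})--(\ref{c2}) are built solely from the initial data (slots $0$ and $1$) and are independent of $K$, the right-hand sides of (\ref{f2})--(\ref{f3}) become $\mathcal O(1/K)$, which is (\ref{23})--(\ref{24}).

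The main obstacle is the bookkeeping rather than any deep inequality: one must make sure the equivalent-representation identities (\ref{e6}), (\ref{e7}), (\ref{e9}) correctly reconcile the boundary index $n=1$, whose ``$n-1$'' term belongs to slot $m-1$, so that the estimate (\ref{151}) applies uniformly over $n\in\mathcal P^m_i$ and for every $m\in\mathbb N_+$; and one must confirm that the synchronization $\alpha_i(t_m-1)=\alpha(t_m-1)$ from Lemma~\ref{lm3} legitimately collapses the per-agent bounds into the slot-wide quantities appearing in (\ref{22+1})--(\ref{24}). With those checks in place the remaining argument is a direct substitution.
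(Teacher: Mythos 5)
Your proposal is correct and follows essentially the same route as the paper's own proof: it reduces Theorem~\ref{la1} to Theorem~\ref{th1} by using Lemma~\ref{l5} to secure (\ref{llp}) and (\ref{th13}), rewrites (\ref{th14+1}) via (\ref{lp1}) into the conservative bound $\mu/H$ and combines it with (\ref{151}) to obtain exactly the conditions (\ref{35}) and (\ref{32-1}), and then converts (\ref{f2})--(\ref{f3}) into (\ref{22+1})--(\ref{24}) through the identity $1/\alpha(t_{K+1}-1)=K+1/\alpha(t_1-1)$ from (\ref{17-1}). The only cosmetic difference is that the paper derives (\ref{27}) by starting from $\alpha(t_{K+1}-1)\le\epsilon$ and solving for $K$, whereas you run the same computation in the opposite direction.
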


\begin{proof}
See Appendix \ref{tp2}.
\end{proof}

\begin{Remark}
To determine $Q_m$ by (\ref{33}) and (\ref{35}), one can choose a uniform $Q_0 = ... = Q_m = ... \geq L^{\mathrm{g}}$, $m \in \mathbb{N}$, such that (\ref{33}) and (\ref{35}) hold at all times and $ \beta \in (0, \frac{ \mu}{2H(H+D) \Pi \parallel \mathbf{A} \parallel^2}]$. Alternatively, a varying $Q_m$ means that, in slot $m$, one can choose $Q_m \in [L^{\mathrm{g}} , Q_{m-1} + \frac{\mu}{H})$, which is non-empty if $Q_{m-1} \geq L^{\mathrm{g}}$. That means, given that $Q_0 \geq L^{\mathrm{g}}$, $Q_m$ can be determined by (\ref{33}) and (\ref{35}) throughout the whole process. In the trivial case that $\parallel \mathbf{A} \parallel = 0$, as seen from Algorithm \ref{a1x}, $\beta$ can be chosen in $\mathbb{R}$.

\end{Remark}

\subsection{Distributed Realization of Asyn-PPG Algorithm}\label{rm2}

In some large-scale distributed networks, directly implementing Algorithm \ref{a1x} can be restrictive in the sense that each agent needs to access the full state information, which can be unrealizable if the communication networks are not fully connected \cite{pang2019randomized}. To overcome this issue, a promising solution is to establish a central server responsible for collecting, storing and distributing the necessary information of the system (as discussed in \cite{zhou2018distributed,ho2013more,li2014scaling}), which can also effectively relieve the storage burden of the historical data for the agents. In such a system, each agent pushes its state information, e.g., $\mathbf{x}_i(t)$, into the server and pulls the historical information, e.g., $\mathbf{x}^{\mathrm{d}}(t_m)$, from the server due to the delays between the agent side and the server.

As another distributed realization, we consider a composite objective function ${F}(\mathbf{x})=\sum_{i \in \mathcal{V}} {F}_i(\mathbf{x}_i)$ without any coupling constraint, where the agents aim to achieve an agreement on the optimal solution to $\min_{\mathbf{x}} {F}(\mathbf{x})$ by optimizing private functions ${F}_i(\mathbf{x}_i)$, $\forall i\in \mathcal{V}$. To this end, we can apply graph theory and consensus protocol $\mathbf{x}_1 = ... = \mathbf{x}_{N}$ if $\mathcal{G}$ is connected. In this case, $\mathbf{A}$ can be designed as ($\mathbf{A}_{ei}$ is the $(e,i)$th sub-block of $\mathbf{A}$)
\begin{align}\label{}
\mathbf{A}_{e \in {\mathcal{E}},i \in \mathcal{V}} = \left\{\begin{array}{cc}
                 \mathbf{I}_M & \hbox{if }e = (i,j) \hbox{ $\&$ } i < j, \\
                 -\mathbf{I}_M & \hbox{if }e = (i,j) \hbox{ $\&$ } i  > j, \\
                 \mathbf{O}_{M \times M} & \hbox{otherwise,}
               \end{array}
               \right.
\end{align}
which is an augmented incidence matrix of $\mathcal{G}$ \cite{dimarogonas2010stability}. It can be checked that the consensus of $\mathbf{x}$ can be defined by $\mathbf{A}\mathbf{x}=\mathbf{0}$.  Then, we can have $\mathbf{W} = \mathbf{A}^{\top} \mathbf{A} = \mathbf{L} \otimes \mathbf{I}_M \in \mathbb{R}^{NM \times NM }$ with $\mathbf{W}_{ij} \neq \mathbf{O}_{M \times M}$ only if $(i,j) \in \mathcal{E}$ or $i = j$ \cite{sun2017distributed}. Hence, the updating of $\mathbf{x}_i$ in $\mathcal{T}_i$ can be written as
\begin{align}\label{}
         \mathbf{x}_i(t^{(n)}_{m}+1) = & \mathrm{prox}_{h_i}^{\eta_i(t^{(n)}_m)} (\mathbf{x}_i(t^{(n)}_m) - \eta_i(t^{(n)}_m) (\nabla f_i(\mathbf{x}_i(t^{(n)}_m)) \nonumber \\
        + & \frac{\beta }{\alpha_i (t_{m+1}-1)} \sum_{j\in \mathcal{V}_i \cup \{i\} }\mathbf{W}_{ij}\mathbf{x}_j^{\mathrm{d}}(t_m))),
\end{align}
which means the agent only needs to access the delayed information from neighbours. See an example in simulation A.

\section{Numerical Simulation}\label{se6}

In this section, we discuss two motivating applications of the proposed Asyn-PPG algorithm.
\subsection{Consensus Based Distributed LASSO Problem}\label{ss1}

\begin{figure}[htbp]
  \centering
  \includegraphics[height=3.5cm,width=4.5cm]{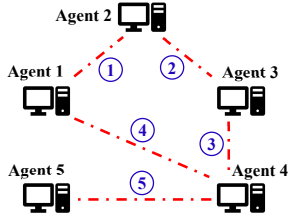}\\
  \caption{Communication typology of the 5-agent SAN.}\label{gr}
\end{figure}

In this subsection, the feasibility of the Asyn-PPG algorithm will be demonstrated by solving a consensus based distributed LASSO problem in a connected and undirected 5-agent SAN ${\mathcal{G}} = \{{\mathcal{V}}, {\mathcal{E}} \}$. The communication topology is designed in Fig. \ref{gr}.

In this problem, the global cost function is considered as $\widetilde{F}_A(\tilde{\mathbf{x}})= \frac{1}{2} \sum_{i \in \mathcal{V}} \parallel \mathbf{P}_i\tilde{\mathbf{x}} - \mathbf{q}_i \parallel^2 + \varrho \parallel \tilde{\mathbf{x}} \parallel_1$, $\tilde{\mathbf{x}} \in \mathbb{R}^5$, $\varrho >0$. To realize a consensus based distributed computation fashion, inspired by \cite{bazerque2010distributed}, the local cost function of agent $i$ is designed as $F_{A,i} (\mathbf{x}_i) = \frac{1}{2} \parallel \mathbf{P}_i\mathbf{x}_i - \mathbf{q}_i \parallel^2 + \frac{\varrho}{|\mathcal{V}|} \parallel \mathbf{x}_i \parallel_1$, $\mathbf{x}_i \in \mathbb{R}^5$. The idea of generating the data follows the method introduced in \cite{parikh2014proximal}. Firstly, we generate a $(5 \times 5)$-dimensional matrix $\mathbf{P}'_i$, where each element is generated by a normal distribution $\mathcal{N}(0,1)$. Then, normalize the columns of $\mathbf{P}'_i$ to have $\mathbf{P}_i \in \mathbb{R}^{5 \times 5}$. $\mathbf{q}_i$ is generated by $\mathbf{q}_i=\mathbf{P}_i\hat{\mathbf{x}}_i+\bm{\delta}_i$, where $\hat{\mathbf{x}}_i \in \mathbb{R}^{5}$ is certain given vector and $\bm{\delta}_i \sim \mathcal{N}(\mathbf{0},10^{-3}\mathbf{I}_5)$ is an additive noise, $\forall i \in {\mathcal{V}}$.

Then, the consensus based distributed LASSO problem can be formulated as the following linearly constrained optimization problem:
\begin{align}\label{sta}
\hbox{\textbf{(P2)}}: \quad \min_{\mathbf{x}} \quad &  F_A(\mathbf{x})=\sum_{i \in \mathcal{V}} F_{A,i} (\mathbf{x}_i) \nonumber \\
\hbox{subject to} \quad & \mathbf{M} \mathbf{x}=\mathbf{0},
\end{align}
where $ \mathbf{M}$ is generated by the method introduced in section \ref{rm2}, i.e.,
\begin{align}\label{}
 \mathbf{M} = \left(
            \begin{array}{ccccc}
              \mathbf{I}_5 & -\mathbf{I}_5 & \mathbf{O}_{5 \times 5}  & \mathbf{O}_{5 \times 5}  &   \mathbf{O}_{5 \times 5} \\
               \mathbf{O}_{5 \times 5} & \mathbf{I}_5 & -\mathbf{I}_5 & \mathbf{O}_{5 \times 5}  &  \mathbf{O}_{5 \times 5} \\
               \mathbf{O}_{5 \times 5} & \mathbf{O}_{5 \times 5}  & \mathbf{I}_5 & -\mathbf{I}_5 & \mathbf{O}_{5 \times 5}  \\
              \mathbf{I}_5 &  \mathbf{O}_{5 \times 5} & \mathbf{O}_{5 \times 5}  & -\mathbf{I}_5 & \mathbf{O}_{5 \times 5}  \\
              \mathbf{O}_{5 \times 5}  & \mathbf{O}_{5 \times 5}  & \mathbf{O}_{5 \times 5}  & \mathbf{I}_5 & -\mathbf{I}_5 \\
            \end{array}
          \right) ,
\end{align}
and $\mathbf{x}=(\mathbf{x}^{\top}_1,\mathbf{x}^{\top}_2,...,\mathbf{x}^{\top}_{5})^{\top}$. 

\subsubsection{{Simulation Setup}}
The width of time slots is set as $H=10$ and the upper bound of communication delays is set as $D=2$. To represent the ``worst delays'', we let $\tau (t_m) = t_m- D$, $\forall m \in \mathbb{N}_+$. In slot $m$, the update frequency of agent $i$ is chosen from $P_{i,m} \in \{1,2,...,H\}$, and the action instants are randomly determined. $\varrho$ is set as $10$. Other settings for $\alpha_i$, $\eta_i$ and $\beta$ are consistent with the conditions specified in {Theorem 2}, $i \in \mathcal{V}$. To show the dynamics of the convergence error, we let $\mathbf{x}^*$ be the optimal solution to Problem (P2) and define $\gamma_A(t)=  | F_A(\mathbf{x}(t)) - F_A(\mathbf{x}^*) |$, $t \in \mathcal{T}$.

\subsubsection{{Simulation Result}}
By Algorithm \ref{a1x}, the simulation result is shown in Figs. \ref{g2}-(a) to \ref{g2}-(c).
The action clock of the agents is depicted in Fig. \ref{g2}-(a). 
By performing Algorithm \ref{a1x}, Fig. \ref{g2}-(b) shows the dynamics of decision variables of all the agents. It can be noted that all the trajectories of the agents converge to $\mathbf{x}^*$. The dynamics of $\gamma_A(t)$ is shown in Fig. \ref{g2}-(c).

\begin{figure*}
\centering
\subfigure[Action clock of the agents. ``1'' represents ``action'' and ``0'' represents ``non-action''.]{
\begin{minipage}[t]{0.33\linewidth}
\centering
\includegraphics[width=6.5cm]{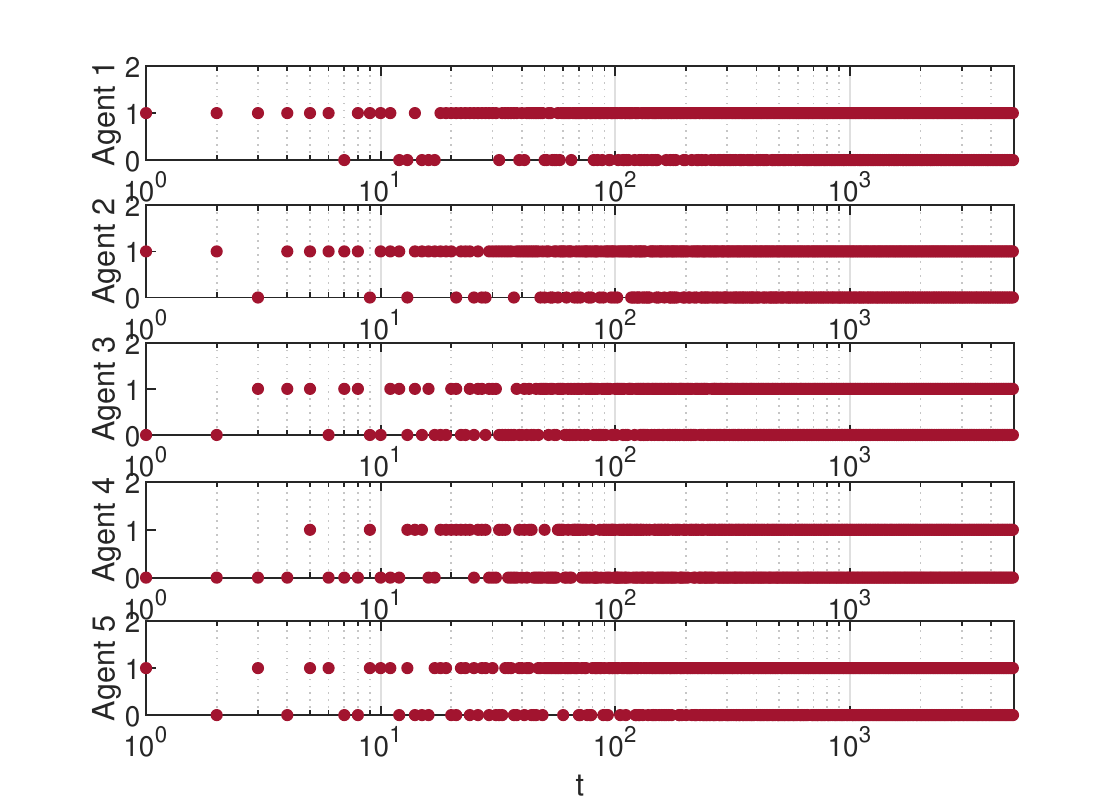}
\end{minipage}%
}%
\subfigure[Dynamics of the state of the agents.]{
\begin{minipage}[t]{0.33\linewidth}
\centering
\includegraphics[width=6.5cm]{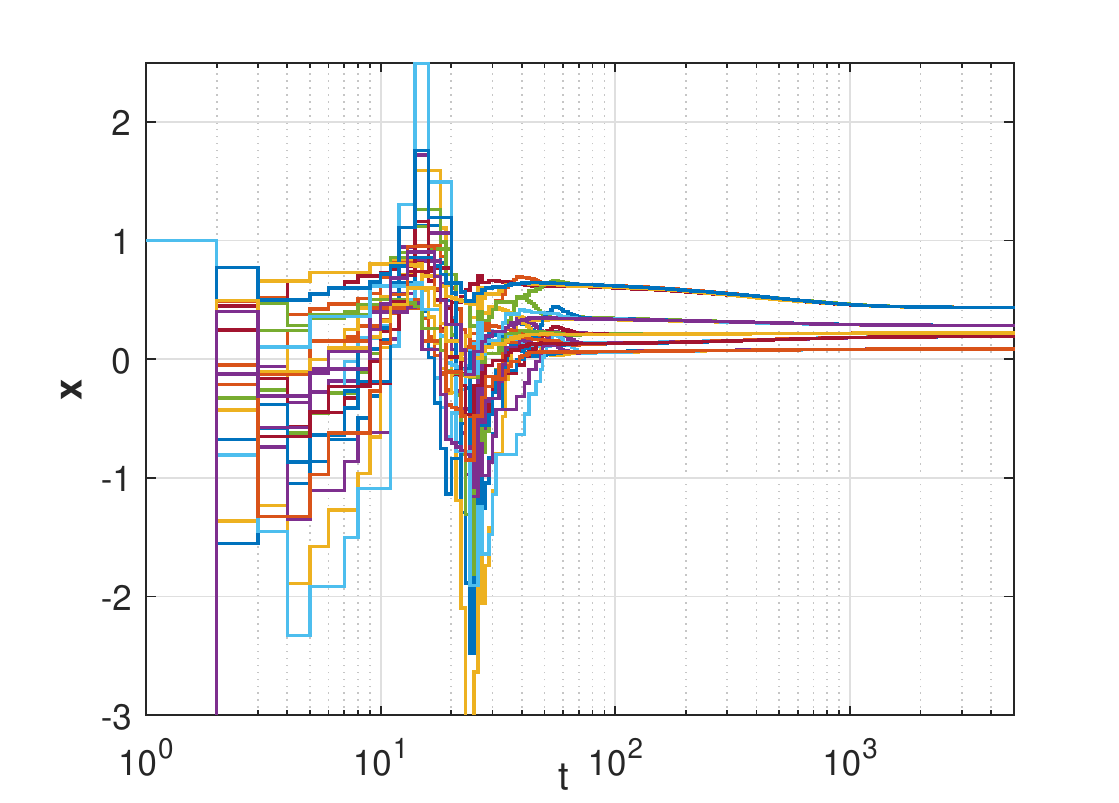}
\end{minipage}%
}%
\subfigure[Dynamics of convergence error $\gamma_A(t)$.]{
\begin{minipage}[t]{0.33\linewidth}
\centering
\includegraphics[width=6.5cm]{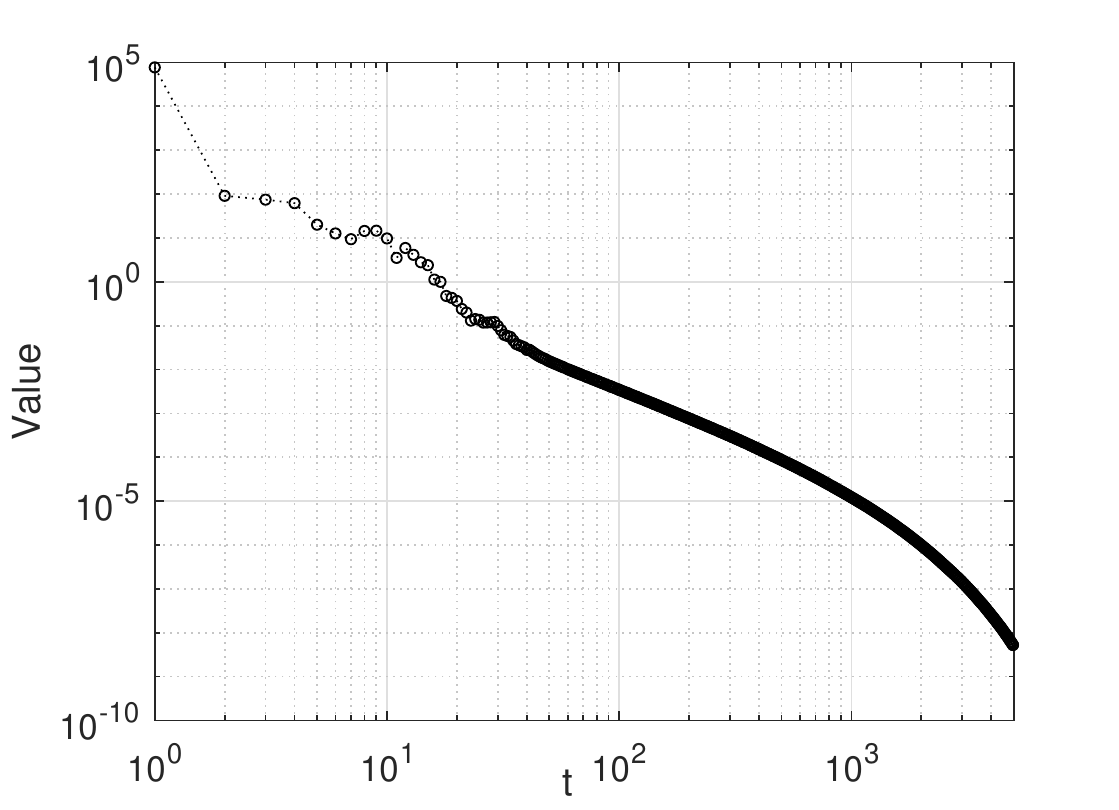}
\end{minipage}%
}%
\caption{Result of Simulation A with $H=10$ and $D=2$.}\label{g2}
\end{figure*}

\begin{figure*}
\centering
\subfigure[Action clock of UCs and users. ``1'' represents ``action'' and ``0'' represents ``non-action''.]{
\begin{minipage}[t]{0.33\linewidth}
\centering
\includegraphics[width=6.5cm]{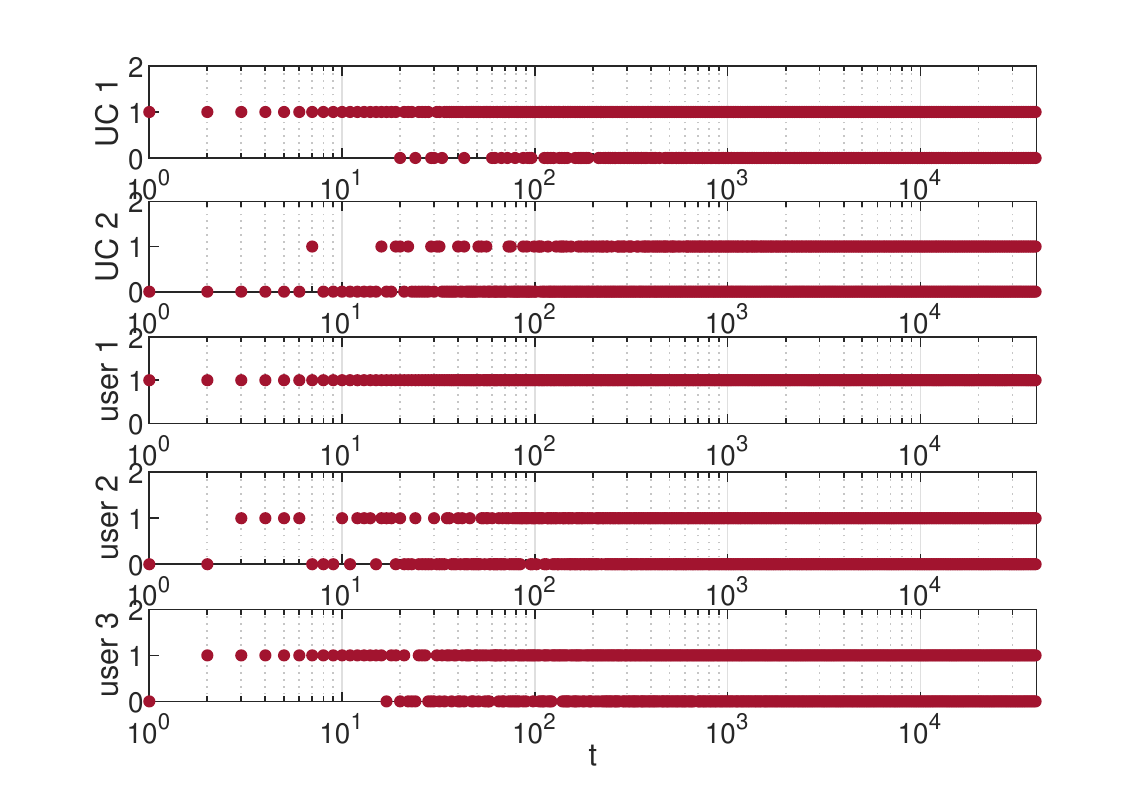}
\end{minipage}%
}%
\subfigure[Dynamics of the state of UCs and users.]{
\begin{minipage}[t]{0.33\linewidth}
\centering
\includegraphics[width=6.5cm]{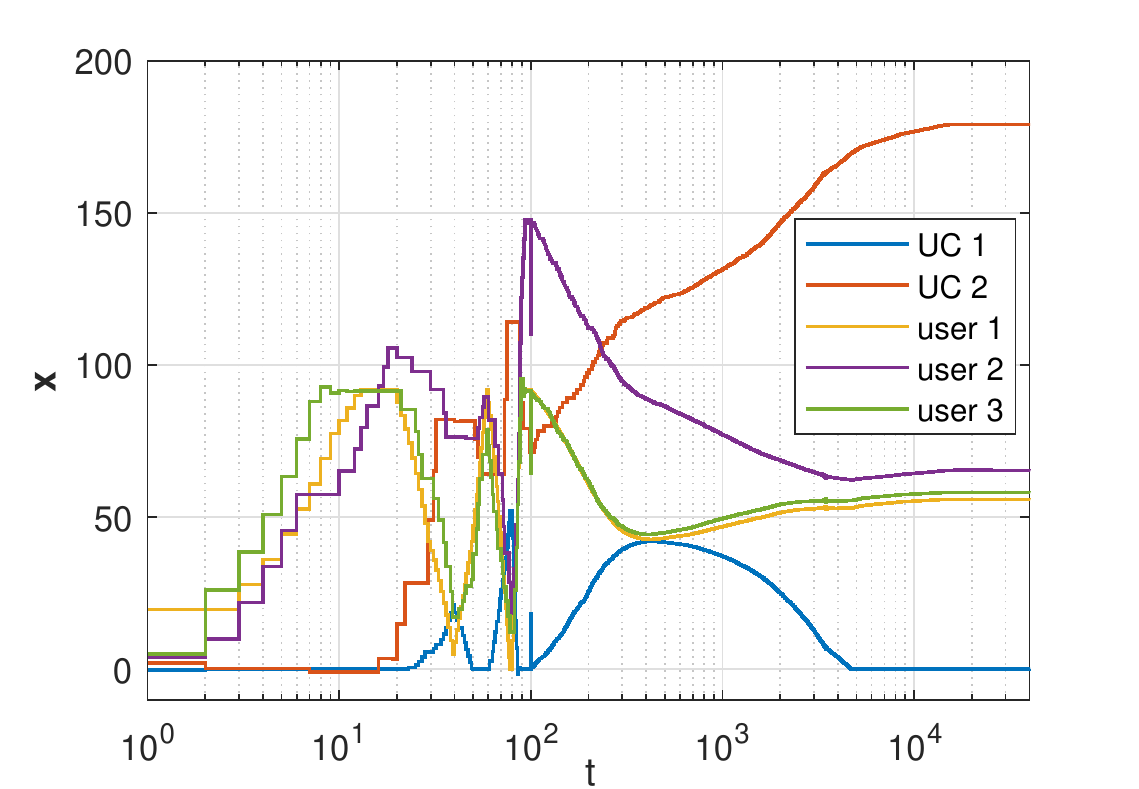}
\end{minipage}%
}%
\subfigure[Dynamics of convergence error $\gamma_B(t)$.]{
\begin{minipage}[t]{0.33\linewidth}
\centering
\includegraphics[width=6.5cm]{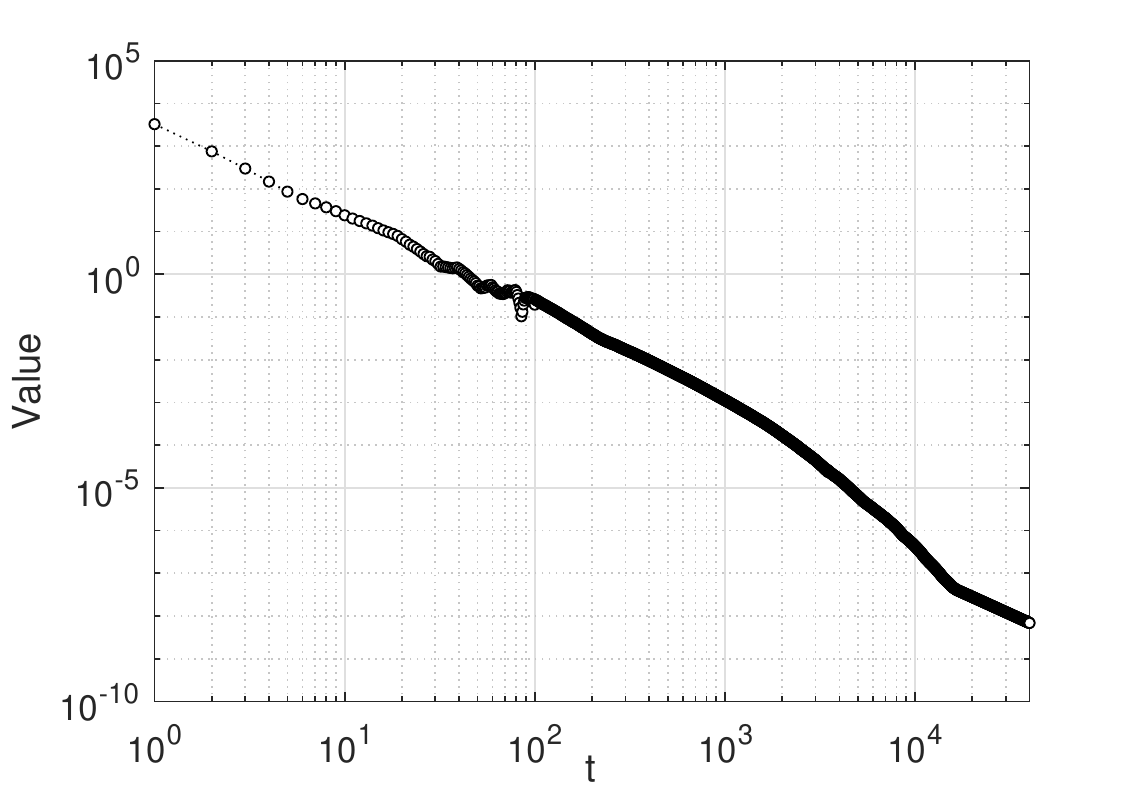}
\end{minipage}%
}%
\caption{Result of Simulation B with $H=15$ and $D=5$.}\label{g3}
\end{figure*}

\subsection{Social Welfare Optimization Problem in Electricity Market}


In this subsection, we verify the feasibility of our proposed Asyn-PPG algorithm by solving a social welfare optimization problem in the electricity market with 2 utility companies (UCs) and 3 users.

The social welfare optimization problem is formulated as
\begin{align}\label{}
 \textbf{(P3)}: \quad \min_{\mathbf{x}} \quad  &  \sum_{i \in \mathcal{V}_{{\textrm{UC}}}} C_i(x^{{\textrm{UC}}}_{i}) - \sum_{j \in \mathcal{V}_{\textrm{user}}} U_j(x^{\textrm{user}}_{j}) \nonumber\\
  \hbox{subject to} \quad & \sum_{i \in \mathcal{V}_{{\textrm{UC}}}} x^{{\textrm{UC}}}_{i} = \sum_{j \in \mathcal{V}_{\textrm{user}}} x^{\textrm{user}}_{j}, \label{s1} \\
  & x^{{\textrm{UC}}}_{i} \in [0,x^{{\textrm{UC}}}_{i,\textrm{max}}], \quad \forall i \in  \mathcal{V}_{{\textrm{UC}}} \\
  & x^{\textrm{user}}_{j} \in [0,x^{\textrm{user}}_{j,\textrm{max}}], \quad \forall j \in  \mathcal{V}_{\textrm{user}}
\end{align}
where $\mathcal{V}_{{\textrm{UC}}}$ and $\mathcal{V}_{\textrm{user}}$ are the sets of UCs and users, respectively. $\mathbf{x} = (x^{{\textrm{UC}}}_{1},...,x^{{\textrm{UC}}}_{|\mathcal{V}_{{\textrm{UC}}}|}, x^{\textrm{user}}_{1},...,x^{\textrm{user}}_{|\mathcal{V}_{\textrm{user}}|})^{\top}$ with $x^{{\textrm{UC}}}_{i}$ and $x^{\textrm{user}}_{j}$ being the quantities of energy generation and consumption of UC $i$ and user $j$, respectively. $C_i(x^{{\textrm{UC}}}_{i})$ is the cost function of UC $i$ and $U_j(x^{\textrm{user}}_{j})$ is the utility function of user $j$, $i \in \mathcal{V}_{{\textrm{UC}}}$, $j \in \mathcal{V}_{\textrm{user}}$. Constraint (\ref{s1}) ensures the supply-demand balance in the market. $x^{{\textrm{UC}}}_{i,\textrm{max}}>0$ and $x^{\textrm{user}}_{j,\textrm{max}}>0$ are the upper bounds of $x^{{\textrm{UC}}}_{i}$ and $x^{\textrm{user}}_{j}$, respectively.
The detailed expressions of $C_i(x^{{\textrm{UC}}}_{i})$ and $U_j(x^{\textrm{user}}_{j})$ are designed as \cite{pourbabak2017novel}
\begin{align}\label{}
& C_i(x^{{\textrm{UC}}}_{i}) = \kappa_i (x^{{\textrm{UC}}}_{i})^2 + \xi_i x^{{\textrm{UC}}}_{i} +\varpi_i, \nonumber  \\
& U_j(x^{\textrm{user}}_{j}) =
\left\{\begin{array}{ll}
\nu_j x^{\textrm{user}}_{j} - \varsigma_j (x^{\textrm{user}}_{j})^2, & x^{\textrm{user}}_{j} \leq \frac{\nu_j}{2\varsigma_j} \\
  \frac{\nu_j^2}{4 \varsigma_j}, & x^{\textrm{user}}_{j} > \frac{\nu_j}{2\varsigma_j} \nonumber
\end{array}
\right.
\end{align}
respectively, where $\kappa_i,\xi_i,\varpi_i,\nu_j,\varsigma_j$ are all parameters, $\forall i \in  \mathcal{V}_{{\textrm{UC}}}$, $\forall j \in  \mathcal{V}_{\textrm{user}}$.

Note that the structure of Problem (P3) can be modified into that of (P1) with the method introduced in Remark \ref{re12}.
By some direct calculations, the optimal solution to Problem (P3) can be obtained as $\mathbf{x}^* = (0, 179.1, 55.51, 65.84, 57.75)^{\top}$. Define $F_B(\mathbf{x})=  \sum_{i \in \mathcal{V}_{{\textrm{UC}}}} C_i(x^{{\textrm{UC}}}_{i}) - \sum_{j \in \mathcal{V}_{\textrm{user}}} U_j(x^{\textrm{user}}_{j})$ and $\gamma_B(t) = |F_B(\mathbf{x}(t))-F_B(\mathbf{x}^*)|$, $t \in \mathcal{T}$.

\subsubsection{{Simulation Setup}}
The parameters of this simulation are listed in Table I \cite{pourbabak2017novel}. The width of slots and the upper bound of communication delays are set as $H=15$ and $D=5$, respectively. In addition, to test the performance of the Asyn-PPG algorithm with large heterogeneity of the update frequencies, the percentages of action instants of UC 1, UC 2, user 1, user 2, and user 3 are set around $80\%$, $20\%$, $100\%$, $50\%$, and $70\%$, respectively.

\subsubsection{{Simulation Result}}

The simulation result is shown in Figs. \ref{g3}-(a) to \ref{g3}-(c). Fig. \ref{g3}-(a) shows the action clock of UCs and users. Fig. \ref{g3}-(b) shows the dynamics of the decision variables of them. The dynamics of convergence error is shown in Fig. \ref{g3}-(c). It can be seen that their states converge to the optimal solution $\mathbf{x}^*$. Notably, due to the local constraints on the variables, the optimal supply quantities of UC 1 and UC 2 reach the lower and upper bounds, respectively, and other variables converge to interior optimal positions.

\begin{table}\label{t2}
\caption{Parameters of UCs and users}
\label{tab2}
\begin{center}
\begin{tabular}{cccccccc}
\bottomrule
& \multicolumn{4}{l}{$ \quad \quad \quad \quad \quad$ \quad UCs} & \multicolumn{3}{l}{$\quad \quad \quad$ Users} \\
\hline
$i/j$ & $\kappa_i$ & $\xi_i$ & $\varpi_i$ & $x^{\textrm{UC}}_{i,\textrm{max}}$ & $\nu_j$ &  $\varsigma_j$  & $x^{\textrm{user}}_{j,\textrm{max}}$ \\
\hline
1 & 0.0031& 8.71 & 0 & 113.23  & 17.17 & 0.0935 & 91.79 \\
2 & 0.0074 & 3.53 & 0 & 179.1 & 12.28 & 0.0417 & 147.29 \\
3 & - & - & - & - & 18.42 & 0.1007 & 91.41 \\
\bottomrule
\end{tabular}
\end{center}
\end{table}

\section{Conclusion}\label{se7}

In this work, we proposed an Asyn-PPG algorithm for solving a linearly constrained composite optimization problem in a multi-agent network. An SAN model was established where the agents are allowed to update asynchronously with possibly outdated information of other agents. Under such a framework, a periodic convergence with rate $\mathcal{O}(\frac{1}{K})$ is achieved. As the main feature, the theoretical analysis of the Asyn-PPG algorithm is based on deterministic derivation, which is advantageous over the stochastic method which relies on the acquisition of large-scale historical data. The distributed realization of Asyn-PPG algorithm in some specific networks and problems was also discussed.


\appendix{}

\subsection{Proof of Proposition \ref{pp1}}\label{pp1p}

(\ref{e1}) can be directly proved with (\ref{pan2}).

(\ref{e6}) and (\ref{e7}) can be directly proved with (\ref{pan3}).

By Algorithm \ref{a1x}, $\mathbf{x}_i (t)$ remains unchanged if $t \in [t^{(n)}_m + 1, t_m^{(n+1)}]$. So (\ref{e0}) holds.

For (\ref{e2}), $\mathbf{x}_i(t)$ remains unchanged if $t \in [t^{(P_{i,m})}_{m} + 1, t^{(P_{i,m}+1)}_{m}]$. Since $t_{m+1 }\in [t^{(P_{i,m})}_{m} + 1, t^{(P_{i,m}+1)}_{m}]$ (see (\ref{pan2}) and (\ref{pan1})), then $\mathbf{x}_i(t_{m+1}) = \mathbf{x}_i(t^{(P_{i,m}+1)}_{m})$.

(\ref{e4}) and (\ref{e4+1}) can be jointly verified since $\alpha_i(t)$ remains unchanged during the interval $[t^{(P_{i,m})}_{m}, t^{(P_{i,m}+1)}_{m}-1]$ and $(t_{m+1}-1) \in [t^{(P_{i,m})}_{m}, t^{(P_{i,m}+1)}_{m}-1]$ (see (\ref{pan2}) and (\ref{pan1})).

The proofs of (\ref{e5}) and (\ref{e5+1}) are similar to those of (\ref{e4}) and (\ref{e4+1}) since the values of $\alpha_i(t)$ and $\eta_i(t)$ are updated simultaneously in $\mathcal{T}_i$.

\subsection{Proof of Proposition \ref{la2}}\label{la2p}
For (\ref{l1}), we have
\begin{align}\label{}
  & \parallel \mathbf{x} (t_{m+1}) -  \mathbf{x}^{\mathrm{d}}(t_{m+1}) \parallel^2
  =  \sum_{i \in \mathcal{V}} \parallel \mathbf{x}_i (t_{m+1}) -  \mathbf{x}_i^{\mathrm{d}}(t_{m+1}) \parallel^2 \nonumber \\
 & = \sum_{i \in \mathcal{V}}  \parallel \mathbf{x}_i (t_{m+1}) -  \mathbf{x}_i(t^{(n_{i,m})}_m) \parallel^2 \nonumber \\
 & \leq \sum_{i \in \mathcal{V}} (P_{i,m} - n_{i,m} +1 )\sum_{n = n_{i,m}}^{P_{i,m}} \parallel \mathbf{x}_i (t^{(n+1)}_{m}) -  \mathbf{x}_i(t^{(n)}_{m}) \parallel^2  \nonumber \\
 & \leq \sum_{i \in \mathcal{V}}  \sum_{n = 1}^{P_{i,m}} D \parallel \mathbf{x}_i (t^{(n+1)}_{m}) -  \mathbf{x}_i(t^{(n)}_{m}) \parallel^2,
\end{align}
where the first inequality holds by {\em{Cauchy-Schwarz inequality}} and (\ref{e2}), and the second inequality holds with (\ref{6}). Similarly, for (\ref{l2}),
\begin{align}\label{}
  & \parallel \mathbf{x} (t_{m+1}) -  \mathbf{x}(t_m) \parallel^2
  =  \sum_{i \in \mathcal{V}} \parallel \mathbf{x}_i (t^{(P_{i,m}+1)}_m) -  \mathbf{x}_i(t^{(1)}_m) \parallel^2 \nonumber \\
 & \leq \sum_{i \in \mathcal{V}} \sum_{n = 1}^{P_{i,m}} P_{i,m} \parallel \mathbf{x}_i (t^{(n+1)}_{m}) -  \mathbf{x}_i(t^{(n)}_{m}) \parallel^2   \nonumber \\
 & \leq \sum_{i \in \mathcal{V}} \sum_{n = 1}^{P_{i,m}} H \parallel \mathbf{x}_i (t^{(n+1)}_{m}) -  \mathbf{x}_i(t^{(n)}_{m}) \parallel^2,
\end{align}
where {\em{Cauchy-Schwarz inequality}}, (\ref{e1}), and (\ref{e2}) are used.
%

\subsection{Proof of Lemma \ref{lm3}}\label{lm3p}
By (\ref{th12+1}) and (\ref{lp1}), we have
\begin{align}\label{e10}
& \frac{1}{\alpha_i(t^{(P_{i,m})}_m)} -  \frac{1}{\alpha_i(t^{(P_{i,m-1})}_{m-1})} =
\frac{1}{\alpha_i(t^{(P_{i,m})}_m)} - \frac{1}{\alpha_i(t^{(0)}_{m})} \nonumber \\
& = \sum_{n=1}^{P_{i,m}} ( \frac{1}{\alpha_i(t^{(n)}_m)} - \frac{1}{\alpha_i(t^{(n-1)}_{m})} ) = \sum_{n=1}^{P_{i,m}} \frac{\theta_i(t^{(n)}_m)}{\alpha_i(t^{(n)}_m)} = 1,
\end{align}
which is a constant, $\forall i \in \mathcal{V}$. The first equality holds due to (\ref{e6}). (\ref{e10}) means that $ \{ \frac{1}{\alpha_i(t^{(P_{i,m})}_{m})} \}_{m \in \mathbb{N}}$ is an arithmetic sequence with start element $\frac{1}{\alpha_i(t^{(P_{i,0})}_0 )}$ and common difference 1. Given that ${\alpha_1(t^{(P_{1,0})}_{0})}= ... = {\alpha_{N}(t^{(P_{{N},0})}_{0})}$, we can have $\alpha_1(t_m^{(P_{1,m})}) = ...=\alpha_{N}(t_m^{(P_{{N},m})})$, $m \in \mathbb{N}$. Then, by {Proposition \ref{pp1}}, (\ref{tha1}) can be verified.

In addition, by {Proposition \ref{pp1}}, (\ref{cc}) and the arithmetic sequence $\{ \frac{1}{\alpha_i (t^{(P_{i,m})}_{m})} \}_{m \in \mathbb{N}}$, we have
\begin{align}\label{17-4}
& \frac{1}{\alpha(t_{m}-1)}  = \frac{1}{\alpha_i(t_{m}-1)} = \frac{1}{\alpha_i(t^{(P_{i,m-1})}_{m-1})}  \nonumber \\
& = \frac{1}{\alpha_i(t^{(P_{i,0})}_{0})} + m-1  = \frac{1}{\alpha_i(t_{1}-1)} + m-1  \nonumber \\
& = \frac{1}{\alpha(t_{1}-1)} + m-1 ,
\end{align}
$\forall i \in \mathcal{V}$, which verifies (\ref{17-1}).

By (\ref{th12+1}) and (\ref{lp1}), $\{\frac{1}{\alpha_i(t^{(n)}_m)} \}_{n = 0,1,...,P_{i,m}}$ is an arithmetic sequence with start element $\frac{1}{\alpha_i(t^{(0)}_m)}$ and common difference $\frac{1}{P_{i,m}}$. Then, by {Proposition \ref{pp1}}, we have
\begin{align}\label{17-3}
\frac{1}{\alpha_i(t^{(n)}_m)} = \frac{1}{\alpha_i(t^{(0)}_m)} + \frac{n}{P_{i,m}} = \frac{1}{\alpha_i(t_m -1)} + \frac{n}{P_{i,m}}.
\end{align}
Combining (\ref{17-4}) and (\ref{17-3}) gives (\ref{17-2}).

In (\ref{17-5}), the lower bound is from the inequality chain of instants $t_m^{(n)} \leq t_{m+1} - 1 < t_{m+2} - 1$ and strictly decreasing property of $\alpha_i$ (see {Proposition \ref{pr1}}). For the upper bound, by (\ref{17-4}) and (\ref{17-3}), we have
\begin{align}\label{}
 & \frac{\alpha_i(t^{(n)}_m)}{\alpha(t_{m+2}-1)}  = \frac{(m+1)P_{i,m}\alpha(t_1-1) + P_{i,m}}{n\alpha(t_1-1) + P_{i,m} + (m-1)P_{i,m}\alpha(t_1-1)} \nonumber \\
& \leq \frac{2P_{i,m}\alpha(t_1-1) + P_{i,m}}{n\alpha(t_1-1) + P_{i,m}  }  = \frac{2\alpha(t_1-1) + 1}{\frac{n}{P_{i,m}} \alpha(t_1-1) + 1 }  \nonumber \\
& \leq \frac{2\alpha(t_1-1) + 1}{\frac{1}{H} \alpha(t_1-1) + 1 } = \Pi .
\end{align}
This verifies (\ref{17-5}).

\subsection{Proof of Lemma \ref{lpl}}\label{lplp}

By (\ref{lp1}) and {Propositions \ref{pp1}}, {\ref{pp2}}, we can have
\begin{align}\label{}
\frac{\theta_i(t_m - 1)}{\alpha_i(t_m - 1)} = \frac{\theta_i(t^{(P_{i,m-1})}_{m-1})}{\alpha_i(t^{(P_{i,m-1})}_{m-1})} = \frac{1}{P_{i,m-1}}.
\end{align}
Then, by (\ref{llp}), we can have
\begin{align}\label{}
& \frac{\theta_i(t_{m}-1)} {\alpha_i(t_{m}-1)\eta_i(t_{m}-1)} - \frac{\theta_j(t_{m}-1)} {\alpha_j(t_{m}-1)\eta_j(t_{m}-1)} \nonumber  \\
& =  \frac{1} { P_{i,m-1} \eta_i(t_{m}-1)} - \frac{1} { P_{j,m-1} \eta_i(t_{m}-1) \frac{P_{i,m-1}}{P_{j,m-1}}} \nonumber  \\
& =  0,
\end{align}
$\forall i,j \in \mathcal{V}$, which verifies (\ref{cx}).

\subsection{Proof of Lemma \ref{z}}\label{z1}
Consider instant $t_m^{(n)} \in \mathcal{T}_i \cap [t_{m},t_{m+1})$. By the proximal mapping in Algorithm \ref{a1x}, definition in (\ref{d1}), and (\ref{e0}), we have
\begin{align}\label{}
& \mathbf{x}_i(t^{(n+1)}_m)  = \mathbf{x}_i(t^{(n)}_m+1)  \nonumber \\
&  = \arg \min_{\mathbf{v} \in \mathbb{R}^M} ( h_i(\mathbf{v} )  + \frac{1}{2\eta_i(t^{(n)}_m)} \| \mathbf{v}  -  \mathbf{x}_i(t^{(n)}_m)  + \eta_i(t^{(n)}_m) \nonumber \\
& \quad \cdot (\nabla f_i(\mathbf{x}_i(t^{(n)}_m)) + \frac{\beta \mathbf{W}_i}{\alpha_i (t_{m+1}-1)}\mathbf{x}^{\mathrm{d}}(t_m)) {\|}^2 ),
\end{align}
which means
\begin{align}\label{48}
\mathbf{0} & \in  \partial h_i(\mathbf{x}_i(t^{(n+1)}_m)) + \frac{1}{\eta_i(t^{(n)}_m)} ( \mathbf{x}_i(t^{(n+1)}_m)- \mathbf{x}_i(t^{(n)}_m)  \nonumber \\
 + & \eta_i(t^{(n)}_m)
(\nabla f_i(\mathbf{x}_i(t^{(n)}_m)) +\frac{\beta \mathbf{W}_i}{\alpha_i (t_{m+1}-1)}\mathbf{x}^{\mathrm{d}}(t_m) ) ).
\end{align}
By (\ref{48}) and the convexity of $h_i$, $\forall \mathbf{z}_i \in \mathbb{R}^M$, we have
\begin{align}\label{p1}
& h_i(\mathbf{x}_i(t^{(n+1)}_m))  -h_i(\mathbf{z}_i ) \leq  \langle \nabla f_i(\mathbf{x}_i(t^{(n)}_m))  \nonumber \\
&  + \frac{\beta \mathbf{W}_i}{\alpha_i(t_{m+1}-1)}\mathbf{x}^{\mathrm{d}}(t_m), \mathbf{z}_i  - \mathbf{x}_{i}(t^{(n+1)}_m) \rangle \nonumber \\
& + \frac{1}{\eta_i(t^{(n)}_m)} \langle \mathbf{x}_i(t^{(n+1)}_m) - \mathbf{x}_i(t^{(n)}_m), \mathbf{z}_i  - \mathbf{x}_i(t^{(n+1)}_m) \rangle.
\end{align}
On the other hand, by the $L_i$-Lipschitz continuous differentiability and $\mu_i$-strong convexity of $f_i$, we have
\begin{align}\label{p2}
  f_i  (\mathbf{x}_i & (t^{(n+1)}_m))  \leq  f_i(\mathbf{x}_i (t^{(n)}_m)) + \langle \nabla f_i(\mathbf{x}_i(t^{(n)}_m)), \mathbf{x}_i(t^{(n+1)}_m)  \nonumber \\
 & - \mathbf{x}_i(t^{(n)}_m)   \rangle
  +  \frac{L_i}{2} \parallel \mathbf{x}_i(t^{(n+1)}_m)-\mathbf{x}_i(t^{(n)}_m) \parallel^2 \nonumber \\
  & \leq    f_i(\mathbf{z}_i) + \langle \nabla f_i(\mathbf{x}_i(t^{(n)}_m)), \mathbf{x}_i(t^{(n+1)}_m)- \mathbf{z}_i   \rangle   \nonumber \\
  & +  \frac{L_i}{2} \parallel \mathbf{x}_i(t^{(n+1)}_m)-\mathbf{x}_i(t^{(n)}_m) \parallel^2  \nonumber \\
 & -\frac{\mu_i}{2}\parallel \mathbf{z}_i-\mathbf{x}_i(t^{(n)}_m) \parallel^2.
\end{align}
Adding (\ref{p1}) and (\ref{p2}) together from the both sides gives
\begin{align}\label{p2+1}
& F_i (\mathbf{x}_i  (t^{(n+1)}_m))  -  {F}_i(\mathbf{z}_i)
\leq  \langle \mathbf{A}_i^{\top} \bm{\lambda}_i^{\mathrm{d}}(t_m), \mathbf{z}_i - \mathbf{x}_{i}(t^{(n+1)}_m)\rangle \nonumber \\
 & +  \frac{1}{\eta_i(t^{(n)}_m)}  \langle \mathbf{x}_i(t^{(n+1)}_m) - \mathbf{x}_i(t^{(n)}_m), \mathbf{z}_i - \mathbf{x}_i(t^{(n+1)}_m)\rangle \nonumber \\
  & +  \frac{L_i}{2}  \parallel \mathbf{x}_i (t^{(n+1)}_m)-\mathbf{x}_i (t^{(n)}_m) \parallel^2   -\frac{\mu_i}{2}\parallel \mathbf{z}_i  - \mathbf{x}_i (t^{(n)}_m) \parallel^2,
\end{align}
where
\begin{equation}\label{}
  \bm{\lambda}_i^{\mathrm{d}}(t_m)= \frac{\beta \mathbf{A}\mathbf{x}^{\mathrm{d}}(t_m)}{\alpha_i(t_{m+1}-1)}.
\end{equation}
By letting $\mathbf{z}_i = \mathbf{x}_i^*$ and $\mathbf{z}_i = \mathbf{x}_i(t_m^{(n)})$ in (\ref{p2+1}), we have
\begin{align}\label{p4}
& {F}_i(\mathbf{x}_i (t_m^{(n+1)}))  - {F}_i(\mathbf{x}_i^*)
\leq  \langle \mathbf{A}_i^{\top} \bm{\lambda}_i^{\mathrm{d}}(t_m), \mathbf{x}_i^* - \mathbf{x}_i(t_m^{(n+1)})\rangle \nonumber \\
& + \frac{1}{\eta_i(t_m^{(n)})} \langle \mathbf{x}_i(t_m^{(n+1)}) - \mathbf{x}_i(t_m^{(n)}), \mathbf{x}_i^* - \mathbf{x}_i(t_m^{(n)})\rangle  \nonumber \\
&  + \frac{1}{2}(L_i-\frac{2}{\eta_i(t_m^{(n)})})  \parallel \mathbf{x}_i(t_m^{(n+1)})-\mathbf{x}_i(t_m^{(n)}) \parallel^2  \nonumber \\
& -\frac{\mu_i}{2}\parallel \mathbf{x}_i^*-\mathbf{x}_i(t_m^{(n)}) \parallel^2 ,
\end{align}
and
\begin{align}\label{p5}
 & {F}_i  (\mathbf{x}_i(t_m^{(n+1)}))  - {F}_i(\mathbf{x}_i(t_m^{(n)}))
\leq  \langle \mathbf{A}_i^{\top} \bm{\lambda}_i^{\mathrm{d}}(t_m), \mathbf{x}_i(t_m^{(n)}) \nonumber \\
 &  \quad \quad - \mathbf{x}_i(t_m^{(n+1)}) \rangle + \frac{1}{2}({L_i}- \frac{2}{\eta_i(t_m^{(n)})} ) \nonumber \\
  & \quad \quad \cdot \parallel \mathbf{x}_i(t_m^{(n+1)})-\mathbf{x}_i(t_m^{(n)}) \parallel^2.
\end{align}
Then, by multiplying (\ref{p4}) by $\theta_i(t_m^{(n)})$ and (\ref{p5}) by $1-\theta_i(t_m^{(n)})$ and adding the results together, we have
\begin{align}\label{p6}
 &{F}_i(\mathbf{x}_i(t_m^{(n+1)})) - {F}_i(\mathbf{x}_i^*)  \nonumber \\
 & - (1-\theta_i(t_m^{(n)}))({F}_i(\mathbf{x}_i(t_m^{(n)})) - {F}_i(\mathbf{x}_i^*)) \nonumber \\
 \leq & -\langle \mathbf{A}_i^{\top} \bm{\lambda}_i^{\mathrm{d}}(t_m), \mathbf{x}_i(t_m^{(n+1)})  - (1-\theta_i(t_m^{(n)}))\mathbf{x}_i(t_m^{(n)})\rangle \nonumber \\
& + \frac{1}{2}(L_i- \frac{2}{\eta_i(t_m^{(n)})})\parallel \mathbf{x}_i(t_m^{(n+1)})-\mathbf{x}_i(t_m^{(n)}) \parallel^2 \nonumber \\
& + \theta_i(t_m^{(n)})\langle \mathbf{A}_i^{\top} \bm{\lambda}_i^{\mathrm{d}}(t_m), \mathbf{x}_i^* \rangle -\frac{\theta_i(t_m^{(n)})\mu_i}{2}\parallel \mathbf{x}_i^*  -\mathbf{x}_i(t_m^{(n)}) \parallel^2 \nonumber \\
& + \frac{\theta_i(t_m^{(n)})}{\eta_i(t_m^{(n)})} \langle \mathbf{x}_i(t_m^{(n+1)}) - \mathbf{x}_i(t_m^{(n)}), \mathbf{x}_i^* - \mathbf{x}_i(t_m^{(n)})\rangle \nonumber \\
 = & - \frac{\alpha_i(t_m^{(n)}) }{\beta} \langle \bm{\lambda}_i^{\mathrm{d}}(t_m), \bm{\lambda}_i(t_m^{(n+1)}) - \bm{\lambda}_i(t_m^{(n)}) \rangle \nonumber \\
& + \frac{1}{2}({L_i}- \frac{2}{\eta_i(t_m^{(n)})})\parallel \mathbf{x}_i(t_m^{(n+1)})-\mathbf{x}_i(t_m^{(n)}) \parallel^2 \nonumber \\
& +\theta_i(t_m^{(n)}) \langle \bm{\lambda}_i^{\mathrm{d}}(t_m),  \mathbf{A}_i \mathbf{x}_i^* \rangle -\frac{\theta_i(t_m^{(n)})\mu_i}{2}\parallel \mathbf{x}_i^* -\mathbf{x}_i(t_m^{(n)}) \parallel^2 \nonumber \\
&  + \frac{\theta_i(t_m^{(n)})} {\eta_i(t_m^{(n)})} \underbrace{\langle \mathbf{x}_i(t_m^{(n+1)}) - \mathbf{x}_i(t_m^{(n)}), \mathbf{x}_i^* - \mathbf{x}_i(t_m^{(n)})\rangle}_{=  \Gamma_1} \nonumber \\
  = & - \frac{\alpha_i(t_m^{(n)})}{\beta} \langle \bm{\lambda}_i^{\mathrm{d}}(t_m), \bm{\lambda}_i(t_m^{(n+1)}) - \bm{\lambda}_i(t_m^{(n)}) \rangle  \nonumber \\
&  + \frac{1}{2}({L_i}- \frac{2-\theta_i(t_m^{(n)})}{\eta_i(t_m^{(n)})} ) \parallel \mathbf{x}_i(t_m^{(n+1)}) -\mathbf{x}_i(t_m^{(n)}) \parallel^2 \nonumber \\
& +( \frac{\theta_i(t_m^{(n)})}{2\eta_i(t_m^{(n)})}  -\frac{\theta_i(t_m^{(n)})\mu_i}{2} )\parallel \mathbf{x}_i^*-\mathbf{x}_i(t_m^{(n)}) \parallel^2 \nonumber \\
 & -   \frac{\theta_i(t_m^{(n)})}{2\eta_i(t_m^{(n)})} \parallel \mathbf{x}_i^*-\mathbf{x}_i(t_m^{(n+1)}) \parallel^2 \nonumber \\
 & + \theta_i(t_m^{(n)}) \langle \bm{\lambda}_i^{\mathrm{d}}(t_m),  \mathbf{A}_i \mathbf{x}_i^* \rangle,
\end{align}
where
\begin{equation}\label{44}
  \bm{\lambda}_i(t_m^{(n)}) = \frac{\beta\mathbf{A}_i\mathbf{x}_i(t_m^{(n)})}{\alpha_i(t_m^{(n-1)})}.
\end{equation}
The first equality in (\ref{p6}) holds since
\begin{align}\label{}
& \bm{\lambda}_i  (t_m^{(n+1)})  - \bm{\lambda}_i(t_m^{(n)})  = \frac{\beta \mathbf{A}_i\mathbf{x}_i(t_m^{(n+1)})}{\alpha_i(t_m^{(n)})} - \frac{\beta \mathbf{A}_i\mathbf{x}_i(t_m^{(n)})}{\alpha_i(t_m^{(n-1)})} \nonumber \\
&  =  \frac{\beta \mathbf{A}_i\mathbf{x}_i(t_m^{(n+1)})}{\alpha_i(t_m^{(n)})} - \frac{\beta \mathbf{A}_i\mathbf{x}_i(t_m^{(n)})(1-\theta_i(t_m^{(n)}))}{\alpha_i(t_m^{(n)})} \nonumber \\
   & = \frac{\beta}{\alpha_i(t_m^{(n)})}(\mathbf{A}_i\mathbf{x}_i(t_m^{(n+1)}) - (1-\theta_i(t_m^{(n)}))\mathbf{A}_i\mathbf{x}_i(t_m^{(n)})). \nonumber
\end{align}
The second equality in (\ref{p6}) uses relation $\langle \mathbf{u}, \mathbf{v}\rangle = \frac{1}{2} (\parallel \mathbf{u} \parallel^2 + \parallel \mathbf{v} \parallel^2 - \parallel \mathbf{u} - \mathbf{v} \parallel^2)$ on $\Gamma_1$, $\forall \mathbf{u},\mathbf{v} \in \mathbb{R}^M$.

Then, by adding $\langle \bm{\lambda}^*, \mathbf{A}_i\mathbf{x}_i(t_m^{(n+1)})\rangle - (1 - \theta_i(t_m^{(n)})) \langle \bm{\lambda}^*, \mathbf{A}_i\mathbf{x}_i(t_m^{(n)})\rangle$ to the both sides of (\ref{p6}), we have
\begin{align}\label{p6+2}
 &{F}_i(\mathbf{x}_i(t_m^{(n+1)})) - {F}_i(\mathbf{x}_i^*) + \langle \bm{\lambda}^*, \mathbf{A}_i\mathbf{x}_i(t_m^{(n+1)})\rangle \nonumber \\
& - (1-\theta_i(t_m^{(n)}))({F}_i(\mathbf{x}_i(t_m^{(n)})) - {F}_i(\mathbf{x}_i^*) + \langle \bm{\lambda}^*, \mathbf{A}_i\mathbf{x}_i(t_m^{(n)})\rangle) \nonumber \\
 \leq & \frac{\alpha_i(t_m^{(n)})}{\beta} \langle \bm{\lambda}^*- \bm{\lambda}_i^{\mathrm{d}}(t_m), \bm{\lambda}_i(t_m^{(n+1)}) - \bm{\lambda}_i(t_m^{(n)}) \rangle   \nonumber \\
&  + \frac{1}{2}({L_i}- \frac{2-\theta_i(t_m^{(n)})}{\eta_i(t_m^{(n)})} ) \parallel \mathbf{x}_i(t_m^{(n+1)}) -\mathbf{x}_i(t_m^{(n)}) \parallel^2 \nonumber \\
&  +( \frac{\theta_i(t_m^{(n)})}{2\eta_i(t_m^{(n)})}  -\frac{\theta_i(t_m^{(n)})\mu_i}{2} )\parallel \mathbf{x}_i^*-\mathbf{x}_i(t_m^{(n)}) \parallel^2 \nonumber \\
 & - \frac{\theta_i(t_m^{(n)})}{2\eta_i(t_m^{(n)})} \parallel \mathbf{x}_i^*-\mathbf{x}_i(t_m^{(n+1)}) \parallel^2 \nonumber \\
 & + {\theta_i(t_m^{(n)})}  \langle \bm{\lambda}_i^{\mathrm{d}}(t_m),  \mathbf{A}_i \mathbf{x}_i^* \rangle  .
\end{align}
Divide the both sides of (\ref{p6+2}) by $\alpha_i(t_m^{(n)})$ and use (\ref{th12+1}) and (\ref{th14+1}), then we have
\begin{align}\label{p6+4}
 &\frac{1}{\alpha_i(t_m^{(n)})}({F}_i(\mathbf{x}_i(t_m^{(n+1)})) - {F}_i(\mathbf{x}_i^*) + \langle \bm{\lambda}^*, \mathbf{A}_i\mathbf{x}_i(t_m^{(n+1)})\rangle) \nonumber \\
& - \frac{1}{\alpha_i(t_m^{(n-1)})}({F}_i(\mathbf{x}_i(t_m^{(n)})) - {F}_i(\mathbf{x}_i^*) + \langle \bm{\lambda}^*, \mathbf{A}_i\mathbf{x}_i(t_m^{(n)})\rangle) \nonumber \\
& \leq  \frac{1}{\beta} \langle \bm{\lambda}^*- \bm{\lambda}_i^{\mathrm{d}}(t_m), \bm{\lambda}_i(t_m^{(n+1)}) - \bm{\lambda}_i(t_m^{(n)}) \rangle +  \frac{1}{2\alpha_i(t_m^{(n)})} \nonumber \\
& \quad  \cdot (L_i- \frac{2-\theta_i(t_m^{(n)})}{\eta_i(t_m^{(n)})} ) \parallel \mathbf{x}_i(t_m^{(n+1)}) -\mathbf{x}_i(t_m^{(n)}) \parallel^2 \nonumber \\
& \quad  + \frac{\theta_i(t^{(n)}_m)- \theta_i(t^{(n)}_m)\eta_i(t_m^{(n)})\mu_i}{2\alpha_i(t_m^{(n)})\eta_i(t^{(n)}_m)}  \parallel \mathbf{x}_i^*-\mathbf{x}_i(t_m^{(n)}) \parallel^2 \nonumber \\
& \quad  - \frac{\theta_i(t_m^{(n)})} {2\alpha_i(t_m^{(n)})\eta_i(t_m^{(n)})} \parallel \mathbf{x}_i^*-\mathbf{x}_i(t_m^{(n+1)}) \parallel^2 \nonumber \\
& \quad + \frac{\theta_i(t_m^{(n)})}{\alpha_i(t_m^{(n)})} \langle \bm{\lambda}_i^{\mathrm{d}}(t_m),  \mathbf{A}_i \mathbf{x}_i^* \rangle \nonumber \\
& \leq  \frac{1}{\beta} \langle \bm{\lambda}^*- \bm{\lambda}_i^{\mathrm{d}}(t_m), \bm{\lambda}_i(t_m^{(n+1)}) - \bm{\lambda}_i(t_m^{(n)}) \rangle \nonumber \\
& \quad + \frac{1}{2\alpha_i(t_m^{(n)})}(L_i- \frac{2-\theta_i(t_m^{(n)})}{\eta_i(t_m^{(n)})} ) \parallel \mathbf{x}_i(t_m^{(n+1)}) \nonumber \\
& \quad -\mathbf{x}_i(t_m^{(n)}) \parallel^2   + \frac{\theta_i(t_m^{(n-1)})}{2\alpha_i(t_m^{(n-1)})\eta_i(t_m^{(n-1)})} \parallel \mathbf{x}_i^*-\mathbf{x}_i(t_m^{(n)}) \parallel^2 \nonumber \\
& \quad  - \frac{\theta_i(t_m^{(n)})}{2\alpha_i(t_m^{(n)})\eta_i(t_m^{(n)})} \parallel \mathbf{x}_i^*-\mathbf{x}_i(t_m^{(n+1)}) \parallel^2 \nonumber \\
& \quad + \frac{\theta_i(t_m^{(n)})}{\alpha_i(t_m^{(n)})} \langle \bm{\lambda}_i^{\mathrm{d}}(t_m),  \mathbf{A}_i \mathbf{x}_i^* \rangle.
\end{align}
Then, by summing up (\ref{p6+4}) from the both sides over $n = 1,2,...,P_{i,m}$, we have
\begin{align}\label{p6+5}
& \sum_{n=1}^{P_{i,m}} ( \frac{1}{\alpha_i(t_m^{(n)})}({F}_i(\mathbf{x}_i(t_m^{(n+1)})) - {F}_i(\mathbf{x}_i^*)+ \langle \bm{\lambda}^*, \mathbf{A}_i\mathbf{x}_i(t_m^{(n+1)})\rangle) \nonumber \\
& \quad - \frac{1}{\alpha_i(t_m^{(n-1)})}({F}_i(\mathbf{x}_i(t_m^{(n)})) - {F}_i(\mathbf{x}_i^*) + \langle \bm{\lambda}^*, \mathbf{A}_i\mathbf{x}_i(t_m^{(n)})\rangle) ) \nonumber \\
& = \frac{1}{\alpha_i(t^{(P_{i,m})}_m)} ({F}_i(\mathbf{x}_i(t^{(P_{i,m}+1)}_{m})) - {F}_i(\mathbf{x}_i^*)  \nonumber \\
& \quad + \langle \bm{\lambda}^*, \mathbf{A}_i\mathbf{x}_i(t^{(P_{i,m}+1)}_{m})\rangle) \nonumber \\
& \quad - \frac{1}{\alpha_i(t^{(0)}_{m})}({F}_i(\mathbf{x}_i(t^{(1)}_{m})) - {F}_i(\mathbf{x}_i^*)  + \langle \bm{\lambda}^*, \mathbf{A}_i\mathbf{x}_i(t^{(1)}_{m})\rangle) \nonumber \\
 & = \frac{1}{\alpha_i(t_{m+1}-1)}({F}_i(\mathbf{x}_i(t_{m+1})) - {F}_i(\mathbf{x}_i^*) + \langle \bm{\lambda}^*, \mathbf{A}_i\mathbf{x}_i(t_{m+1})\rangle) \nonumber \\
& \quad - \frac{1}{\alpha_i(t_{m}-1)}({F}_i(\mathbf{x}_i(t_{m})) - {F}_i(\mathbf{x}_i^*) +  \langle \bm{\lambda}^*, \mathbf{A}_i\mathbf{x}_i(t_{m})\rangle) \nonumber \\
& \leq \sum_{n=1}^{P_{i,m}} \frac{1}{\beta} \langle \bm{\lambda}^*- \bm{\lambda}_i^{\mathrm{d}}(t_m), \bm{\lambda}_i(t_m^{(n+1)}) - \bm{\lambda}_i(t_m^{(n)}) \rangle \nonumber \\
& \quad  +  \sum_{n=1}^{P_{i,m}}  \frac{1}{2\alpha_i(t_m^{(n)})}(L_i- \frac{2-\theta_i(t_m^{(n)})}{\eta_i(t_m^{(n)})} ) \parallel \mathbf{x}_i(t_m^{(n+1)}) \nonumber \\
& \quad -\mathbf{x}_i(t_m^{(n)}) \parallel^2  + \sum_{n=1}^{P_{i,m}} \frac{\theta_i(t_m^{(n)})}{\alpha_i(t_m^{(n)})} \langle \bm{\lambda}_i^{\mathrm{d}}(t_m),  \mathbf{A}_i \mathbf{x}_i^* \rangle \nonumber \\
& \quad  +  \sum_{n=1}^{P_{i,m}} ( \frac{\theta_i(t_m^{(n-1)})}{2\alpha_i(t_m^{(n-1)})\eta_i(t_m^{(n-1)})} \parallel \mathbf{x}_i^*-\mathbf{x}_i(t_m^{(n)}) \parallel^2 \nonumber \\
&  \quad  - \frac{\theta_i(t_m^{(n)})}{2\alpha_i(t_m^{(n)})\eta_i(t_m^{(n)})} \parallel \mathbf{x}_i^*-\mathbf{x}_i(t_m^{(n+1)}) \parallel^2 ) \nonumber \\
& =  \frac{1}{\beta} \langle \bm{\lambda}^*- \bm{\lambda}_i^{\mathrm{d}}(t_m), \bm{\lambda}_i(t_{m+1}) - \bm{\lambda}_i(t_m) \rangle \nonumber \\
& \quad + \sum_{n=1}^{P_{i,m}}  \frac{1}{2\alpha_i(t^{(n)}_m)}(L_i- \frac{2-\theta_i(t^{(n)}_m)}{\eta_i(t^{(n)}_m)} )  \parallel \mathbf{x}_i(t_m^{(n+1)}) \nonumber \\
&\quad  -\mathbf{x}_i(t_m^{(n)}) \parallel^2    + \sum_{n=1}^{P_{i,m}} \frac{\theta_i(t_m^{(n)})}{\alpha_i(t_m^{(n)})} \langle \bm{\lambda}_i^{\mathrm{d}}(t_m),  \mathbf{A}_i \mathbf{x}_i^* \rangle \nonumber \\
& \quad  +  \frac{\theta_i(t_m^{(0)})}{2\alpha_i(t_m^{(0)})\eta_i(t_m^{(0)})} \parallel \mathbf{x}_i^*-\mathbf{x}_i(t_m^{(1)}) \parallel^2 \nonumber \\
&  \quad  - \frac{\theta_i(t_m^{(P_{i,m})})}{2\alpha_i(t_m^{(P_{i,m})})\eta_i(t_m^{(P_{i,m})})} \parallel \mathbf{x}_i^*-\mathbf{x}_i(t_m^{(P_{i,m}+1)}) \parallel^2  \nonumber \\
& =  \frac{1}{\beta} \langle \bm{\lambda}^*- \bm{\lambda}_i^{\mathrm{d}}(t_m), \bm{\lambda}_i(t_{m+1}) - \bm{\lambda}_i(t_m) \rangle \nonumber \\
& \quad + \sum_{n=1}^{P_{i,m}}  \frac{1}{2\alpha_i(t^{(n)}_m)}(L_i- \frac{2-\theta_i(t^{(n)}_m)}{\eta_i(t^{(n)}_m)} )  \parallel \mathbf{x}_i(t_m^{(n+1)}) \nonumber \\
&\quad  -\mathbf{x}_i(t_m^{(n)}) \parallel^2    + \sum_{n=1}^{P_{i,m}} \frac{\theta_i(t_m^{(n)})}{\alpha_i(t_m^{(n)})} \langle \bm{\lambda}_i^{\mathrm{d}}(t_m),  \mathbf{A}_i \mathbf{x}_i^* \rangle \nonumber \\
& \quad + \frac{\theta_i(t_{m}-1)}{2\alpha_i(t_{m}-1)\eta_i(t_{m}-1)} \parallel \mathbf{x}_i^*-\mathbf{x}_i(t_{m}) \parallel^2 \nonumber \\
& \quad  - \frac{\theta_i(t_{m+1}-1)}{2\alpha_i(t_{m+1}-1)\eta_i(t_{m+1}-1)} \parallel \mathbf{x}_i^*-\mathbf{x}_i(t_{m+1}) \parallel^2,
\end{align}
where we introduce intermediate variables
\begin{align}\label{}
&  \bm{\lambda}_i(t_{m}) = \bm{\lambda}_i(t_m^{(1)}) = \frac{\beta\mathbf{A}_i\mathbf{x}_i(t_m^{(1)})}{\alpha_i(t_m^{(0)})} = \frac{\beta\mathbf{A}_i\mathbf{x}_i(t_m)}{\alpha_i(t_m-1)},
\label{pa1}  \\
& \bm{\lambda}_i(t_{m+1}) =  \bm{\lambda}_i(t_m^{(P_{i,m}+1)})= \frac{\beta\mathbf{A}_i \mathbf{x}_i(t_m^{(P_{i,m}+1)})} {\alpha_i(t_m^{(P_{i,m})})} \nonumber \\
  &  \quad \quad \quad \quad = \frac{\beta\mathbf{A}_i \mathbf{x}_i(t_{m+1})} {\alpha_i(t_{m+1}-1)}. \label{pa2}
\end{align}
The equalities in (\ref{p6+5}) hold by {Propositions \ref{pp1}}, {\ref{pp2}}, and successive cancelations. (\ref{pa1}) and (\ref{pa2}) hold by {Proposition \ref{pp1}}. Then, the result of {Lemma \ref{z}} can be verified with the sixth to the last lines in (\ref{p6+5}).

\subsection{Proof of Theorem \ref{th1}}\label{tp1}

Note that (\ref{th12+1}), (\ref{1}), and (\ref{lp1}) jointly imply the synchronization of $\{\alpha_i(t_{m}-1) \}_{m \in \mathbb{N}_+}$. For convenience purpose, we define
\begin{align}\label{}
& \bm{\lambda}^{\mathrm{d}}(t_m) =  \bm{\lambda}_i^{\mathrm{d}}(t_m) = \frac{\beta \mathbf{A} \mathbf{x}^{\mathrm{d}}(t_m) }{\alpha_i(t_{m+1}-1)}
  =  \frac{\beta \mathbf{A} \mathbf{x}^{\mathrm{d}}(t_m) }{\alpha(t_{m+1}-1)}, \label{pa0} \\
& \bm{\lambda}(t_m) = \sum_{i \in \mathcal{V}}\bm{\lambda}_i(t_m) = \sum_{i \in \mathcal{V}} \frac{\beta\mathbf{A}_i\mathbf{x}_i(t_m)}{\alpha_i(t_m-1)} = \frac{\beta\mathbf{A}\mathbf{x}(t_m)}{\alpha(t_m-1)}, \label{64}
\end{align}
with the help of (\ref{cc}), $\forall i \in \mathcal{V}$. Therefore, by summing up (\ref{zt2}) over $i \in \mathcal{V}$ and $m=1,...,K$, we have

\begin{align}\label{p11}
 &\frac{1}{\alpha(t_{K+1}-1)} ({F}(\mathbf{x}(t_{K+1})) - {F}(\mathbf{x}^*) + \langle \bm{\lambda}^*, \mathbf{A}\mathbf{x}(t_{K+1})\rangle) \nonumber \\
& \quad - \frac{1}{\alpha(t_{1}-1)}({F}(\mathbf{x}(t_{1})) - {F}(\mathbf{x}^*) + \langle \bm{\lambda}^*, \mathbf{A}\mathbf{x}(t_{1})\rangle) \nonumber \\
  \leq & \frac{1}{\beta }\sum_{i \in \mathcal{V}} \sum_{m=1}^K \langle \bm{\lambda}^*- \bm{\lambda}^{\mathrm{d}}(t_m), \bm{\lambda}_i(t_{m+1}) - \bm{\lambda}_i(t_m ) \rangle \nonumber \\
 & +  \sum_{i \in \mathcal{V}} \sum_{m=1}^K  \sum_{n=1}^{P_{i,m}} \frac{1}{2\alpha_i(t^{(n)}_m)}(L_i- \frac{2-\theta_i(t^{(n)}_m)}{\eta_i(t^{(n)}_m)} ) \nonumber \\
& \cdot \parallel \mathbf{x}_i(t_m^{(n+1)}) -\mathbf{x}_i(t_m^{(n)}) \parallel^2  \nonumber \\
& + \sum_{i \in \mathcal{V}} \sum_{m=1}^K \sum_{n=1}^{P_{i,m}} \frac{\theta_i(t_m^{(n)})}{\alpha_i(t_m^{(n)})} \langle \bm{\lambda}^{\mathrm{d}}(t_m),  \mathbf{A}_i \mathbf{x}_i^* \rangle\nonumber \\
& + \sum_{i \in \mathcal{V}} \sum_{m=1}^K   ( \frac{\theta_i(t_{m}-1)}{2\alpha_i(t_{m}-1)\eta_i(t_{m}-1)} \parallel \mathbf{x}_i^*-\mathbf{x}_i(t_{m}) \parallel^2 \nonumber \\
& - \frac{\theta_i(t_{m+1}-1)}{2\alpha_i(t_{m+1}-1)\eta_i(t_{m+1}-1)} \parallel \mathbf{x}_i^*-\mathbf{x}_i(t_{m+1}) \parallel^2 ) \nonumber \\
  = & \frac{1}{\beta } \sum_{m=1}^K \underbrace{\langle \bm{\lambda}^*- \bm{\lambda}^{\mathrm{d}}(t_m), \bm{\lambda}(t_{m+1}) - \bm{\lambda}(t_m) \rangle}_{\Gamma_2} \nonumber \\
 & + \sum_{i \in \mathcal{V}} \sum_{m=1}^K  \sum_{n=1}^{P_{i,m}} \frac{1}{2\alpha_i(t^{(n)}_m)}(L_i- \frac{2-\theta_i(t^{(n)}_m)}{\eta_i(t^{(n)}_m)} ) \nonumber \\
& \cdot \parallel \mathbf{x}_i(t_m^{(n+1)}) -\mathbf{x}_i(t_m^{(n)}) \parallel^2 \nonumber \\
& +  \sum_{m=1}^K (\frac{\Xi_m}{2} \parallel \mathbf{x}^*-\mathbf{x}(t_{m}) \parallel^2    - \frac{\Xi_{m+1}}{2} \parallel \mathbf{x}^*-\mathbf{x}(t_{m+1}) \parallel^2 ) \nonumber \\
  = & \frac{1}{2\beta } \sum_{m=1}^K  (\parallel \bm{\lambda}(t_m) - \bm{\lambda}^* \parallel^2 - \parallel \bm{\lambda}(t_{m+1}) - \bm{\lambda}^* \parallel^2  \nonumber \\
& - \parallel \bm{\lambda}^{\mathrm{d}}(t_{m})- \bm{\lambda}(t_m) \parallel^2 + \underbrace{\parallel \bm{\lambda}^{\mathrm{d}}(t_{m})- \bm{\lambda}(t_{m+1}) \parallel^2}_{= \Gamma_3} ) \nonumber \\
& + \sum_{i \in \mathcal{V}} \sum_{m=1}^K  \sum_{n=1}^{P_{i,m}} \frac{1}{2\alpha_i(t^{(n)}_m)}(L_i- \frac{2-\theta_i(t^{(n)}_m)}{\eta_i(t^{(n)}_m)} )  \nonumber \\
& \cdot \parallel \mathbf{x}_i(t_m^{(n+1)}) -\mathbf{x}_i(t_m^{(n)}) \parallel^2  + \frac{\Xi_1}{2} \parallel \mathbf{x}^*-\mathbf{x}(t_{1}) \parallel^2  \nonumber \\
&  - \frac{\Xi_{K+1}}{2} \parallel \mathbf{x}^*-\mathbf{x}(t_{K+1}) \parallel^2 \nonumber \\
\leq & \frac{1}{2\beta } \sum_{m=1}^K   (\parallel \bm{\lambda}(t_m) - \bm{\lambda}^* \parallel^2 - \parallel \bm{\lambda}(t_{m+1}) - \bm{\lambda}^* \parallel^2 ) \nonumber \\
& + \sum_{i \in \mathcal{V}} \sum_{m=1}^K  \sum_{n=1}^{P_{i,m}} \frac{1}{2\alpha_i(t^{(n)}_m)}(L_i- \frac{2-\theta_i(t^{(n)}_m)}{\eta_i(t^{(n)}_m)} )  \nonumber \\
& \cdot \parallel \mathbf{x}_i(t_m^{(n+1)}) -\mathbf{x}_i(t_m^{(n)}) \parallel^2  + \frac{\Xi_1}{2} \parallel \mathbf{x}^*-\mathbf{x}(t_{1}) \parallel^2  \nonumber \\
&  - \frac{\Xi_{K+1}}{2} \parallel \mathbf{x}^*-\mathbf{x}(t_{K+1}) \parallel^2   \nonumber \\
 + & \sum_{i \in \mathcal{V}} \sum_{m=1}^K  \sum_{n=1}^{P_{i,m}} \frac{ (H + D)\beta \parallel \mathbf{A} \parallel^2}{\alpha^2(t_{m+2}-1)} \parallel \mathbf{x}_i(t_m^{(n+1)}) - \mathbf{x}_i(t_m^{(n)}) \parallel^2 \nonumber \\
 & +   \sum_{i \in \mathcal{V}} \sum_{n=1}^{P_{i,0}} \frac{D\beta \parallel \mathbf{A} \parallel^2}{\alpha^2(t_2-1)} \parallel \mathbf{x}_i(t_{0}^{(n+1)}) - \mathbf{x}_i(t_{0}^{(n)}) \parallel^2 \nonumber \\
  = & \frac{1}{2\beta } (\parallel \bm{\lambda}(t_1) - \bm{\lambda}^* \parallel^2 - \parallel \bm{\lambda}(t_{K+1}) - \bm{\lambda}^* \parallel^2 ) \nonumber \\
& + \frac{\Xi_1}{2} \parallel \mathbf{x}^*-\mathbf{x}(t_{1}) \parallel^2    - \frac{\Xi_{K+1}}{2} \parallel \mathbf{x}^*-\mathbf{x}(t_{K+1}) \parallel^2   \nonumber \\
&  + \sum_{i \in \mathcal{V}} \sum_{m=1}^K  \sum_{n=1}^{P_{i,m}} ( \frac{L_i}{2\alpha_i(t^{(n)}_m)} - \frac{2-\theta_i(t^{(n)}_m)}{2\alpha_i(t^{(n)}_m)\eta_i(t^{(n)}_m)} \nonumber \\
  & +  \frac{(H + D) \beta\parallel \mathbf{A} \parallel^2}{\alpha^2(t_{m+2}-1)} ) \parallel \mathbf{x}_i(t_m^{(n+1)}) -\mathbf{x}_i(t_m^{(n)}) \parallel^2 \nonumber \\
 & +   \sum_{i \in \mathcal{V}} \sum_{n=1}^{P_{i,0}} \frac{D\beta \parallel \mathbf{A} \parallel^2}{\alpha^2(t_2-1)} \parallel \mathbf{x}_i(t_{0}^{(n+1)}) - \mathbf{x}_i(t_{0}^{(n)}) \parallel^2 \nonumber \\
  \leq  & \frac{1}{2\beta } (\parallel \bm{\lambda}(t_1) - \bm{\lambda}^* \parallel^2 - \parallel \bm{\lambda}(t_{K+1}) - \bm{\lambda}^* \parallel^2 ) \nonumber \\
 & +  \frac{\Xi_1}{2} \parallel \mathbf{x}^*-\mathbf{x}(t_{1}) \parallel^2    - \frac{\Xi_{K+1}}{2} \parallel \mathbf{x}^*-\mathbf{x}(t_{K+1}) \parallel^2   \nonumber \\
 & +  \sum_{i \in \mathcal{V}} \sum_{n=1}^{P_{i,0}} \frac{D \beta \parallel \mathbf{A} \parallel^2}{\alpha^2(t_{2}-1)} \parallel \mathbf{x}_i(t_{0}^{(n+1)}) - \mathbf{x}_i(t_{0}^{(n)}) \parallel^2.
\end{align}
In the first equality, (\ref{271}) is applied and the third term is cancelled out due to
\begin{align}\label{}
& \sum_{i \in \mathcal{V}} \sum_{m=1}^K \sum_{n=1}^{P_{i,m}} \frac{\theta_i(t_m^{(n)})}{\alpha_i(t_m^{(n)})} \langle \bm{\lambda}^{\mathrm{d}}(t_m),  \mathbf{A}_i \mathbf{x}_i^* \rangle \nonumber \\
& = \sum_{i \in \mathcal{V}} \sum_{m=1}^K \langle \bm{\lambda}^{\mathrm{d}}(t_m),  \mathbf{A}_i \mathbf{x}_i^* \rangle  \nonumber \\
& =  \sum_{m=1}^K \langle \bm{\lambda}^{\mathrm{d}}(t_m),  \mathbf{A} \mathbf{x}^* \rangle =0. \nonumber
\end{align}
The second equality in (\ref{p11}) holds by performing successive cancellations and using the relation $\langle \mathbf{u}-\mathbf{v}, \mathbf{r}-\mathbf{s} \rangle = \frac{1}{2} (\parallel \mathbf{u}-\mathbf{s}\parallel^2 - \parallel \mathbf{u}-\mathbf{r}\parallel^2 + \parallel \mathbf{v}-\mathbf{r}\parallel^2 - \parallel \mathbf{v}-\mathbf{s} \parallel^2)$ on $\Gamma_2$, $\forall \mathbf{u},\mathbf{v},\mathbf{r},\mathbf{s} \in \mathbb{R}^B$. The second inequality in (\ref{p11}) holds with
\begin{align}\label{p11+1}
& \sum_{m=1}^K  \Gamma_3   = \sum_{m=1}^K \| \frac{\beta\mathbf{A}\mathbf{x}(t_{m+1})}{\alpha(t_{m+1}-1)}- \frac{\beta \mathbf{A}\mathbf{x}^{\mathrm{d}}(t_{m})}{\alpha(t_{m+1}-1)} \|^2 \nonumber \\
& \leq \sum_{m=1}^K \frac{\beta^2 \parallel \mathbf{A} \parallel^2}{\alpha^2(t_{m+1}-1)} \parallel (\mathbf{x}(t_{m+1}) - \mathbf{x}(t_{m})) \nonumber \\
& \quad  + (\mathbf{x}(t_{m}) - \mathbf{x}^{\mathrm{d}}(t_m)) \parallel^2  \nonumber \\
& \leq \sum_{m=1}^K \frac{2 \beta^2 \parallel \mathbf{A} \parallel^2}{\alpha^2(t_{m+1}-1)} (\parallel \mathbf{x}(t_{m+1}) - \mathbf{x}(t_{m}) \parallel^2 \nonumber \\
& \quad +  \parallel \mathbf{x}(t_{m}) - \mathbf{x}^{\mathrm{d}}(t_m) \parallel^2)  \nonumber \\
& \leq  \sum_{i \in \mathcal{V}} \sum_{m=1}^K \frac{2 \beta^2 \parallel \mathbf{A} \parallel^2}{\alpha^2(t_{m+1}-1)} ( H \sum_{n=1}^{P_{i,m}}  \parallel \mathbf{x}_i(t^{(n+1)}_{m}) - \mathbf{x}_i(t^{(n)}_{m}) \parallel^2 \nonumber \\
 & \quad + D\sum_{n = 1}^{P_{i,m-1}} \parallel \mathbf{x}_i (t^{(n+1)}_{m-1}) -  \mathbf{x}_i(t^{(n)}_{m-1}) \parallel^2 ) \nonumber \\
 & \leq \sum_{i \in \mathcal{V}} \sum_{m=1}^K \frac{2 \beta^2 \parallel \mathbf{A} \parallel^2 H}{\alpha^2(t_{m+2}-1)}  \sum_{n=1}^{P_{i,m}}  \parallel \mathbf{x}_i(t^{(n+1)}_{m}) - \mathbf{x}_i(t^{(n)}_{m}) \parallel^2 \nonumber \\
 & + \sum_{i \in \mathcal{V}} \sum_{m=0}^{K-1} \frac{2 \beta^2 \parallel \mathbf{A} \parallel^2 D }{\alpha^2(t_{m+2}-1)} \sum_{n = 1}^{P_{i,m}} \parallel \mathbf{x}_i (t^{(n+1)}_{m})  -  \mathbf{x}_i(t^{(n)}_{m}) \parallel^2 \nonumber \\
 & \leq  \sum_{i \in \mathcal{V}} \sum_{m=1}^K \sum_{n = 1}^{P_{i,m}} \frac{2\beta^2 \parallel \mathbf{A} \parallel^2 (H + D) }{\alpha^2(t_{m+2}-1)} \nonumber \\
&  \quad \cdot \parallel \mathbf{x}_i (t^{(n+1)}_m) -  \mathbf{x}_i(t^{(n)}_m) \parallel^2 \nonumber \\
& +  \sum_{i \in \mathcal{V}} \sum_{n=1}^{P_{i,0}} \frac{2 \beta^2 \parallel \mathbf{A} \parallel^2D }{\alpha^2(t_{2}-1)} \parallel \mathbf{x}_i(t_{0}^{(n+1)}) - \mathbf{x}_i(t_{0}^{(n)}) \parallel^2,
\end{align}
where the third inequality holds with {Proposition \ref{la2}} and the forth inequality holds with $\alpha(t_{m+2}-1) <\alpha(t_{m+1}-1)$ (see {Proposition \ref{pr1}}).

The last inequality in (\ref{p11}) holds with
\begin{align}\label{}
& \frac{L_i}{2} - \frac{2-\theta_i(t^{(n)}_m)}{2\eta_i(t^{(n)}_m)} + \frac{\alpha_i(t^{(n)}_m)(H + D) \beta\parallel \mathbf{A} \parallel^2}{\alpha^2(t_{m+2}-1)} \nonumber \\
& < \frac{L_i}{2} - \frac{1}{2\eta_i(t^{(n)}_m)} + \frac{(H + D) \beta \Pi \parallel \mathbf{A} \parallel^2}{\alpha(t_{m+2}-1)} \leq 0,
\end{align}
where (\ref{17-5}), (\ref{th13}) and $\theta_i(t^{(n)}_m) \in (0,1)$ are considered.

Then, with $\Delta_1$ defined in (\ref{c1}), (\ref{p11}) can be rearranged as
\begin{align}\label{43}
&\frac{1}{\alpha(t_{K+1}-1)} ({F}(\mathbf{x}(t_{K+1})) - {F}(\mathbf{x}^*) + \langle \bm{\lambda}^*, \mathbf{A}\mathbf{x}(t_{K+1})\rangle) \nonumber \\
& + \frac{1}{2\beta} \parallel \bm{\lambda}(t_{K+1}) - \bm{\lambda}^* \parallel^2 + \frac{\Xi_{K+1}}{2} \parallel \mathbf{x}^*-\mathbf{x}(t_{K+1}) \parallel^2  \leq \Delta_1.
\end{align}
Hence, with the help of (\ref{sd1}), we have $0 \leq {F}(\mathbf{x}(t_{K+1})) - {F}(\mathbf{x}^*) + \langle \bm{\lambda}^*, \mathbf{A}\mathbf{x}(t_{K+1})\rangle \leq \alpha(t_{K+1}-1) \Delta_1$ and $\frac{1}{2\beta} \parallel \bm{\lambda}(t_{K+1}) - \bm{\lambda}^* \parallel^2 \leq \Delta_1$. Therefore, by the definition of $\bm{\lambda}(t_{K+1})$ in (\ref{64}), we have
\begin{align}\label{}
\frac{\beta}{ \alpha(t_{K+1}-1)} \parallel \mathbf{A}\mathbf{x}(t_{K+1})\parallel  \leq & \parallel \bm{\lambda}(t_{K+1}) - \bm{\lambda}^* \parallel + \parallel \bm{\lambda}^* \parallel \nonumber \\
 \leq & \sqrt{2\beta \Delta_1} +\parallel \bm{\lambda}^* \parallel,
\end{align}
which gives
\begin{align}\label{fh1}
\parallel \mathbf{A}\mathbf{x}(t_{K+1}) \parallel \leq \frac{\sqrt{2\beta \Delta_1} + \parallel \bm{\lambda}^* \parallel }{\beta}\alpha(t_{K+1}-1).
\end{align}

On the other hand,
\begin{align}\label{fh2}
& F(\mathbf{x} (t_{K+1}))  - F(\mathbf{x}^*) \leq  \Delta_1 \alpha(t_{K+1}-1) - \langle \bm{\lambda}^*, \mathbf{A} \mathbf{x}(t_{K+1})\rangle \nonumber \\
& \leq  \Delta_1 \alpha(t_{K+1}-1) + \parallel \bm{\lambda}^*\parallel \parallel \mathbf{A} \mathbf{x}(t_{K+1}) \parallel \nonumber \\
& \leq  ( \Delta_1 + \frac{\sqrt{2\beta \Delta_1} + \parallel \bm{\lambda}^* \parallel }{\beta} \parallel \bm{\lambda}^* \parallel ) \alpha(t_{K+1}-1),
\end{align}
and
\begin{align}\label{fh3}
& F(\mathbf{x} (t_{K+1}))  - F(\mathbf{x}^*) \geq  - \parallel \bm{\lambda}^* \parallel \parallel \mathbf{A} \mathbf{x}(t_{K+1}) \parallel \nonumber \\
& \geq  -  \frac{\sqrt{2\beta \Delta_1} + \parallel \bm{\lambda}^* \parallel }{\beta}\parallel \bm{\lambda}^* \parallel\alpha (t_{K+1}-1).
\end{align}
By combining (\ref{fh1}), (\ref{fh2}) and (\ref{fh3}), the proof is completed.

\subsection{Proof of Lemma \ref{l5}}\label{l5p}

Note that by (\ref{th12+1}), (\ref{1}) and (\ref{lp1}), $\{\alpha_i(t_{m}-1) \}_{m \in \mathbb{N}_+}$ is synchronized, i.e, (\ref{tha1}) holds. Then, (\ref{llp}) can be proved with
\begin{align}\label{llp2}
& \frac{\eta_i(t_{m}-1)}{\eta_j(t_{m}-1)} = \frac{\eta_i(t^{(P_{i,m-1})}_{m-1})}{\eta_j(t^{(P_{j,m-1})}_{m-1})} \nonumber \\
& =  \frac{P _{j,m-1} ( Q_{m-1} + \frac{2(H+D)\beta \Pi \parallel \mathbf{A} \parallel^2}{\alpha_j(t_{m+1}-1)} )}{P _{i,m-1} ( Q_{m-1} + \frac{2(H+D)\beta \Pi \parallel \mathbf{A} \parallel^2}{\alpha_i(t_{m+1}-1)} )}  \nonumber \\
& =  \frac{P_{j,m-1}}{P_{i,m-1}},
\end{align}
where {Proposition \ref{pp1}}, (\ref{cc}), and (\ref{16}) are used.

(\ref{th13}) can be directly verified by (\ref{16}) with $P_{i,m} \geq 1$, $Q_m \geq L^{\mathrm{g}} \geq L_i$, and (\ref{cc}).

To prove (\ref{151}), by (\ref{cc}), (\ref{lp1}) and (\ref{16}), we have
\begin{align}\label{73}
& \frac{\theta_i(t^{(n)}_m)}{\alpha_i(t^{(n)}_m)\eta_i(t^{(n)}_m)} = \frac{1}{P_{i,m}\eta_i(t^{(n)}_m)}  \nonumber \\
 &  =  Q_{m} + \frac{2(H+D)\beta \Pi \parallel \mathbf{A} \parallel^2}{\alpha(t_{m+2}-1)}.
\end{align}
Hence, if $n=2,3,...,P_{i,m}$, (\ref{151}) holds with the left-hand side being 0. If $n=1$, then
\begin{align}\label{45}
 & \frac{\theta_i(t^{(1)}_m)}{\eta_i(t^{(1)}_m)\alpha_i(t^{(1)}_m)} -  \frac{\theta_i(t^{(0)}_m)}{\eta_i(t^{(0)}_m)\alpha_i(t^{(0)}_m)} \nonumber \\
 & =  \frac{\theta_i(t^{(1)}_m)}{\eta_i(t^{(1)}_m) \alpha_i(t^{(1)}_m)} -  \frac{\theta_i(t^{(P_{i,m-1})}_{m-1})}{\eta_i(t^{(P_{i,m-1})}_{m-1}) \alpha_i(t^{(P_{i,m-1})}_{m-1})} \nonumber \\
 & =  Q_{m} - Q_{m-1} + 2(H+D)\beta \Pi \parallel \mathbf{A} \parallel^2 \nonumber \\
  & \quad \cdot ( \frac{1}{\alpha(t_{m+2}-1)}  -  \frac{1}{\alpha(t_{m+1}-1)} )  \nonumber \\
& =  Q_{m} - Q_{m-1} + 2(H+D)\beta \Pi \parallel \mathbf{A} \parallel^2,
\end{align}
where the last two equalities use formulas (\ref{73}) and (\ref{17-1}), respectively. Therefore, (\ref{151}) holds.

\subsection{Proof of Theorem \ref{la1}}\label{tp2}
With a given $\epsilon>0$, we let $\alpha(t_{K+1}-1) \leq \epsilon$, which means $\frac{\alpha(t_{1}-1)}{K\alpha(t_{1}-1)+1}  \leq \epsilon$ (by (\ref{17-1})).
By solving $K$, we have (\ref{27}).

To prove the convergence, one still needs to satisfy condition (\ref{th14+1}). In slot $m$, by (\ref{lp1}), (\ref{th14+1}) can be rewritten as
\begin{align}\label{17}
  & \frac{\theta_i(t^{(n)}_m)}{\eta_i(t^{(n)}_m) \alpha_i(t^{(n)}_m)} - \frac{\theta_i(t^{(n-1)}_{m})}{\eta_i(t^{(n-1)}_{m}) \alpha_i(t^{(n-1)}_{m})} \nonumber \\
   &  \leq \frac{ \theta_i(t^{(n)}_m)\mu_i}{\alpha_i(t^{(n)}_m)} = \frac{ \mu_i}{P_{i,m}}.
\end{align}
Considering $P_{i,m} \leq H$ and $\mu \leq \mu_i$, a more conservative condition of (\ref{17}) can be obtained as
\begin{equation}\label{17+1}
  \frac{\theta_i(t^{(n)}_m)}{\eta_i(t^{(n)}_m) \alpha_i(t^{(n)}_m)} - \frac{\theta_i(t^{(n-1)}_{m})}{\eta_i(t^{(n-1)}_{m}) \alpha_i(t^{(n-1)}_{m})} \leq \frac{\mu}{H}.
\end{equation}
To ensure (\ref{17+1}) to hold, we combine (\ref{151}) and (\ref{17+1}), which gives (considering $\frac{\mu}{H} >0$ unconditionally)
\begin{align}\label{cd}
Q_m - & Q_{m-1} + 2(H+D)\beta \Pi \parallel \mathbf{A} \parallel^2 \leq \frac{ \mu}{H}.
\end{align}
Solving the requirement for $Q_m$ and $\beta$ in (\ref{cd}) gives (\ref{35}) and (\ref{32-1}).

By now, all the conditions in Theorem \ref{th1} are satisfied by those in Theorem 2. By recalling $\alpha(t_{K+1}-1) \leq \epsilon$, (\ref{f2}) and (\ref{f3}) can be written into (\ref{22+1}) and (\ref{22+2}), respectively. In addition, as seen from (\ref{17-1}), $\alpha(t_{K+1}-1)$ is with an order of $\mathcal{O}(\frac{1}{K})$. Hence, the results (\ref{f2}) and (\ref{f3}) can be further written into (\ref{23}) and (\ref{24}), respectively. This completes the proof.

\small

\bibliographystyle{IEEEtran}

\bibliography{1myref}

\end{document}